\providecommand{\U}[1]{\protect\rule{.1in}{.1in}}
\newtheorem{theorem}{Theorem}
\newtheorem{corollary}[theorem]{Corollary}
\newtheorem{definition}[theorem]{Definition}
\newtheorem{example}[theorem]{Example}
\newtheorem{lemma}[theorem]{Lemma}
\newtheorem{notation}[theorem]{Notation}
\newtheorem{proposition}[theorem]{Proposition}
\newtheorem{remark}[theorem]{Remark}
\newenvironment{proof}[1][Proof]{\noindent\textbf{#1.} }{\ \rule{0.5em}{0.5em}}
\numberwithin{equation}{section}
\numberwithin{theorem}{section}
\newenvironment{sergiorev}{(\color{blue}}{\color{black})}
\newcommand{\bsr}{\begin{sergiorev}}
\newcommand{\esr}{\end{sergiorev}}
\begin{document}

\title{Fundamental solutions for Kolmogorov-Fokker-Planck operators with
time-depending measurable coefficients}
\author{Marco Bramanti,\ Sergio Polidoro}
\maketitle

\begin{abstract}
We consider a Kolmogorov-Fokker-Planck operator of the kind:%
\[
\mathcal{L}u=\sum_{i,j=1}^{q}a_{ij}\left(  t\right)  \partial_{x_{i}x_{j}}%
^{2}u+\sum_{k,j=1}^{N}b_{jk}x_{k}\partial_{x_{j}}u-\partial_{t}u,\qquad
(x,t)\in\mathbb{R}^{N+1}%
\]
where $\left\{  a_{ij}\left(  t\right)  \right\}  _{i,j=1}^{q}$ is a symmetric
uniformly positive matrix on $\mathbb{R}^{q}$, $q\leq N$, of bounded
measurable coefficients defined for $t\in\mathbb{R}$ and the matrix
$B=\left\{  b_{ij}\right\}  _{i,j=1}^{N}$ satisfies the assumptions made by
Lanconelli-Polidoro in \cite{LP}, which make the corresponding operator with
constant $a_{ij}$ hypoelliptic. We construct an explicit fundamental solution
$\Gamma$ for $\mathcal{L}$, study its property, show a comparison result
between $\Gamma$ and the fundamental solution of some model operators with
constant $a_{ij}$, and show the unique solvability of the Cauchy problem for
$\mathcal{L}$ under various assumptions on the initial datum.

\end{abstract}

\section{Introduction}

We consider a Kolmogorov-Fokker-Planck (from now on KFP) operator of the kind:%
\begin{equation}
\mathcal{L}u=\sum_{i,j=1}^{q}a_{ij}\left(  t\right)  \partial_{x_{i}x_{j}}%
^{2}u+\sum_{k,j=1}^{N}b_{jk}x_{k}\partial_{x_{j}}u-\partial_{t}u,\qquad
(x,t)\in\mathbb{R}^{N+1} \label{L}%
\end{equation}
where:

(H1) $A_{0}\left(  t\right)  =\left\{  a_{ij}\left(  t\right)  \right\}
_{i,j=1}^{q}$ is a symmetric uniformly positive matrix on $\mathbb{R}^{q}$,
$q\leq N$, of bounded measurable coefficients defined for $t\in\mathbb{R}$, so
that
\begin{equation}
\nu\left\vert \xi\right\vert ^{2}\leq\sum_{i,j=1}^{q}a_{ij}\left(  t\right)
\xi_{i}\xi_{j}\leq\nu^{-1}\left\vert \xi\right\vert ^{2} \label{nu}%
\end{equation}
for some constant $\nu>0$, every $\xi\in\mathbb{R}^{q}$, a.e. $t\in\mathbb{R}$.

Lanconelli-Polidoro in \cite{LP} have studied the operators (\ref{L}) with
constant $a_{ij}$, proving that they are hypoelliptic if and only if the
matrix $B=\left\{  b_{ij}\right\}  _{i,j=1}^{N}$ satisfies the following
condition. There exists a basis of $\mathbb{R}^{N}$ such that $B$ assumes the
following form:

(H2) For $m_{0}=q$ and suitable positive integers $m_{1},\dots,m_{\kappa}$
such that
\begin{equation}
m_{0}\geq m_{1}\geq\ldots\geq m_{\kappa}\geq1,\quad\mathrm{and}\quad
m_{0}+m_{1}+\ldots+m_{\kappa}=N, \label{m-cond}%
\end{equation}
we have%
\begin{equation}
B=%
\begin{bmatrix}
\ast & \ast & \ldots & \ast & \ast\\
B_{1} & \ast & \ldots & \ast & \ast\\
\mathbb{O} & B_{2} & \ldots & \ast & \ast\\
\vdots & \vdots & \ddots & \vdots & \vdots\\
\mathbb{O} & \mathbb{O} & \ldots & B_{\kappa} & \ast
\end{bmatrix}
\label{B}%
\end{equation}
where every block $B_{j}$ is a $m_{j}\times m_{j-1}$ matrix of rank $m_{j}$
with $j=1,2,\ldots,\kappa$, while the entries of the blocks denoted by $\ast$
are arbitrary.

It is also proved in \cite{LP} that the operator $\mathcal{L}$ (corresponding
to constant $a_{ij}$) is left invariant with respect to a suitable
(noncommutative) Lie group of translations in $\mathbb{R}^{N}$. If, in
addition, all the blocks $\ast$ in (\ref{B}) vanish, then $\mathcal{L}$ is
also $2$-homogeneous with respect to a family of dilations. In this very
special case, the operator $\mathcal{L}$ fits into the rich theory of left
invariant, $2$-homogeneus, H\"{o}rmander operators on homoegeneous groups.

Coming back to the family of hypoelliptic and left invariant operators with
constant $a_{ij}$ (and possibly nonzero blocks $\ast$ in (\ref{B})), an
explicit fundamental solution is known, after \cite{K1} and \cite{LP}.

\bigskip

A first result of this paper consists in showing that if, under the same
structural assumptions considered in \cite{LP}, the coefficients $a_{ij}$ are
allowed to depend on $t$, even just in an $L^{\infty}$-way, then an explicit
fundamental solution $\Gamma$ can still be costructed. It is worth noting
that, under our assumptions (H1)-(H2), $\mathcal{L}$ is hypoelliptic if and
only if the coefficients $a_{i,j}$'s are $C^{\infty}$ functions, which also
means that $\Gamma$ is smooth outside the pole. In our more general context,
$\Gamma$ will be smooth in $x$ and only locally Lipschitz continuous in $t$,
outside the pole. Our fundamental solution also allows to solve a Cauchy
problem for $\mathcal{L}$ under various assumptions on the initial datum, and
to prove its uniqueness. Moreover, we show that the fundamental solution of
$\mathcal{L}$ satisfies two-sided bounds in terms of the fundamental solutions
of model operators of the kind:%
\begin{equation}
\mathcal{L}_{\alpha}u=\alpha\sum_{i=1}^{q}\partial_{x_{i}x_{i}}^{2}%
u+\sum_{k,j=1}^{N}b_{jk}x_{k}\partial_{x_{j}}u-\partial_{t}u, \label{L-alpha}%
\end{equation}
whose explicit expression is more easily handled. This fact has other
interesting consequences when combined with the results of \cite{LP}, which
allow to compare the fundamental solution of (\ref{L-alpha}) with that of the
corresponding \textquotedblleft principal part operator\textquotedblright,
which is obtained from (\ref{L-alpha}) by annihilating all the blocks $\ast$
in (\ref{B}). The fundamental solution of the latter operator has an even
simpler explicit form, since it possesses both translation invariance and homogeneity.

\bigskip

To put our results into context, let us now make some historical remarks.
Already in 1934, Kolmogorov in \cite{Ko} exhibited an explicit fundamental
solution, smooth outside the pole, for the ultraparabolic operator%
\[
\partial_{xx}^{2}+x\partial_{y}-\partial_{t}\text{ in }\mathbb{R}^{3}.
\]
For more general classes of ultraparabolic KFP operators, Weber \cite{We},
1951, Il'in \cite{il}, 1964, Sonin \cite{So}, 1967, proved the existence of a
fundamental solution smooth outside the pole, by the Levi method, starting
with an approximate fundamental solution which was inspired by the one found
by Kolmogorov. H\"{o}rmander, in the introduction of \cite{Hor2}, 1967,
sketches a procedure to compute explicitly (by Fourier transform and the
method of characteristics) a fundamental solution for a class of KFP operators
of type (\ref{L}) (with constant $a_{ij}$). In all the aforementioned papers
the focus is to prove that the operator, despite of its degenerate character,
is hypoelliptic. This is accomplished by showing the existence of a
fundamental solution smooth outside the pole, without explicitly computing it.

Kupcov in \cite{K1}, 1972, computes the fundamental solution for a class of
KFP operators of the kind (\ref{L}) (with constant $a_{ij}$). This procedure
is generalized by the same author in \cite{K2}, 1982, to a class of operators
(\ref{L}) with time-dependent coefficients $a_{ij}$, which however are assumed
of class $C^{\kappa}$ for some positive integer $\kappa$ related to the
structure of the matrix $B$. Our procedure to compute the fundamental solution
follows the technique by H\"{o}rmander (different from that of Kupcov) and
works also for nonsmooth $a_{ij}\left(  t\right)  $.

Based on the explicit expression of the fundamental solution, existence,
uniqueness and regularity issues for the Cauchy problem have been studied in
the framework of the semigroup setting. We refer here to the article by
Lunardi \cite{Lu}, and to Farkas and Lorenzi \cite{FL}. The parametrix method
introduced in \cite{We,il,So} was used by Polidoro in \cite{Po} and by Di
Francesco and Pascucci in \cite{DiFraPasc} for more general families of
Kolmogorov equations with H\"{o}lder continuous coefficients. We also refer to
the article \cite{DelarMen} by Delaure and Menozzi, where a Lipschitz
continuous drift term is considered in the framework of the stochastic theory.
For a recent survey on the theory of KFP operators we refer to the paper
\cite{AP} by Anceschi-Polidoro, while a discussion on several motivations to
study this class of operators can be found for instance in the survey book
\cite[\S 2.1]{BSur}.

\bigskip

The interest in studying KFP operators with a possibly rough time-depen\-dence
of the coefficients comes from the theory of stochastic processes. Indeed, let
$\sigma=\sigma(t)$ be a $N\times q$ matrix, with zero entries under the $q$-th
row, let $B$ as in (\ref{B}), and let $(W_{t})_{t\geq t_{0}}$ be a
$q$-dimensional Wiener process. Denote by $(X_{t})_{t\geq t_{0}}$ the solution
to the following $N$-dimensional stochastic differential equation
\begin{equation}%
\begin{cases}
dX_{t}=-BX_{t}\,dt+\sigma(t)\,dW_{t}\\
X_{t_{0}}=x_{0}.
\end{cases}
\label{stoc}%
\end{equation}
Then the \emph{forward Kolmogorov operator} $\mathcal{K}_{f}$ of
$(X_{t})_{t\geq t_{0}}$ agrees with $\mathcal{L}$ up to a constant zero order
term:
\[
\mathcal{K}_{f}v(x,t)=\mathcal{L}v(x,t)+\operatorname*{tr}(B)v(x,t),
\]
where
\begin{equation}
a_{ij}\left(  t\right)  =\tfrac{1}{2}\sum_{k=1}^{q}\sigma_{ik}(t)\sigma
_{jk}(t)\text{ \ \ }i,j=1,...,q. \label{a-def}%
\end{equation}
Moreover, the \emph{backward Kolmogorov operator} $\mathcal{K}_{b}$ of
$(X_{t})_{t\geq t_{0}}$ acts as follows
\[
\mathcal{K}_{b}u(y,s)=\partial_{s}u(y,s)+\sum\limits_{i,j=1}^{q}%
a_{ij}(s)\partial_{y_{i}y_{j}}^{2}u(y,s)-\sum\limits_{i,j=1}^{N}b_{ij}%
y_{j}\partial_{y_{i}}u(y,s).
\]
Note that $\mathcal{K}_{f}$ is the transposed operator of $\mathcal{K}_{b}$.
In general, given a differential operator $\mathcal{K}$, its transposed
operator $\mathcal{K}^{\ast}$ is the one which satisfies the relation%
\[
\int_{\mathbb{R}^{N+1}}\phi\left(  x,t\right)  \mathcal{K}^{\ast}\psi\left(
x,t\right)  dxdt=\int_{\mathbb{R}^{N+1}}\mathcal{K}\phi\left(  x,t\right)
\psi\left(  x,t\right)  dxdt
\]
for every $\phi,\psi\in C_{0}^{\infty}\left(  \mathbb{R}^{N+1}\right)  .$

A further motivation for our study is the following one. A regularity theory
for the operator $\mathcal{L}$ with H\"{o}lder continuous coefficients has
been developed by several authors (see e.g. \cite{Ma}, \cite{Lu},
\cite{DiFraPo}). However, as Pascucci and Pesce show in the Example 1.3 of
\cite{PaPe}, the requirement of H\"{o}lder continuity in $\left(  x,t\right)
$ with respect to the control distance may be very restrictive, due to the
interaction of time and space variable in the drift term of $\mathcal{L}$. In
view of this, a regularity requirement with respect to $x$-variables alone,
for $t$ fixed, with a possible rough dependence on $t$, seems a more natural
assumption. This paper can be seen as a first step to study KFP operators with
coefficients measurable in time and H\"{o}lder continuous or VMO in space, to
overcome the objection pointed out in \cite{PaPe}. For these operators the
fundamental solution of (\ref{L}) could be used as a parametrix, as done in
\cite{PaPe2}, to build a fundamental solution.

\begin{notation}
Throughout the paper we will regard vectors $x\in\mathbb{R}^{N}$ as columns,
and, we will write $x^{T},M^{T}$ to denote the transpose of a vector $x$ or a
matrix $M$. We also define the (symmetric, nonnegative) $N\times N$ matrix%
\begin{equation}
A\left(  t\right)  =%
\begin{bmatrix}
A_{0}\left(  t\right)  & \mathbb{O}\\
\mathbb{O} & \mathbb{O}%
\end{bmatrix}
. \label{A(t)}%
\end{equation}

\end{notation}

Before stating our results, let us fix precise definitions of solution to the
equation $\mathcal{L}u=0$ and to a Cauchy problem for $\mathcal{L}$.

\begin{definition}
\label{Def solution}We say that $u\left(  x,t\right)  $ is a solution to the
equation $\mathcal{L}u=0$ in $\mathbb{R}^{N}\times I$, for some open interval
$I$, if:

$u$ is jointly continuous in $\mathbb{R}^{N}\times I$;

for every $t\in I,$ $u\left(  \cdot,t\right)  \in C^{2}\left(  \mathbb{R}%
^{N}\right)  $;

for every $x\in\mathbb{R}^{N}$, $u\left(  x,\cdot\right)  $ is absolutely
continuous on $I$, and $\frac{\partial u}{\partial t}$ (defined for a.e. $t$)
is essentially bounded for $t$ ranging in every compact subinterval of $I$;

for a.e. $t\in I$ and every $x\in\mathbb{R}^{N}$, $\mathcal{L}u\left(
x,t\right)  =0$.
\end{definition}

\begin{definition}
\label{Def Cauchy}We say that $u\left(  x,t\right)  $ is a solution to the
Cauchy problem%
\begin{equation}
\left\{
\begin{array}
[c]{l}%
\mathcal{L}u=0\text{ in }\mathbb{R}^{N}\times\left(  t_{0},T\right) \\
u\left(  \cdot,t_{0}\right)  =f
\end{array}
\right.  \label{PdC}%
\end{equation}
for some $T\in(-\infty,+\infty]$, $t_{0}\in\left(  -\infty,T\right)  $, where
$f$ is continuous in $\mathbb{R}^{N}$ or belongs to $L^{p}\left(
\mathbb{R}^{N}\right)  $ for some $p\in\lbrack1,\infty)$ if:

(a) $u$ is a solution to the equation $\mathcal{L}u=0$ in $\mathbb{R}%
^{N}\times\left(  t_{0},T\right)  $ (in the sense of the above definition);

(b$_{1}$) if $f\in C^{0}\left(  \mathbb{R}^{N}\right)  $ then $u\left(
x,t\right)  \rightarrow f\left(  x_{0}\right)  $ as $\left(  x,t\right)
\rightarrow\left(  x_{0},t_{0}^{+}\right)  $,$\ $for every $x_{0}\in
\mathbb{R}^{N}$;

(b$_{2}$) if $f\in L^{p}\left(  \mathbb{R}^{N}\right)  $ for some $p\in
\lbrack1,\infty)$ then $u\left(  \cdot,t\right)  \in L^{p}\left(
\mathbb{R}^{N}\right)  $ for every $t\in\left(  t_{0},T\right)  $, and
$\left\Vert u\left(  \cdot,t\right)  -f\right\Vert _{L^{p}\left(
\mathbb{R}^{N}\right)  }\rightarrow0$ as $t\rightarrow t_{0}^{+}.$
\end{definition}

In the following, we will also need the \emph{transposed operator }of
$\mathcal{L}$, defined by
\begin{equation}
\mathcal{L}^{\ast}u=\sum_{i,j=1}^{q}a_{ij}\left(  s\right)  \partial
_{y_{i}y_{j}}^{2}u-\sum_{k,j=1}^{N}b_{jk}y_{k}\partial_{y_{j}}%
u-u\operatorname*{Tr}B+\partial_{s}u. \label{L star}%
\end{equation}
The definition of solution to the equation $\mathcal{L}^{\ast}u=0$ is
perfectly analogous to Definition \ref{Def solution}.

We can now state precisely the main results of the paper.

\begin{theorem}
\label{Thm main}Under the assumptions (H1)-(H2) above, denote by $E(s)$ and
$C(t,t_{0})$ the following $N\times N$ matrices
\begin{equation}
E\left(  s\right)  =\exp\left(  -sB\right)  ,\qquad C\left(  t,t_{0}\right)
=\int_{t_{0}}^{t}E\left(  t-\sigma\right)  A\left(  \sigma\right)  E\left(
t-\sigma\right)  ^{T}d\sigma\label{eq-EC}%
\end{equation}
for $s,t,t_{0}\in\mathbb{R}$ and $t>t_{0}$. Then the matrix $C\left(
t,t_{0}\right)  $ is symmetric and positive for every $t>t_{0}$. Let%
\begin{align}
&  \Gamma\left(  x,t;x_{0},t_{0}\right) \nonumber\\
&  =\frac{1}{\left(  4\pi\right)  ^{N/2}\sqrt{\det C\left(  t,t_{0}\right)  }%
}e^{-\left(  \frac{1}{4}\left(  x-E\left(  t-t_{0}\right)  x_{0}\right)
^{T}C\left(  t,t_{0}\right)  ^{-1}\left(  x-E\left(  t-t_{0}\right)
x_{0}\right)  +\left(  t-t_{0}\right)  \operatorname*{Tr}B\right)  }
\label{Gamma}%
\end{align}
for $t>t_{0}$, $\Gamma=0$ for $t\leq t_{0}$. Then $\Gamma$ has the following
properties (so that $\Gamma$ is a fundamental solution for $\mathcal{L}$ with
pole $\left(  x_{0},t_{0}\right)  $).

(i) In the region%
\begin{equation}
\mathbb{R}_{\ast}^{2N+2}=\left\{  \left(  x,t,x_{0},t_{0}\right)
\in\mathbb{R}^{2N+2}:\left(  x,t\right)  \neq\left(  x_{0},t_{0}\right)
\right\}  \label{R star}%
\end{equation}
the function $\Gamma$ is jointly continuous in $\left(  x,t,x_{0}%
,t_{0}\right)  $ and is $C^{\infty}$ with respect to $x,x_{0}$. The functions
$\frac{\partial^{\alpha+\beta}\Gamma}{\partial x^{\alpha}\partial x_{0}%
^{\beta}}$ (for every multiindices $\alpha,\beta$) are jointly continuous in
$\left(  x,t,x_{0},t_{0}\right)  \in\mathbb{R}_{\ast}^{2N+2}$. Moreover
$\Gamma$ and and $\frac{\partial^{\alpha+\beta}\Gamma}{\partial x^{\alpha
}\partial x_{0}^{\beta}}$ are Lipschitz continuous with respect to $t$ and
with respect to $t_{0}$ in any region $H\leq t_{0}+\delta\leq t\leq K$ for
fixed $H,K\in\mathbb{R}$ and $\delta>0$.

$\lim_{\left\vert x\right\vert \rightarrow+\infty}\Gamma\left(  x,t;x_{0}%
,t_{0}\right)  =0$ for every $t>t_{0}$ and every $x_{0}\in\mathbb{R}^{N}$.

$\lim_{\left\vert x_{0}\right\vert \rightarrow+\infty}\Gamma\left(
x,t;x_{0},t_{0}\right)  =0$ for every $t>t_{0}$ and every $x\in\mathbb{R}^{N}$.

(ii) For every fixed $\left(  x_{0},t_{0}\right)  \in\mathbb{R}^{N+1}$, the
function $\Gamma\left(  \cdot,\cdot;x_{0},t_{0}\right)  $ is a solution to
$\mathcal{L}u=0$ in $\mathbb{R}^{N}\times\left(  t_{0},+\infty\right)  $ (in
the sense of Definition \ref{Def solution});

(iii) For every fixed $\left(  x,t\right)  \in\mathbb{R}^{N+1}$, the function
$\Gamma\left(  x,t;\cdot,\cdot\right)  $ is a solution to $\mathcal{L}^{\ast
}u=0$ in $\mathbb{R}^{N}\times\left(  -\infty,t\right)  $;

(iv) Let $f\in C_{b}^{0}\left(  \mathbb{R}^{N}\right)  $ (bounded continuous),
or $f\in L^{p}\left(  \mathbb{R}^{N}\right)  $ for some $p\in\lbrack1,\infty
)$.\ Then there exists one and only one solution to the Cauchy problem
(\ref{PdC}) (in the sense of Definition \ref{Def Cauchy}, with $T=\infty$)
such that $u\in C_{b}^{0}\left(  \mathbb{R}^{N}\times\lbrack t_{0}%
,\infty\right)  $ or $u\left(  t,\cdot\right)  \in L^{p}\left(  \mathbb{R}%
^{N}\right)  $ for every $t>t_{0}$, respectively. The solution is given by%
\begin{equation}
u\left(  x,t\right)  =\int_{\mathbb{R}^{N}}\Gamma\left(  x,t;y,t_{0}\right)
f\left(  y\right)  dy \label{repr formula}%
\end{equation}
and is $C^{\infty}\left(  \mathbb{R}^{N}\right)  $ with respect to $x$ for
every fixed $t>t_{0}$. If moreover $f$ is continuous and vanishes at infinity,
then $u\left(  \cdot,t\right)  \rightarrow f$ uniformly in $\mathbb{R}^{N}$ as
$t\rightarrow t_{0}^{+}$.

(v) Let $f$ be a (possibly unbounded) continuous function on $\mathbb{R}^{N}$
satisfying the condition
\begin{equation}
\int_{\mathbb{R}^{N}}\left\vert f\left(  x\right)  \right\vert e^{-\alpha
\left\vert x\right\vert ^{2}}dx<\infty, \label{exp bound}%
\end{equation}
for some $\alpha>0$. Then there exists $T>0$ such that there exists one and
only one solution $u$ to the Cauchy problem (\ref{PdC}) satisfying condition
\begin{equation}
\int\limits_{t_{0}}^{T}\int\limits_{\mathbb{R}^{N}}\left\vert
u(x,t)\right\vert \,e^{-C|x|^{2}}\,dx\,dt<+\infty\label{cond uniqueness}%
\end{equation}
for some $C>0$. The solution $u\left(  x,t\right)  $ is given by
(\ref{repr formula}) for $t\in(t_{0},T)$. It is $C^{\infty}\left(
\mathbb{R}^{N}\right)  $ with respect to $x$ for every fixed $t\in\left(
t_{0},T\right)  $.

(vi) $\Gamma$ satisfies for every $x_{0}\in\mathbb{R}^{N}$, $t_{0}<t$ the
integral identities
\begin{align*}
\int_{\mathbb{R}^{N}}\Gamma\left(  x_{0},t;y,t_{0}\right)  dy  &  =1\\
\int_{\mathbb{R}^{N}}\Gamma\left(  x,t;x_{0},t_{0}\right)  dx  &  =e^{-\left(
t-t_{0}\right)  \operatorname*{Tr}B}.
\end{align*}

(vii) $\Gamma$ satisfies the reproduction formula%
\[
\Gamma\left(  x,t;y,s\right)  =\int_{\mathbb{R}^{N}}\Gamma\left(
x,t;z,\tau\right)  \Gamma\left(  z,\tau;y,s\right)  dz
\]
for every $x,y\in\mathbb{R}^{N}$ and $s<\tau<t$.
\end{theorem}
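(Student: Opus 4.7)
The starting point is to construct $\Gamma$ by the Fourier-transform strategy sketched by H\"ormander in \cite{Hor2}. Taking the Fourier transform in $x$ in the equation $\mathcal{L}u=0$ with initial datum $\delta_{x_{0}}$ reduces the problem to the first-order linear PDE
\[
\partial_{t}\hat{u}+(B^{T}\xi)\cdot\nabla_{\xi}\hat{u}+\bigl(\langle A(t)\xi,\xi\rangle+\operatorname{Tr}B\bigr)\hat{u}=0,\qquad\hat{u}(\xi,t_{0})=e^{-ix_{0}\cdot\xi},
\]
which, along the characteristics $\xi(s)=e^{(s-t)B^{T}}\xi$, becomes a scalar ODE. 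Integrating and recognizing that $\int_{t_{0}}^{t}\langle A(\sigma)E(t-\sigma)^{T}\xi,E(t-\sigma)^{T}\xi\rangle\,d\sigma=\langle C(t,t_{0})\xi,\xi\rangle$, one arrives at $\hat{\Gamma}(\xi,t;x_{0},t_{0})=e^{-i(E(t-t_{0})x_{0})\cdot\xi}\,e^{-\langle C(t,t_{0})\xi,\xi\rangle}\,e^{-(t-t_{0})\operatorname{Tr}B}$; inverting the Fourier transform yields formula (\ref{Gamma}).

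For this computation and the subsequent smoothness claims, the symmetry and positive definiteness of $C(t,t_{0})$ must be established first. Symmetry is immediate; for positivity, using (H1) the quadratic form $\langle C(t,t_{0})\xi,\xi\rangle$ equals the integral of $\langle A_{0}(\sigma)\eta_{q}(\sigma),\eta_{q}(\sigma)\rangle$, where $\eta_{q}(\sigma)$ denotes the first $q$-block of $E(t-\sigma)^{T}\xi$. If this vanishes, then $\eta_{q}(\sigma)\equiv 0$ on $(t_{0},t)$, and iterated differentiation at $\sigma=t$ together with the block structure (H2), exactly as in the proof of the Kalman rank condition in \cite{LP}, forces $\xi=0$.

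With $\Gamma$ in hand, parts (i), (ii), (iii), and (vi) are obtained by direct computation. Smoothness of $\Gamma$ in $x$ and $x_{0}$ is evident from the Gaussian formula; Lipschitz continuity in $t,t_{0}$ on the region $t\ge t_{0}+\delta$ follows from the Lipschitz dependence of $C(t,t_{0})$ and $E(t-t_{0})$ on their arguments, combined with the local boundedness of $C^{-1}$ guaranteed by the previous step. The identity $\mathcal{L}\Gamma=0$ is verified either directly on the Fourier side or via the Lyapunov-type ODE $\partial_{t}C(t,t_{0})=A(t)-BC(t,t_{0})-C(t,t_{0})B^{T}$; the analogous identity $\mathcal{L}^{\ast}\Gamma=0$ in the variables $(y,s)$ is then read off from the same explicit formula, the extra zero-order term $-u\operatorname{Tr}B$ in (\ref{L star}) matching the derivative of the factor $e^{-(t-s)\operatorname{Tr}B}$ in $s$. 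For (vi), the substitutions $z=E(t-t_{0})y$ (with Jacobian $e^{-(t-t_{0})\operatorname{Tr}B}$) and $w=x-E(t-t_{0})x_{0}$ reduce both integrals to standard Gaussian normalization.

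Existence in (iv) follows by differentiating (\ref{repr formula}) under the integral sign, justified by the Gaussian decay of $\Gamma$ in $y$. Uniqueness, the main technical step, reduces to showing that a solution with zero datum and the required global bound must vanish; this is done by testing $u(\cdot,s)$ against $\Gamma(x,t;\cdot,s)$ and integrating by parts in $s\in(t_{0},t)$ using (iii), exploiting the boundedness or $L^{p}$ bound to kill boundary terms. The reproduction formula (vii) then follows immediately from (iv), applied with the bounded continuous $L^{1}$ datum $\Gamma(\cdot,\tau;y,s)$. Part (v) is a Tychonoff-type result requiring the more delicate two-sided Gaussian bounds established elsewhere in the paper through comparison with the model $\mathcal{L}_{\alpha}$: these show that $\Gamma(x,t;y,t_{0})$ decays in $y$ like $\exp(-c|y|^{2}/(t-t_{0}))$, and for $T-t_{0}$ small enough (depending on $\alpha$ in (\ref{exp bound}) and $C$ in (\ref{cond uniqueness})) this beats the admissible growth of both $f$ and $u$, so the same duality argument applies. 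The main obstacle I foresee is precisely this quantitative control in (v): one has to track the sharp constants in the Gaussian estimates for $\Gamma$ as $t\downarrow t_{0}$ in order to identify the threshold time $T$ on which uniqueness in the Tychonoff class can be guaranteed.
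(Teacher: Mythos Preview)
Your plan is correct and follows the same overall architecture as the paper: Fourier/characteristics to derive (\ref{Gamma}), direct verification of (ii)--(iii) and (vi), Green-identity duality (testing against $\Gamma(x,t;\cdot,\cdot)$, with a spatial cutoff) for uniqueness, and (vii) as a corollary of uniqueness. Two points of comparison are worth recording. First, for the positivity of $C(t,t_{0})$ you redo the Kalman-type argument from \cite{LP} directly; the paper instead sandwiches $C(t,t_{0})$ between $\nu C_{0}(t-t_{0})$ and $\nu^{-1}C_{0}(t-t_{0})$ (Proposition~\ref{Prop fq C C0}) and invokes the known positivity of $C_{0}$. The sandwich is what the paper reuses later for the Gaussian comparison (Theorem~\ref{Thm comparison Gammas}) feeding into (v), so it is not just an alternative but a structurally central step. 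Second, for $\mathcal{L}\Gamma=0$ you propose using the Lyapunov ODE $\partial_{t}C=A-BC-CB^{T}$; combined with Jacobi's formula this yields $\partial_{t}\log\det C=\operatorname{Tr}(C^{-1}A)-2\operatorname{Tr}B$ in one line, whereas the paper obtains the same identity (Proposition~\ref{Prop Ddet}) by differentiating the normalization $\int\Gamma\,dx=e^{-(t-t_{0})\operatorname{Tr}B}$ under the integral sign---a more indirect but self-contained route that avoids invoking Jacobi's formula for merely Lipschitz $t\mapsto C(t,t_{0})$. Your shortcut is valid since $C$ is absolutely continuous, and it is arguably cleaner.
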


\begin{remark}
Our uniqueness results only require the condition (\ref{cond uniqueness}).
Indeed, as we will prove in Proposition \ref{Prop cond uniqueness} all the
solutions to the Cauchy problem (\ref{PdC}), in the sense of Definition
\ref{Def Cauchy}, with $f\in L^{p}\left(  \mathbb{R}^{N}\right)  $ for some
$p\in\lbrack1,\infty)$, $f$ $\in C_{b}^{0}\left(  \mathbb{R}^{N}\right)  $ or
$f\in C^{0}\left(  \mathbb{R}^{N}\right)  $ with $f$ satisfying
(\ref{exp bound}), do satisfy the condition (\ref{cond uniqueness}).
\end{remark}

\begin{remark}
All the statements in the above theorem still hold if the coefficients
$a_{ij}\left(  t\right)  $ are defined only for $t$ belonging to some interval
$I$. In this case the above formulas need to be considered only for
$t,t_{0}\in I$. In order to simplify notation, throughout the paper we will
only consider the case $I=\mathbb{R}$.
\end{remark}

The above theorem will be proved in section \ref{sec properties gamma}.

The second main result of this paper is a comparison between $\Gamma$ and the
fundamental solutions $\Gamma_{\alpha}$ of the model operators (\ref{L-alpha})
corresponding to $\alpha=\nu,\alpha=\nu^{-1}$ (with $\nu$ as in (\ref{nu})).
Specializing (\ref{Gamma}) to the operators (\ref{L-alpha}) we have%
\[
\Gamma_{\alpha}\left(  x,t;x_{0},t_{0}\right)  =\Gamma_{\alpha}\left(
x-E\left(  t-t_{0}\right)  x_{0},t-t_{0};0,0\right)
\]
with%
\begin{equation}
\Gamma_{\alpha}\left(  x,t;0,0\right)  =\frac{1}{\left(  4\pi\alpha\right)
^{N/2}\sqrt{\det C_{0}\left(  t\right)  }}e^{-\left(  \frac{1}{4\alpha}%
x^{T}C_{0}\left(  t\right)  ^{-1}x+t\operatorname*{Tr}B\right)  }
\label{G_nu polo0}%
\end{equation}
where, here and in the following, $C_{0}\left(  t\right)  =C\left(
t,0\right)  $ with $A_{0}\left(  t\right)  =I_{q}$ (identity $q\times q$
matrix). Explicitly:%
\begin{equation}
C_{0}\left(  t\right)  =\int_{0}^{t}E\left(  t-\sigma\right)  I_{q,N}E\left(
t-\sigma\right)  ^{T}d\sigma\text{,} \label{C_0}%
\end{equation}
where $I_{q,N}$ is the $N\times N$ matrix given by%
\[
I_{q,N}=%
\begin{bmatrix}
I_{q} & 0\\
0 & 0
\end{bmatrix}
.
\]

Then:

\begin{theorem}
\label{Thm comparison Gammas}For every $t>t_{0}$ and $x,x_{0}\in\mathbb{R}%
^{N}$ we have%
\begin{equation}
\nu^{N}\Gamma_{\nu}\left(  x,t;x_{0},t_{0}\right)  \leq\Gamma\left(
x,t;x_{0},t_{0}\right)  \leq\frac{1}{\nu^{N}}\Gamma_{\nu^{-1}}\left(
x,t;x_{0},t_{0}\right)  . \label{G G_nu}%
\end{equation}
\ 
\end{theorem}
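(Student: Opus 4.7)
The plan is to reduce the bilateral bound (\ref{G G_nu}) to a Loewner sandwich on the covariance matrix $C(t,t_{0})$ and then read off the comparison directly from the Gaussian form of $\Gamma$ and $\Gamma_{\alpha}$.

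First, I would observe that (\ref{nu}) is precisely the Loewner inequality $\nu I_{q}\le A_{0}(t)\le\nu^{-1}I_{q}$ for a.e.\ $t\in\mathbb{R}$, hence via (\ref{A(t)}) also $\nu I_{q,N}\le A(t)\le\nu^{-1}I_{q,N}$ on $\mathbb{R}^{N}$. The map $M\mapsto E(t-\sigma)\,M\,E(t-\sigma)^{T}$ preserves the order on symmetric matrices (congruence by an invertible matrix), and integration in $\sigma\in(t_{0},t)$ preserves it as well. Noting that for $\mathcal{L}_{\alpha}$ the covariance reduces to $C_{\alpha}(t,t_{0})=\alpha\,C_{0}(t-t_{0})$, which follows from (\ref{eq-EC})--(\ref{C_0}) by the substitution $s=\sigma-t_{0}$, this yields the key estimate
\[
\nu\,C_{0}(t-t_{0})\;\le\;C(t,t_{0})\;\le\;\nu^{-1}C_{0}(t-t_{0}),\qquad t>t_{0}.
\]

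Next I would use two standard consequences of the Loewner order for positive definite matrices: $M_{1}\le M_{2}$ implies $\det M_{1}\le\det M_{2}$ and $M_{1}^{-1}\ge M_{2}^{-1}$. Applied to the chain above, these give determinant bounds $\nu^{N}\det C_{0}(t-t_{0})\le\det C(t,t_{0})\le\nu^{-N}\det C_{0}(t-t_{0})$ and the corresponding inverse bounds. Writing $y=x-E(t-t_{0})x_{0}$, the factors $e^{-(t-t_{0})\operatorname*{Tr}B}$ cancel in the ratio $\Gamma/\Gamma_{\alpha}$, so from (\ref{Gamma}) and (\ref{G_nu polo0})
\[
\frac{\Gamma(x,t;x_{0},t_{0})}{\Gamma_{\nu^{-1}}(x,t;x_{0},t_{0})}=\sqrt{\frac{\det C_{\nu^{-1}}(t,t_{0})}{\det C(t,t_{0})}}\,\exp\!\left(-\tfrac{1}{4}y^{T}\bigl(C(t,t_{0})^{-1}-C_{\nu^{-1}}(t,t_{0})^{-1}\bigr)y\right).
\]
Since $C\le C_{\nu^{-1}}$, the matrix in the exponent is nonnegative and the exponential factor is $\le 1$; and since $C\ge C_{\nu}=\nu C_{0}$ while $\det C_{\nu^{-1}}=\nu^{-N}\det C_{0}$, the prefactor is $\le\nu^{-N}$. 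This gives the right half of (\ref{G G_nu}), and the same computation with $\alpha=\nu$ and the reversed inequalities yields the left half.

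There is no genuine obstacle; the only step deserving some attention is the Loewner-monotonicity of the map $A\mapsto\int_{t_{0}}^{t}E(t-\sigma)\,A\,E(t-\sigma)^{T}\,d\sigma$, which is what transmits the pointwise quadratic-form bound on $A(t)$ to the covariance matrix $C(t,t_{0})$. Once this sandwich is in place, the rest of the argument is a term-by-term comparison of Gaussian densities.
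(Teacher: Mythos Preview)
Your proposal is correct and follows essentially the same route as the paper: establish the Loewner sandwich $\nu C_{0}(t-t_{0})\le C(t,t_{0})\le\nu^{-1}C_{0}(t-t_{0})$ by congruence and integration, pass to inverses and determinants, and compare the Gaussian factors term by term. The paper isolates the covariance sandwich and the Loewner-to-inverse/determinant step as separate propositions (with a self-contained proof of the latter), while you quote these as standard facts, but the argument is the same.
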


The above theorem will be proved in section \ref{sec comparison}. The
following example illustrates the reason why our comparison result is useful.

\begin{example}
Let us consider the operator%
\[
\mathcal{L}u=a\left(  t\right)  u_{x_{1}x_{1}}+x_{1}u_{x_{2}}-u_{t}%
\]
with $a\left(  t\right)  $ measurable and satisfying%
\[
0<\nu\leq a\left(  t\right)  \leq\nu^{-1}\text{ for every }t\in\mathbb{R}.
\]
Let us compute $\Gamma\left(  x,t;0,0\right)  $ in this case. We have:%
\begin{align*}
A  &  =%
\begin{bmatrix}
a\left(  t\right)  & 0\\
0 & 0
\end{bmatrix}
;B=%
\begin{bmatrix}
0 & 0\\
1 & 0
\end{bmatrix}
;E\left(  s\right)  =%
\begin{bmatrix}
1 & 0\\
-s & 1
\end{bmatrix}
;\\
C\left(  t\right)   &  \equiv C\left(  t,0\right)  =\int_{0}^{t}%
\begin{bmatrix}
1 & 0\\
-s & 1
\end{bmatrix}%
\begin{bmatrix}
a\left(  t-s\right)  & 0\\
0 & 0
\end{bmatrix}%
\begin{bmatrix}
1 & -s\\
0 & 1
\end{bmatrix}
ds\\
&  =\int_{0}^{t}a\left(  t-s\right)
\begin{bmatrix}
1 & -s\\
-s & s^{2}%
\end{bmatrix}
ds\\
&  \left(  \text{after two integrations by parts}\right) \\
&  =%
\begin{bmatrix}
a^{\ast}\left(  t\right)  & -a^{\ast\ast}\left(  t\right) \\
-a^{\ast\ast}\left(  t\right)  & 2a^{\ast\ast\ast}\left(  t\right)
\end{bmatrix}
\end{align*}
where we have set:%
\[
a^{\ast}\left(  t\right)  =\int_{0}^{t}a\left(  s\right)  ds;a^{\ast\ast
}\left(  t\right)  =\int_{0}^{t}a^{\ast}\left(  s\right)  ds;a^{\ast\ast\ast
}\left(  t\right)  =\int_{0}^{t}a^{\ast\ast}\left(  s\right)  ds.
\]
Therefore we find, for $t>0$:%
\[
\Gamma\left(  x,t;0,0\right)  =\frac{1}{4\pi\sqrt{\det C\left(  t\right)  }%
}e^{-\left(  \frac{1}{4}x^{T}C\left(  t\right)  ^{-1}x\right)  }%
\]
with%
\[
C\left(  t\right)  ^{-1}=\frac{1}{\det C\left(  t\right)  }%
\begin{bmatrix}
2a^{\ast\ast\ast}\left(  t\right)  & a^{\ast\ast}\left(  t\right) \\
a^{\ast\ast}\left(  t\right)  & a^{\ast}\left(  t\right)
\end{bmatrix}
\]
so that, explicitly, we have%
\begin{align*}
&  \Gamma\left(  x,t;0,0\right) \\
&  =\frac{1}{4\pi\sqrt{\det C\left(  t\right)  }}\exp\left(  -\frac{\left(
2a^{\ast\ast\ast}\left(  t\right)  x_{1}^{2}+2a^{\ast\ast}\left(  t\right)
x_{1}x_{2}+a^{\ast}\left(  t\right)  x_{2}^{2}\right)  }{4\det C\left(
t\right)  }\right) \\
&  \text{with }\det C\left(  t\right)  =2a^{\ast}\left(  t\right)  a^{\ast
\ast\ast}\left(  t\right)  -a^{\ast\ast}\left(  t\right)  ^{2}.
\end{align*}

On the other hand, when considering the model operator
\[
L_{\alpha}u=\alpha u_{x_{1}x_{1}}+x_{1}u_{x_{2}}-u_{t}%
\]
with constant $\alpha>0$, we have%
\[
\Gamma_{\alpha}\left(  x,t;0,0\right)  =\frac{\sqrt{3}}{2\pi\alpha t^{2}}%
\exp\left(  -\frac{1}{\alpha}\left(  \frac{x_{1}^{2}}{t}+\frac{3x_{1}x_{2}%
}{t^{2}}+\frac{3x_{2}^{2}}{t^{3}}\right)  \right)  .
\]
The comparison result of Theorem \ref{Thm comparison Gammas} then reads as
follows:%
\[
\nu^{2}\Gamma_{\nu}\left(  x,t;0,0\right)  \leq\Gamma\left(  x,t;0,0\right)
\leq\frac{1}{\nu^{2}}\Gamma_{\nu^{-1}}\left(  x,t;0,0\right)
\]
or, explicitly,%
\begin{align*}
&  \nu\frac{\sqrt{3}}{2\pi t^{2}}\exp\left(  -\frac{1}{\nu}\left(  \frac
{x_{1}^{2}}{t}+\frac{3x_{1}x_{2}}{t^{2}}+\frac{3x_{2}^{2}}{t^{3}}\right)
\right)  \leq\Gamma\left(  x,t;0,0\right) \\
&  \leq\frac{1}{\nu}\frac{\sqrt{3}}{2\pi t^{2}}\exp\left(  -\nu\left(
\frac{x_{1}^{2}}{t}+\frac{3x_{1}x_{2}}{t^{2}}+\frac{3x_{2}^{2}}{t^{3}}\right)
\right)  .
\end{align*}

\end{example}

\bigskip

\textbf{Plan of the paper}. In \S \ref{sec computation Gamma} we compute the
explicit expression of the fundamental solution $\Gamma$ of $\mathcal{L}$ by
using the Fourier transform and the method of characteristics, showing how one
arrives to the the explicit formula (\ref{Gamma}). This procedure is somehow
formal as, due to the nonsmoothness of the coefficients $a_{ij}\left(
t\right)  $, we cannot plainly assume that the functional setting where the
construction is done is the usual distributional one. Since all the properties
of $\Gamma$ which qualify it as a fundamental solution will be proved in the
subsequent sections, on a purely logical basis one could say that
\S \ref{sec computation Gamma} is superfluous. Nevertheless, we prefer to
present this complete computation to show how this formula has been built. A
further reason to do this is the following one. The unique article where the
analogous computation in the constant coefficient case is written in detail
seems to be \cite{K1}, and it is written in Russian language.

In \S \ref{sec comparison} we prove Theorem \ref{Thm comparison Gammas},
comparing $\Gamma$ with the fundamental solutions of two model operators,
which is easier to write explicitly and to study. In
\S \ref{sec properties gamma} we will prove Theorem \ref{Thm main}, namely:
point (i) in \S \ref{sec regularity}; points (ii), (iii), (vi) in
\S \ref{sec solves equation}; points (iv), (v), (vii) in \S \ref{sec Cauchy}.

\section{Computation of the fundamental solution $\Gamma$%
\label{sec computation Gamma}}

As explained at the end of the introduction, this section contains a formal
computation of the fundamental solution $\Gamma$. To this aim, we choose any
$(x_{0},t_{0})\in\mathbb{R}^{N+1}$, and we look for a solution to the Cauchy
Problem
\begin{equation}
\left\{
\begin{tabular}
[c]{ll}%
$\mathcal{L}u=0$ & $\text{for }x\in\mathbb{R}^{N},t>t_{0}$\\
$u\left(  \cdot,t_{0}\right)  =\delta_{x_{0}}$ & in $\mathcal{D}^{\prime
}\left(  \mathbb{R}^{N}\right)  $%
\end{tabular}
\right.  \label{eq-Cauchy-Gamma}%
\end{equation}
by applying the Fourier transform with respect to $x$, and using the notation
\[
\widehat{u}\left(  \xi,t\right)  =\mathcal{F}\left(  u\left(  \cdot,t\right)
\right)  \left(  \xi\right)  :=\int_{\mathbb{R}^{N}}e^{-2\pi ix^{T}\xi
}u(x,t)dx.
\]
We have:%
\[
\sum_{i,j=1}^{q}a_{ij}\left(  t\right)  \left(  -4\pi^{2}\xi_{i}\xi
_{j}\right)  \widehat{u}+\sum_{k,j=1}^{N}b_{jk}\mathcal{F}\left(
x_{k}\partial_{x_{j}}u\right)  -\partial_{t}\widehat{u}=0.
\]
By the standard properties of the Fourier transform, it follows that
\[
\mathcal{F}\left(  x_{k}\partial_{x_{j}}u\right)  =\frac{1}{-2\pi i}%
\partial_{\xi_{k}}\left(  \mathcal{F}\left(  \partial_{x_{j}}u\right)
\right)  =\frac{1}{-2\pi i}\partial_{\xi_{k}}\left(  2\pi i\xi_{j}%
\widehat{u}\right)  =-\left(  \delta_{jk}\widehat{u}+\xi_{j}\partial_{\xi_{k}%
}\widehat{u}\right)  .
\]
then the problem (\ref{eq-Cauchy-Gamma}) is equivalent to the following Cauchy
problem that we write in compact form (recalling the definition of the
$A\left(  t\right)  \ $given in (\ref{A(t)})) as%
\begin{equation}
\left\{
\begin{array}
[c]{l}%
\left(  \nabla_{\xi}\widehat{u}(\xi,t)\right)  ^{T}B^{T}\xi+\partial
_{t}\widehat{u}(\xi,t)=-\left(  4\pi^{2}\xi^{T}A\left(  t\right)
\xi+\operatorname*{Tr}B\right)  \widehat{u}(\xi,t),\\
\\
\widehat{u}\left(  \xi,t_{0}\right)  =e^{-2\pi i\xi^{T}x_{0}}.
\end{array}
\right.  \label{eq-Cauchy-Gamma-hat}%
\end{equation}
Now we solve the problem (\ref{eq-Cauchy-Gamma-hat}) by the method of
characteristics. Fix any initial condition $\eta\in\mathbb{R}^{N}$, and
consider the system of ODEs:%
\begin{equation}
\left\{
\begin{array}
[c]{ll}%
\frac{d\xi}{ds}\left(  s\right)  =B^{T}\xi(s), & \xi\left(  0\right)  =\eta,\\
& \\
\frac{dt}{ds}\left(  s\right)  =1, & t\left(  0\right)  =t_{0},\\
& \\
\frac{dz}{ds}\left(  s\right)  =-\left(  4\pi^{2}\xi^{T}(s)A\left(
t(s)\right)  \xi(s)+\operatorname*{Tr}B\right)  z(s), & z\left(  0\right)
=e^{-2\pi i\eta^{T}x_{0}}.
\end{array}
\right.  \label{ODE}%
\end{equation}
We plainly find $t(s)=t_{0}+s$ and $\xi(s)=\exp\left(  sB^{T}\right)  \eta$,
so that the last equation becomes
\[
\frac{dz}{ds}\left(  s\right)  =-\left(  4\pi^{2}\left(  \exp\left(
sB^{T}\right)  \eta\right)  ^{T}A\left(  t_{0}+s\right)  \exp\left(
sB^{T}\right)  \eta+\operatorname*{Tr}B\right)  z\left(  s\right)  ,
\]
whose solution, with initial condition $z\left(  0\right)  =e^{-2\pi i\eta
^{T}x_{0}}$, is
\[
z\left(  s\right)  =\exp\left(  -4\pi^{2}\int_{0}^{s}\eta^{T}\left[
\exp\left(  \sigma B\right)  A\left(  t_{0}+\sigma\right)  \exp\left(  \sigma
B^{T}\right)  \right]  \eta d\sigma-s\operatorname*{Tr}B-2\pi i\eta^{T}%
x_{0}\right)  .
\]
Hence, substituting $s=t-t_{0},\eta=\exp\left(  \left(  t_{0}-t\right)
B^{T}\right)  \xi$, recalling the notation introduced in (\ref{eq-EC}), we
find%
\begin{align*}
&  \widehat{u}\left(  \xi,t\right)  =z(t-t_{0})\\
&  =\exp\left(  -4\pi^{2}\int_{0}^{t-t_{0}}\xi^{T}\exp\left(  \left(
t_{0}-t+\sigma\right)  B\right)  A\left(  t_{0}+\sigma\right)  \exp\left(
\left(  t_{0}-t+\sigma\right)  B^{T}\right)  \xi d\sigma\right. \\
&  \left.  -\left(  t-t_{0}\right)  \operatorname*{Tr}B-2\pi i\xi^{T}%
\exp\left(  \left(  t_{0}-t\right)  B\right)  x_{0}\frac{{}}{{}}\right)
\end{align*}%
\begin{align}
&  =\exp\left(  -4\pi^{2}\xi^{T}\left(  \int_{t_{0}}^{t}E\left(
\sigma-t\right)  A\left(  \sigma\right)  E\left(  \sigma-t\right)  ^{T}%
d\sigma\right)  \xi\right. \nonumber\\
&  \qquad\qquad\left.  -(t-t_{0})\operatorname*{Tr}B-2\pi i\xi^{T}E\left(
t-t_{0}\right)  x_{0}\frac{{}}{{}}\right) \nonumber\\
&  =\exp\left(  -4\pi^{2}\xi^{T}C\left(  t,t_{0}\right)  \xi-(t-t_{0}%
)\operatorname*{Tr}B-2\pi i\xi^{T}E\left(  t-t_{0}\right)  x_{0}\right)  .
\label{trasf Gamma zero}%
\end{align}

Let
\begin{align}
G\left(  \xi,t;x_{0},t_{0}\right)   &  =\exp\left(  -4\pi^{2}\xi^{T}C\left(
t,t_{0}\right)  \xi-\left(  t-t_{0}\right)  \operatorname*{Tr}B-2\pi i\xi
^{T}E\left(  t-t_{0}\right)  x_{0}\right) \nonumber\\
G_{0}\left(  \xi,t,t_{0}\right)   &  =\exp\left(  -4\pi^{2}\xi^{T}C\left(
t,t_{0}\right)  \xi\right)  \label{G G0}%
\end{align}
and note that if%
\[
\mathcal{F}\left(  k\left(  \cdot,t,t_{0}\right)  \right)  \left(  \xi\right)
=G_{0}\left(  \xi,t,t_{0}\right)
\]
then%
\begin{equation}
\mathcal{F}\left(  k\left(  \cdot-E\left(  t-t_{0}\right)  x_{0}%
,t,t_{0}\right)  \exp\left(  -\left(  t-t_{0}\right)  \operatorname*{Tr}%
B\right)  \right)  \left(  \xi\right)  =G\left(  \xi,t;x_{0},t_{0}\right)  ,
\label{F G G0}%
\end{equation}
hence it is enough to compute the antitransform of $G_{0}\left(  \xi
,t,t_{0}\right)  $. In order to do that, the following will be useful:

\begin{proposition}
\label{Prop Gauss transform}Let $A$ be an $N\times N$ real symmetric positive
constant matrix. Then:%
\[
\mathcal{F}\left(  e^{-\left(  x^{T}Ax\right)  }\right)  \left(  \xi\right)
=\left(  \frac{\pi^{N}}{\det A}\right)  ^{1/2}e^{-\pi^{2}\xi^{T}A^{-1}\xi}.
\]

\end{proposition}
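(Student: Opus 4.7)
The plan is to reduce the $N$-dimensional Fourier transform of a Gaussian to a product of one-dimensional Gaussian integrals via orthogonal diagonalization, and then invoke the classical one-variable formula.

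First I would diagonalize $A$. Since $A$ is real, symmetric and positive definite, there exists an orthogonal matrix $P$ and a diagonal matrix $D=\operatorname{diag}(\lambda_{1},\ldots,\lambda_{N})$ with $\lambda_{i}>0$ such that $A=P^{T}DP$. Then I would perform the change of variables $y=Px$ in the defining integral
\[
\mathcal{F}\bigl(e^{-x^{T}Ax}\bigr)(\xi)=\int_{\mathbb{R}^{N}}e^{-x^{T}Ax-2\pi i\xi^{T}x}\,dx;
\]
since $P$ is orthogonal, the Jacobian equals $1$, $x^{T}Ax=y^{T}Dy=\sum\lambda_{k}y_{k}^{2}$, and writing $\eta=P\xi$ we get $\xi^{T}x=\eta^{T}y$. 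The integral therefore factors as
\[
\int_{\mathbb{R}^{N}}e^{-\sum_{k}\lambda_{k}y_{k}^{2}-2\pi i\sum_{k}\eta_{k}y_{k}}\,dy=\prod_{k=1}^{N}\int_{\mathbb{R}}e^{-\lambda_{k}y_{k}^{2}-2\pi i\eta_{k}y_{k}}\,dy_{k}.
\]

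Next I would apply the one-dimensional Gaussian Fourier formula, namely
\[
\int_{\mathbb{R}}e^{-\lambda y^{2}-2\pi i\eta y}\,dy=\sqrt{\pi/\lambda}\;e^{-\pi^{2}\eta^{2}/\lambda}\qquad(\lambda>0),
\]
which is the standard identity, proved by completing the square $-\lambda y^{2}-2\pi i\eta y=-\lambda(y+\pi i\eta/\lambda)^{2}-\pi^{2}\eta^{2}/\lambda$ and then shifting the contour of integration from $\mathbb{R}$ back to $\mathbb{R}+\pi i\eta/\lambda$ (this is justified by Cauchy's theorem together with the rapid decay of $e^{-\lambda z^{2}}$ for $\operatorname{Re}z\to\pm\infty$ with bounded imaginary part), so that the residual integral is the real Gaussian $\int_{\mathbb{R}}e^{-\lambda y^{2}}\,dy=\sqrt{\pi/\lambda}$.

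Finally I would reassemble the factors. Multiplying over $k=1,\dots,N$ produces the constant
\[
\prod_{k=1}^{N}\sqrt{\pi/\lambda_{k}}=\left(\frac{\pi^{N}}{\det D}\right)^{1/2}=\left(\frac{\pi^{N}}{\det A}\right)^{1/2}
\]
and the exponent $-\pi^{2}\sum_{k}\eta_{k}^{2}/\lambda_{k}=-\pi^{2}\eta^{T}D^{-1}\eta=-\pi^{2}\xi^{T}P^{T}D^{-1}P\xi=-\pi^{2}\xi^{T}A^{-1}\xi$, which yields exactly the stated formula. There is no substantial obstacle in this argument; the only nonroutine step is the contour shift underlying the one-dimensional formula, and this is entirely standard.
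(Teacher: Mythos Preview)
Your proof is correct and follows the standard route of orthogonal diagonalization plus the one-dimensional Gaussian Fourier formula. The paper itself does not prove this proposition at all: it simply remarks that the formula is the characteristic function of a multivariate normal distribution and cites \cite[Prop.~1.1.2]{DP}. So your argument supplies what the paper omits, and there is nothing to compare beyond that.
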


The above formula is a standard known result in probability theory, being the
characteristic function of a multivariate normal distribution (see for
instance \cite[Prop. 1.1.2]{DP}).

To apply the previous proposition, and antitransform the function
$G_{0}\left(  \xi,t,t_{0}\right)  $, we still need to know that the matrix
$C\left(  t,t_{0}\right)  $ is strictly positive. By \cite{LP} we know that
the matrix $C_{0}\left(  t\right)  $ (see (\ref{C_0})) is positive, under the
structure conditions on $B$ expressed in (\ref{B}). Exploiting this fact, let
us show that the same is true for our $C\left(  t,t_{0}\right)  $:

\begin{proposition}
\label{Prop fq C C0}For every $\xi\in\mathbb{R}^{N}$ and every $t>t_{0}$ we
have%
\begin{equation}
\nu^{-1}\xi^{T}C_{0}\left(  t-t_{0}\right)  \xi\geq\xi^{T}C\left(
t,t_{0}\right)  \xi\geq\nu\xi^{T}C_{0}\left(  t-t_{0}\right)  \xi.
\label{ineq C C0}%
\end{equation}
In particular, the matrix $C\left(  t,t_{0}\right)  $ is positive for
$t>t_{0}$.
\end{proposition}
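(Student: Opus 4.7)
The plan is to reduce the matrix inequality (\ref{ineq C C0}) to a pointwise (in $\sigma$) estimate of quadratic forms under the integral sign, and then invoke the positivity of $C_{0}(t-t_{0})$ established in \cite{LP}. The key observation is that the weight $I_{q,N}$ appearing in $C_{0}$ is precisely the one that arises when the scalar ellipticity hypothesis (\ref{nu}) is lifted to an $N\times N$ matrix inequality for $A(\sigma)$.

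First I would rewrite assumption (H1), which is stated as (\ref{nu}) on $\mathbb{R}^{q}$, in the form of the $N\times N$ matrix inequality
\[
\nu\, I_{q,N} \;\le\; A(\sigma) \;\le\; \nu^{-1}\, I_{q,N},
\]
valid for a.e.\ $\sigma\in\mathbb{R}$. This is immediate from (\ref{A(t)}) and the definition of $I_{q,N}$, since both matrices are supported on the upper-left $q\times q$ block, where (\ref{nu}) gives exactly $\nu I_{q}\le A_{0}(\sigma)\le \nu^{-1} I_{q}$; on the orthogonal complement of $\mathbb{R}^{q}$ inside $\mathbb{R}^{N}$ both quadratic forms vanish, so no additional condition is needed there.

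Next, for fixed $\xi\in\mathbb{R}^{N}$ I would apply this inequality to the vector $v=E(t-\sigma)^{T}\xi$. Congruence by $E(t-\sigma)$ preserves the inequalities, so, for a.e.\ $\sigma\in(t_{0},t)$,
\[
\nu\, \xi^{T} E(t-\sigma) I_{q,N} E(t-\sigma)^{T} \xi \;\le\; \xi^{T} E(t-\sigma) A(\sigma) E(t-\sigma)^{T} \xi \;\le\; \nu^{-1}\, \xi^{T} E(t-\sigma) I_{q,N} E(t-\sigma)^{T} \xi.
\]
Integrating in $\sigma$ over $(t_{0},t)$, the middle term becomes $\xi^{T}C(t,t_{0})\xi$ by (\ref{eq-EC}). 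For the outer integrals, the change of variables $\tau=\sigma-t_{0}$ gives
\[
\int_{t_{0}}^{t} E(t-\sigma) I_{q,N} E(t-\sigma)^{T}\, d\sigma \;=\; \int_{0}^{t-t_{0}} E(t-t_{0}-\tau) I_{q,N} E(t-t_{0}-\tau)^{T}\, d\tau \;=\; C_{0}(t-t_{0}),
\]
by (\ref{C_0}). This proves (\ref{ineq C C0}), and positivity of $C(t,t_{0})$ for $t>t_{0}$ follows at once from the positivity of $C_{0}(t-t_{0})$ established by Lanconelli--Polidoro, combined with the lower bound just proved.

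I do not foresee a serious obstacle: the argument reduces to a congruence of symmetric matrices followed by a routine change of variable. The only point to check with some care is that the change $\tau=\sigma-t_{0}$ indeed identifies the outer integrals with the expression defining $C_{0}(t-t_{0})$ in (\ref{C_0}), so that the auxiliary result from \cite{LP} on positivity of $C_{0}$ can be applied with no further transformation.
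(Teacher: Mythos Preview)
Your proposal is correct and follows essentially the same approach as the paper: both arguments bound the integrand pointwise using the ellipticity condition $\nu I_{q,N}\le A(\sigma)\le \nu^{-1}I_{q,N}$ applied to the vector $E(t-\sigma)^{T}\xi$, then integrate over $(t_{0},t)$ and identify the result with $C_{0}(t-t_{0})$. The only cosmetic difference is that the paper writes out the components $\eta_{h}(s)=\sum_{k}\xi_{k}e_{kh}(s)$ explicitly, whereas you phrase the same step as a matrix congruence.
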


\begin{proof}%
\[
\xi^{T}C\left(  t,t_{0}\right)  \xi=\int_{t_{0}}^{t}\xi^{T}E\left(
t-s\right)  A\left(  s\right)  E\left(  t-s\right)  ^{T}\xi ds.
\]
Next, letting $E\left(  s\right)  =\left(  e_{ij}\left(  s\right)  \right)
_{i,j=1}^{N}$ and $\eta_{h}\left(  s\right)  =\sum_{k=1}^{N}\xi_{k}%
e_{kh}\left(  s\right)  $ we have%
\begin{align*}
\xi^{T}E\left(  t-s\right)  A\left(  s\right)  E\left(  t-s\right)  ^{T}\xi &
=\sum_{i,j,h,k=1}^{N}\xi_{i}e_{ij}\left(  t-s\right)  a_{jh}\left(  s\right)
e_{kh}\left(  t-s\right)  \xi_{k}\\
&  =\sum_{j,h=1}^{q}a_{jh}\left(  s\right)  \eta_{j}\left(  t-s\right)
\eta_{h}\left(  t-s\right)  \geq\nu\sum_{j=1}^{q}\eta_{j}\left(  t-s\right)
^{2}\\
&  =\nu\xi^{T}E\left(  t-s\right)  I_{q,N}E\left(  t-s\right)  ^{T}\xi
\end{align*}
where%
\[
I_{q,N}=%
\begin{bmatrix}
I_{q} & 0\\
0 & 0
\end{bmatrix}
.
\]
Integrating for $s\in\left(  t_{0},t\right)  $ the previous inequality we get%
\[
\xi^{T}C\left(  t,t_{0}\right)  \xi\geq\nu\xi^{T}\int_{t_{0}}^{t}E\left(
t-s\right)  I_{q,N}E\left(  t-s\right)  ^{T}ds\xi=\nu\xi^{T}C_{0}\left(
t-t_{0}\right)  \xi.
\]
Analogously we get the other bound.
\end{proof}

By the previous proposition, the matrix $C\left(  t,t_{0}\right)  $ is
positive definite for every $t>t_{0}$, since, under our assumptions, this is
true for $C_{0}\left(  t-t_{0}\right)  $. Therefore we can invert $C\left(
t,t_{0}\right)  $ and antitransform the function $G_{0}\left(  \xi
,t,t_{0}\right)  $ in (\ref{G G0}). Namely, applying Proposition
\ref{Prop Gauss transform} to $C\left(  t,t_{0}\right)  ^{-1}$ we get:%

\begin{align*}
&  \mathcal{F}\left(  e^{-\left(  x^{T}C\left(  t,t_{0}\right)  ^{-1}x\right)
}\right)  \left(  \xi\right)  =\pi^{N/2}\sqrt{\det C\left(  t,t_{0}\right)
}e^{-\pi^{2}\xi^{T}C\left(  t,t_{0}\right)  \xi}\\
&  \mathcal{F}\left(  \frac{1}{\left(  4\pi\right)  ^{N/2}\sqrt{\det C\left(
t,t_{0}\right)  }}e^{-\left(  \frac{1}{4}x^{TC\left(  t,t_{0}\right)
-1}x\right)  }\right)  \left(  \xi\right)  =e^{-4\pi^{2}\xi^{T}C\left(
t,t_{0}\right)  \xi}.
\end{align*}
Hence we have computed the antitransform of $G_{0}\left(  \xi,t,t_{0}\right)
$, and by (\ref{F G G0}) this also implies%
\begin{align*}
&  \mathcal{F}\left(  \frac{1}{\left(  4\pi\right)  ^{N/2}\sqrt{\det C\left(
t,t_{0}\right)  }}e^{-\left(  \frac{1}{4}\left(  x-E\left(  t-t_{0}\right)
x_{0}\right)  ^{T}C\left(  t,t_{0}\right)  ^{-1}\left(  x-E\left(
t-t_{0}\right)  x_{0}\right)  +\left(  t-t_{0}\right)  \operatorname*{Tr}%
B\right)  }\right)  \left(  \xi\right) \\
&  =\exp\left(  -4\pi^{2}\xi^{T}C\left(  t,t_{0}\right)  \xi-\left(
t-t_{0}\right)  \operatorname*{Tr}B-2\pi i\xi^{T}E\left(  t-t_{0}\right)
x_{0}\right)  .
\end{align*}
Hence the (so far, \textquotedblleft formal\textquotedblright) fundamental
solution of $\mathcal{L}$ is%
\begin{align*}
&  \Gamma\left(  x,t;x_{0},t_{0}\right) \\
&  =\frac{1}{\left(  4\pi\right)  ^{N/2}\sqrt{\det C\left(  t,t_{0}\right)  }%
}e^{-\left(  \frac{1}{4}\left(  x-E\left(  t-t_{0}\right)  x_{0}\right)
^{T}C\left(  t,t_{0}\right)  ^{-1}\left(  x-E\left(  t-t_{0}\right)
x_{0}\right)  +\left(  t-t_{0}\right)  \operatorname*{Tr}B\right)  },
\end{align*}
which is the expression given in Theorem \ref{Thm main}.

\section{Comparison between $\Gamma$ and fundamental solutions of model
operators\label{sec comparison}}

In this section we will prove Theorem \ref{Thm comparison Gammas}. The first
step is to derive from Proposition \ref{Prop fq C C0} an analogous control
between the quadratic forms associated to the inverse matrices $C_{0}\left(
t-t_{0}\right)  ^{-1},C\left(  t,t_{0}\right)  ^{-1}$. The following algebraic
fact will help:

\begin{proposition}
\label{Prop Pol LM}Let $C_{1},C_{2}$ be two real symmetric positive $N\times
N$ matrices. If%
\begin{equation}
\xi^{T}C_{1}\xi\leq\xi^{T}C_{2}\xi\text{ for every }\xi\in\mathbb{R}^{N}
\label{C1<C2}%
\end{equation}
then%
\[
\xi^{T}C_{2}^{-1}\xi\leq\xi^{T}C_{1}^{-1}\xi\text{ for every }\xi\in
\mathbb{R}^{N}%
\]
and%
\[
\det C_{1}\leq\det C_{2}.
\]

\end{proposition}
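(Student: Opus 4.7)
The plan is to reduce both claims to the case where one of the matrices is the identity, by conjugating with the symmetric square root of $C_1$. Since $C_1$ is real symmetric and positive definite, the spectral theorem furnishes a symmetric positive definite (hence invertible) square root $S=C_1^{1/2}$. Set $M=S^{-1}C_2 S^{-1}$, which is again symmetric and positive definite.

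First, I would show that hypothesis (\ref{C1<C2}) is equivalent to $M\geq I$ in the Loewner order. Substituting $\xi=S^{-1}\eta$ in (\ref{C1<C2}) and using $\xi^{T}C_1\xi=\eta^{T}\eta$ and $\xi^{T}C_2\xi=\eta^{T}M\eta$ yields
\[
\eta^{T}\eta\leq \eta^{T}M\eta\qquad\text{for every }\eta\in\mathbb{R}^{N},
\]
which, since $M$ is symmetric, is equivalent to asserting that every eigenvalue of $M$ is $\geq 1$.

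Next, I would use the elementary fact that if all eigenvalues of a symmetric positive definite matrix $M$ are $\geq 1$, then all eigenvalues of $M^{-1}$ are $\leq 1$, so $\eta^{T}M^{-1}\eta\leq \eta^{T}\eta$ for every $\eta$. Since $M^{-1}=SC_2^{-1}S$, re-substituting $\eta=S\xi$ gives
\[
\xi^{T}C_2^{-1}\xi\leq \xi^{T}C_1^{-1}\xi\qquad\text{for every }\xi\in\mathbb{R}^{N},
\]
the first conclusion of the proposition.

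For the determinant, the same spectral information gives $\det M\geq 1$ (product of eigenvalues each $\geq 1$); since $\det M=(\det S)^{-2}\det C_2=\det C_2/\det C_1$, this yields $\det C_1\leq\det C_2$. There is no serious obstacle in this plan; the only delicate point is the existence and invertibility of the symmetric square root $S$, which is a standard consequence of the spectral theorem for symmetric positive definite matrices.
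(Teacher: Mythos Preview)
Your proof is correct and follows essentially the same approach as the paper: both reduce to the identity by conjugating with $C_{1}^{-1/2}$ to obtain $M=C_{1}^{-1/2}C_{2}C_{1}^{-1/2}\geq I$, and both deduce the determinant inequality from the eigenvalues of this conjugated matrix. The only cosmetic difference is that you pass from $M\geq I$ to $M^{-1}\leq I$ by citing the spectral theorem directly, whereas the paper performs one more conjugation (by $M^{-1/2}$) to reach the same conclusion; the content is the same.
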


The first implication is already proved in \cite[Remark 2.1.]{Po}. For
convenience of the reader, we write a proof of both.

\bigskip

\begin{proof}
Let us fix some shorthand notation. Whenever (\ref{C1<C2}) holds for two
symmetric positive matrices, we will write $C_{1}\leq C_{2}$. Note that for
every symmetric $N\times N$ matrix $G$,
\begin{equation}
C_{1}\leq C_{2}\Longrightarrow GC_{1}G\leq GC_{2}G. \label{C1 G C2}%
\end{equation}
For any symmetric positive matrix $C$, we can rewrite $C=M^{T}\Delta M$ with
$M$ orthogonal and $\Delta=\operatorname*{diag}\left(  \lambda_{1}%
,...,\lambda_{n}\right)  $. Letting $C^{1/2}=M^{T}\Delta^{1/2}M$, one can
check that $C^{1/2}$ is still symmetric positive, and $C^{1/2}C^{1/2}=I$.
Moreover, writing $C^{-1/2}=\left(  C^{-1}\right)  ^{1/2}$ we have%
\begin{align*}
C^{-1/2}  &  =M^{T}\Delta^{-1/2}M\\
C^{-1/2}CC^{-1/2}  &  =I.
\end{align*}
Then, applying (\ref{C1 G C2}) with $G=C_{1}^{-1/2}$ we get%
\[
I=C_{1}^{-1/2}C_{1}C_{1}^{-1/2}\leq C_{1}^{-1/2}C_{2}C_{1}^{-1/2}.
\]
Next, applying (\ref{C1 G C2}) to the last inequality with $G=\left(
C_{1}^{-1/2}C_{2}C_{1}^{-1/2}\right)  ^{-1/2}$ we get%
\begin{align*}
&  C_{1}^{1/2}C_{2}^{-1}C_{1}^{1/2}\\
&  =\left(  C_{1}^{-1/2}C_{2}C_{1}^{-1/2}\right)  ^{-1}=\left(  C_{1}%
^{-1/2}C_{2}C_{1}^{-1/2}\right)  ^{-1/2}\left(  C_{1}^{-1/2}C_{2}C_{1}%
^{-1/2}\right)  ^{-1/2}\\
&  \leq\left(  C_{1}^{-1/2}C_{2}C_{1}^{-1/2}\right)  ^{-1/2}\left(
C_{1}^{-1/2}C_{2}C_{1}^{-1/2}\right)  \left(  C_{1}^{-1/2}C_{2}C_{1}%
^{-1/2}\right)  ^{-1/2}=I.
\end{align*}
Finally, applying (\ref{C1 G C2}) to the last inequality with $G=C_{1}^{-1/2}$
we get%
\[
C_{2}^{-1}=C_{1}^{-1/2}\left(  C_{1}^{1/2}C_{2}^{-1}C_{1}^{1/2}\right)
C_{1}^{-1/2}\leq C_{1}^{-1/2}C_{1}^{-1/2}=C_{1}^{-1}%
\]
so the first statement is proved. To show the inequality on determinants, we
can write, since $C_{1}\leq C_{2}$,%
\[
C_{2}^{-1/2}C_{1}C_{2}^{-1/2}\leq I.
\]
Letting $M$ be an orthogonal matrix that diagonalizes $C_{2}^{-1/2}C_{1}%
C_{2}^{-1/2}$ we get%
\[
\operatorname{diag}\left(  \lambda_{1},...,\lambda_{n}\right)  =M^{T}%
C_{2}^{-1/2}C_{1}C_{2}^{-1/2}M\leq I
\]
which implies $0<\lambda_{i}\leq1$ for $i=1,2,...,n$ hence also%
\[
1\geq%
%TCIMACRO{\tprod \limits_{i=1}^{n}}%
%BeginExpansion
{\textstyle\prod\limits_{i=1}^{n}}
%EndExpansion
\lambda_{i}=\det\left(  M^{T}C_{2}^{-1/2}C_{1}C_{2}^{-1/2}M\right)
=\frac{\det C_{1}}{\det C_{2}},
\]
so we are done.
\end{proof}

Applying Propositions \ref{Prop Pol LM} and \ref{Prop fq C C0} we immediately
get the following:

\begin{proposition}
\label{Prop inverse matrix}For every $\xi\in\mathbb{R}^{N}$ and every
$t>t_{0}$ we have%
\begin{equation}
\nu^{-1}\xi^{T}C_{0}\left(  t-t_{0}\right)  ^{-1}\xi\geq\xi^{T}C\left(
t,t_{0}\right)  ^{-1}\xi\geq\nu\xi^{T}C_{0}\left(  t-t_{0}\right)  ^{-1}%
\xi\label{fq inverse}%
\end{equation}%
\begin{equation}
\nu^{-N}\det C_{0}\left(  t-t_{0}\right)  \geq\det C\left(  t,t_{0}\right)
\geq\nu^{N}\det C_{0}\left(  t-t_{0}\right)  \label{det C det C0}%
\end{equation}
for every $t>t_{0}$.
\end{proposition}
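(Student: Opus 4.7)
The proposition is essentially the combination of the two preceding results, so my plan is to rewrite the quadratic form bounds of Proposition \ref{Prop fq C C0} as matrix inequalities in the sense $\leq$ used in Proposition \ref{Prop Pol LM}, and then apply that proposition twice (once to each side of the sandwich).

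First, I would observe that (\ref{ineq C C0}) is precisely the pair of matrix inequalities
\[
\nu\, C_{0}(t-t_{0})\leq C(t,t_{0})\leq \nu^{-1}C_{0}(t-t_{0}),
\]
and that, since $\nu>0$, both $\nu C_{0}(t-t_{0})$ and $\nu^{-1}C_{0}(t-t_{0})$ are symmetric positive matrices (because $C_{0}(t-t_{0})$ is, by the result from \cite{LP} already used in Proposition \ref{Prop fq C C0}). Hence Proposition \ref{Prop Pol LM} applies to each of the two inequalities separately.

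Applying Proposition \ref{Prop Pol LM} to $\nu C_{0}(t-t_{0})\leq C(t,t_{0})$ yields, at the level of inverses, $C(t,t_{0})^{-1}\leq (\nu C_{0}(t-t_{0}))^{-1}=\nu^{-1}C_{0}(t-t_{0})^{-1}$, which is the upper bound in (\ref{fq inverse}); symmetrically, applying it to $C(t,t_{0})\leq \nu^{-1}C_{0}(t-t_{0})$ gives $(\nu^{-1}C_{0}(t-t_{0}))^{-1}=\nu C_{0}(t-t_{0})^{-1}\leq C(t,t_{0})^{-1}$, which is the lower bound. For the determinants, the same two applications of Proposition \ref{Prop Pol LM} yield $\det(\nu C_{0}(t-t_{0}))\leq \det C(t,t_{0})\leq \det(\nu^{-1}C_{0}(t-t_{0}))$, and using $\det(\lambda M)=\lambda^{N}\det M$ for any scalar $\lambda$ and $N\times N$ matrix $M$, this gives exactly (\ref{det C det C0}).

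There is no real obstacle here: everything follows by directly invoking the preceding two propositions, and the only bookkeeping is to keep track of the factors $\nu^{\pm 1}$ when inverting, which become $\nu^{\mp N}$ upon taking determinants. The proof is therefore a two-line application of the tools already developed.
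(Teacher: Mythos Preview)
Your proposal is correct and follows exactly the approach indicated in the paper, which simply states that the result follows immediately by applying Propositions~\ref{Prop Pol LM} and~\ref{Prop fq C C0}. Your write-up just makes explicit the bookkeeping with the scalar factors $\nu^{\pm1}$ and $\nu^{\pm N}$, which is precisely what is needed.
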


We are now in position to give the

\bigskip

\begin{proof}
[Proof of Thm. \ref{Thm comparison Gammas}]Recall that $C_{0}\left(  t\right)
$ is defined in (\ref{C_0}). From the definition of the matrix $C\left(
t,t_{0}\right)  $ one immediately reads that, letting $C_{\nu}\left(
t,t_{0}\right)  $ be the matrix corresponding to the operator $\mathcal{L}%
_{\nu}$, one has%
\begin{equation}
C_{\nu}\left(  t,t_{0}\right)  =\nu C_{0}\left(  t-t_{0}\right)  \label{C nu}%
\end{equation}
hence also
\begin{equation}
\det\left(  C_{\nu}\left(  t,t_{0}\right)  \right)  =\nu^{N}\det C_{0}\left(
t-t_{0}\right)  . \label{det Cnu}%
\end{equation}
From the explicit form of $\Gamma$ given in (\ref{Gamma}) we read that
whenever the matrix $A\left(  t\right)  $ is constant one has%
\[
\Gamma\left(  x,t;x_{0},t_{0}\right)  =\Gamma\left(  x-E\left(  t-t_{0}%
\right)  x_{0},t-t_{0};0,0\right)  ,
\]
in particular this relation holds for $\Gamma_{\nu}$. Then (\ref{Gamma}),
(\ref{C nu}), (\ref{det Cnu}) imply (\ref{G_nu polo0}). Therefore
(\ref{fq inverse}) and (\ref{det C det C0}) give:%
\begin{align*}
\Gamma\left(  x,t;x_{0},t_{0}\right)   &  =\frac{e^{-\left(  \frac{1}%
{4}\left(  x-E\left(  t-t_{0}\right)  x_{0}\right)  ^{T}C\left(
t,t_{0}\right)  ^{-1}\left(  x-E\left(  t-t_{0}\right)  x_{0}\right)  +\left(
t-t_{0}\right)  \operatorname*{Tr}B\right)  }}{\left(  4\pi\right)
^{N/2}\sqrt{\det C\left(  t,t_{0}\right)  }}\\
&  \leq\frac{e^{-\left(  \frac{\nu}{4}\left(  x-E\left(  t-t_{0}\right)
x_{0}\right)  ^{T}C_{0}\left(  t-t_{0}\right)  ^{-1}\left(  x-E\left(
t-t_{0}\right)  x_{0}\right)  +\left(  t-t_{0}\right)  \operatorname*{Tr}%
B\right)  }}{\left(  4\pi\right)  ^{N/2}\sqrt{\nu^{N}\det C_{0}\left(
t-t_{0}\right)  }}\\
&  =\frac{1}{\nu^{N}}\Gamma_{\nu^{-1}}\left(  x,t;x_{0},t_{0}\right)  .
\end{align*}
Analogously,%
\begin{align*}
\Gamma\left(  x,t;x_{0},t_{0}\right)   &  \geq\frac{\nu^{N/2}e^{-\left(
\frac{1}{4\nu}\left(  x-E\left(  t-t_{0}\right)  x_{0}\right)  ^{T}%
C_{0}\left(  t-t_{0}\right)  ^{-1}\left(  x-E\left(  t-t_{0}\right)
x_{0}\right)  +\left(  t-t_{0}\right)  \operatorname*{Tr}B\right)  }}{\left(
4\pi\right)  ^{N/2}\sqrt{\det C_{0}\left(  t-t_{0}\right)  }}\\
&  =\nu^{N}\Gamma_{\nu}\left(  x,t;x_{0},t_{0}\right)
\end{align*}
so we have (\ref{G G_nu}).
\end{proof}

As anticipated in the introduction, the above comparison result has further
useful consequences when combined with some results of \cite{LP}, where
$\Gamma_{\alpha}$ is compared with the fundamental solution of the
\textquotedblleft principal part operator\textquotedblright%
\ $\widetilde{\mathcal{L}}_{\alpha}$ having the same matrix $A=\alpha I_{q,N}$
and a simpler matrix $B$, actually the matrix obtained from (\ref{B})
annihilating all the $\ast$ blocks. This operator $\widetilde{\mathcal{L}%
}_{\alpha}$ is also $2$-homogeneous with respect to dilations and its matrix
$C_{0}\left(  t\right)  $ (which in the next statement is called $C_{0}^{\ast
}\left(  t\right)  $) has a simpler form, which gives a useful asymptotic
estimate for the matrix of $\mathcal{L}_{\alpha}$. Namely, the following holds:

\begin{proposition}
[Short-time asymptotics of the matrix $C_{0}\left(  t\right)  $]%
\label{Prop LP}(See \cite[(3.14), (3.9), (2.17)]{LP}) There exist integers
$1=\sigma_{1}\leq\sigma_{2}\leq...\leq\sigma_{N}=2\kappa+1$ (with $\kappa$ as
in (\ref{B})), a constant invertible $N\times N$ matrix $C_{0}^{\ast}\left(
1\right)  $ and a $N\times N$ diagonal matrix%
\[
D_{0}\left(  \lambda\right)  =\operatorname*{diag}\left(  \lambda^{\sigma_{1}%
},\lambda^{\sigma_{2}},...,\lambda^{\sigma_{N}}\right)
\]
such that the following holds. If we let
\[
C_{0}^{\ast}\left(  t\right)  =D_{0}\left(  t^{1/2}\right)  C_{0}^{\ast
}\left(  1\right)  D_{0}\left(  t^{1/2}\right)  ,
\]
so that%
\[
\det C_{0}^{\ast}\left(  t\right)  =c_{N}t^{Q}%
\]
where $Q=\sum_{i=1}^{N}\sigma_{i},$ then:%
\begin{align*}
\det C_{0}\left(  t\right)   &  =\det C_{0}^{\ast}\left(  t\right)  \left(
1+tO\left(  1\right)  \right)  \text{ as }t\rightarrow0^{+}\\
x^{T}C_{0}\left(  t\right)  ^{-1}x  &  =x^{T}C_{0}^{\ast}\left(  t\right)
^{-1}x\left(  1+tO\left(  1\right)  \right)  \text{ as }t\rightarrow0^{+}%
\end{align*}
where in the second equality $O\left(  1\right)  $ stands for a bounded
function on $\mathbb{R}^{N}\times(0,1]$.
\end{proposition}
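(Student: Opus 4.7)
The statement is drawn from \cite{LP}; here I outline how one recovers it. The plan has two steps: first establish the homogeneity of $C_0^{\ast}(t)$, namely the matrix associated to the principal part operator, then compare $C_0(t)$ with $C_0^{\ast}(t)$ by a Duhamel-type expansion.

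Write $B = B^{\ast} + B'$, where $B^{\ast}$ is obtained from \eqref{B} by zeroing all the $\ast$ blocks; this $B^{\ast}$ is nilpotent of step $\kappa+1$. Assigning the weight $2j+1$ to the $m_j$ coordinates belonging to the $j$-th block (for $j = 0, 1, \ldots, \kappa$), which produces the integers $1 = \sigma_1 \leq \sigma_2 \leq \ldots \leq \sigma_N = 2\kappa+1$, and setting $D_0(\lambda) = \operatorname*{diag}(\lambda^{\sigma_1}, \ldots, \lambda^{\sigma_N})$, one verifies the commutation $D_0(\lambda)^{-1} B^{\ast} D_0(\lambda) = \lambda^{-2} B^{\ast}$. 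This makes the principal part operator $2$-homogeneous under $\delta_\lambda(x,t) = (D_0(\lambda)x, \lambda^2 t)$. Applied to the defining integral of $C_0^{\ast}(t)$, together with the change of variable $\sigma = t\tau$, the commutation yields the scaling identity
\[
C_0^{\ast}(t) = D_0(t^{1/2}) C_0^{\ast}(1) D_0(t^{1/2}),
\]
and, taking determinants, $\det C_0^{\ast}(t) = c_N t^Q$ with $c_N = \det C_0^{\ast}(1)$ and $Q = \sum_{i=1}^N \sigma_i$.

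To pass to the full matrix $C_0(t)$, I would use Duhamel's identity
\[
e^{-sB} = e^{-sB^{\ast}} - \int_0^s e^{-(s-\tau)B^{\ast}} B' e^{-\tau B}\, d\tau
\]
inside the definition of $C_0(t)$. After rescaling $\sigma = t\tau$ and conjugating by $D_0(t^{-1/2})$, the extra factor of $s$ produced by the Duhamel integral becomes a factor of $t$ relative to the principal term, giving an expansion
\[
D_0(t^{-1/2}) C_0(t) D_0(t^{-1/2}) = C_0^{\ast}(1) + t\, R(t),
\]
where $R(t)$ is a symmetric matrix-valued function bounded on $[0,1]$. Both asymptotics then follow: taking determinants yields $\det C_0(t) = \det C_0^{\ast}(t)(1 + tO(1))$, while substituting $y = D_0(t^{-1/2}) x$ rewrites $x^T C_0(t)^{-1} x$ as $y^T (C_0^{\ast}(1) + tR(t))^{-1} y$, which expands by Neumann series around the positive matrix $C_0^{\ast}(1)$ to give $x^T C_0^{\ast}(t)^{-1} x (1 + tO(1))$ uniformly in $x$.

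The main technical obstacle is the weight bookkeeping in the Duhamel step: the entries of $B'$ connect blocks of different weights $\sigma_i, \sigma_j$, and one must check that each such mixing produces extra powers of $\sigma$ or $t-\sigma$ that are exactly absorbed by the conjugation with $D_0(t^{1/2})$, so that $R(t)$ is genuinely bounded as $t \to 0^+$ rather than singular. Once this weight matching is verified, everything reduces to continuity of matrix inversion at the positive definite matrix $C_0^{\ast}(1)$.
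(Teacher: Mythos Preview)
The paper does not prove this proposition at all: it is stated with the parenthetical ``(See \cite[(3.14), (3.9), (2.17)]{LP})'' and used as a black box, so there is no proof in the paper to compare against. Your outline goes beyond what the paper does by sketching how the result is obtained in \cite{LP}.

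Your sketch is sound in its architecture. The scaling identity $C_0^{\ast}(t)=D_0(t^{1/2})C_0^{\ast}(1)D_0(t^{1/2})$ follows exactly as you say from the commutation $D_0(\lambda)^{-1}B^{\ast}D_0(\lambda)=\lambda^{-2}B^{\ast}$, together with the observation $D_0(t^{-1/2})I_{q,N}D_0(t^{-1/2})=t^{-1}I_{q,N}$ (since $\sigma_i=1$ for $i\le q$). The perturbative step is also the right idea: in \cite{LP} the comparison is carried out by expanding $E(s)=e^{-sB}$ and tracking the block degrees, which is essentially your Duhamel argument recast. The crucial structural fact that makes the bookkeeping close is that the $\ast$ blocks of $B$, hence the entries of $B'$, sit only in block positions $(i,j)$ with $i\le j$, so that $(D_0(\lambda)^{-1}B'D_0(\lambda))_{rs}=\lambda^{\sigma_s-\sigma_r}B'_{rs}$ with $\sigma_s\ge\sigma_r$; for $\lambda=t^{1/2}\to 0$ these factors stay bounded, and the extra $s$-integration from Duhamel supplies the missing factor of $t$. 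You flag this as the obstacle without carrying it out, which is honest; it is precisely the computation encoded in \cite[(3.14)]{LP}.

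In short: nothing to compare, since the paper simply quotes \cite{LP}; your outline is a correct reconstruction of that argument, with the weight-matching step left as the acknowledged gap.
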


The above result allows to prove the following more explicit upper bound on
$\Gamma$ for short times:

\begin{proposition}
\label{Prop lim t0}There exist constants $c,\delta\in\left(  0,1\right)  $
such that for $0<t_{0}-t\leq\delta$ and every $x,x_{0}\in\mathbb{R}^{N}$ we have:
\end{proposition}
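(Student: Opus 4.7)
The plan is to chain the upper bound in Theorem \ref{Thm comparison Gammas} with the short-time asymptotics of Proposition \ref{Prop LP} to extract an explicit bound of Gaussian type in which the complicated matrix $C\left(t,t_{0}\right)$ is entirely replaced by the homogeneous matrix $C_{0}^{\ast}\left(t-t_{0}\right)$. First I would start from
\[
\Gamma\left(x,t;x_{0},t_{0}\right)\leq\nu^{-N}\Gamma_{\nu^{-1}}\left(x,t;x_{0},t_{0}\right)
\]
and write the right-hand side explicitly, using $\Gamma_{\nu^{-1}}\left(x,t;x_{0},t_{0}\right)=\Gamma_{\nu^{-1}}\left(x-E\left(t-t_{0}\right)x_{0},t-t_{0};0,0\right)$ together with (\ref{G_nu polo0}). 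This reduces the problem to controlling $\det C_{0}\left(t-t_{0}\right)$ from below and the quadratic form $y^{T}C_{0}\left(t-t_{0}\right)^{-1}y$ from below for $y=x-E\left(t-t_{0}\right)x_{0}$.

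Next I would invoke Proposition \ref{Prop LP}, which gives
\[
\det C_{0}\left(\tau\right)=c_{N}\tau^{Q}\left(1+\tau O\left(1\right)\right),\qquad y^{T}C_{0}\left(\tau\right)^{-1}y=y^{T}C_{0}^{\ast}\left(\tau\right)^{-1}y\left(1+\tau O\left(1\right)\right)
\]
as $\tau\rightarrow0^{+}$, the second asymptotic being uniform in $y\in\mathbb{R}^{N}$. Choosing $\delta\in\left(0,1\right)$ so small that the bounded factors $\left(1+\tau O\left(1\right)\right)$ stay inside, say, $\left[1/2,2\right]$ for $0<\tau\leq\delta$, one obtains simultaneously $\det C_{0}\left(\tau\right)\geq\tfrac{c_{N}}{2}\tau^{Q}$ and $y^{T}C_{0}\left(\tau\right)^{-1}y\geq\tfrac{1}{2}y^{T}C_{0}^{\ast}\left(\tau\right)^{-1}y$. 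Substituting into the explicit form of $\Gamma_{\nu^{-1}}$ then yields a bound of the shape
\[
\Gamma\left(x,t;x_{0},t_{0}\right)\leq\frac{c}{\left(t-t_{0}\right)^{Q/2}}\exp\left(-\frac{1}{c}\left(x-E\left(t-t_{0}\right)x_{0}\right)^{T}C_{0}^{\ast}\left(t-t_{0}\right)^{-1}\left(x-E\left(t-t_{0}\right)x_{0}\right)\right)
\]
for a constant $c$ depending only on $\nu,B,N$; the factor $e^{-\left(t-t_{0}\right)\operatorname{Tr}B}$ can be absorbed into $c$ on the compact range $t-t_{0}\leq\delta\leq1$.

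The work is essentially routine once Theorem \ref{Thm comparison Gammas} and Proposition \ref{Prop LP} are in hand. The only place where one must be careful is in the exponential: since Proposition \ref{Prop LP} gives a multiplicative (not additive) asymptotic for the quadratic form, a factor like $\tfrac{1}{2}$ inside the exponent costs a factor $2$ in the effective constant in front of $C_{0}^{\ast}\left(t-t_{0}\right)^{-1}$, which is harmless but must be tracked. I would choose $\delta$ uniformly so that both asymptotics in Proposition \ref{Prop LP} are simultaneously controlled, and then enlarge $c$ once at the end to swallow $\nu^{-N}$, $\left(4\pi\right)^{-N/2}$, the $\tfrac{1}{2}$'s from the asymptotics, and $e^{\delta\left|\operatorname{Tr}B\right|}$. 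The main conceptual point is that the homogeneous matrix $C_{0}^{\ast}$, being $D_{0}\left(t^{1/2}\right)C_{0}^{\ast}\left(1\right)D_{0}\left(t^{1/2}\right)$, makes the resulting Gaussian estimate genuinely explicit and dilation-adapted, which is the whole reason for going through this chain of comparisons.
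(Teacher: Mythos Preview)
Your approach is exactly the paper's: start from the upper bound $\Gamma\leq\nu^{-N}\Gamma_{\nu^{-1}}$ of Theorem \ref{Thm comparison Gammas}, write $\Gamma_{\nu^{-1}}$ via (\ref{G_nu polo0}), and then use Proposition \ref{Prop LP} to replace $C_{0}$ by $C_{0}^{\ast}$ for small $t-t_{0}$. The handling of $\det C_{0}$, of the multiplicative error $(1+\tau O(1))$, and of the factor $e^{-(t-t_{0})\operatorname{Tr}B}$ is all correct and matches the paper.

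There is, however, one genuine step missing. Your argument terminates at
\[
\Gamma\left(x,t;x_{0},t_{0}\right)\leq\frac{c}{\left(t-t_{0}\right)^{Q/2}}\exp\left(-\tfrac{1}{c}\,y^{T}C_{0}^{\ast}\left(t-t_{0}\right)^{-1}y\right),\qquad y=x-E\left(t-t_{0}\right)x_{0},
\]
whereas the stated bound (\ref{Upper Gamma t piccolo}) has $\exp\bigl(-c\,\lvert y\rvert^{2}/(t-t_{0})\bigr)$ in the exponent. You still need to pass from the anisotropic quadratic form to the isotropic one. The paper does this in one line: since $C_{0}^{\ast}(\tau)=D_{0}(\tau^{1/2})C_{0}^{\ast}(1)D_{0}(\tau^{1/2})$ with $C_{0}^{\ast}(1)$ constant and positive definite,
\[
y^{T}C_{0}^{\ast}(\tau)^{-1}y\geq c\,\bigl\lvert D_{0}(\tau^{-1/2})y\bigr\rvert^{2}=c\sum_{i=1}^{N}\frac{y_{i}^{2}}{\tau^{\sigma_{i}}}\geq c\,\frac{\lvert y\rvert^{2}}{\tau}
\]
for $0<\tau\leq\delta<1$, the last inequality because every $\sigma_{i}\geq\sigma_{1}=1$. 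Without this step your bound, while correct, is not yet the one claimed in the Proposition; once you add it, your proof coincides with the paper's.
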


\begin{equation}
\Gamma\left(  x,t;x_{0},t_{0}\right)  \leq\frac{1}{c\left(  t-t_{0}\right)
^{Q/2}}e^{-c\frac{\left\vert x-E\left(  t-t_{0}\right)  x_{0}\right\vert ^{2}%
}{t-t_{0}}}. \label{Upper Gamma t piccolo}%
\end{equation}

\begin{proof}
By (\ref{G G_nu}) and the properties of the fundamental solution when the
matrix $A\left(  t\right)  $ is constant, we can write:%
\begin{equation}
\Gamma\left(  x,t;x_{0},t_{0}\right)  \leq\nu^{-N}\Gamma_{\nu^{-1}}\left(
x-E\left(  t-t_{0}\right)  x_{0},t-t_{0};0,0\right)  . \label{G Gnu explicit}%
\end{equation}
On the other hand,%
\[
\Gamma_{\alpha}\left(  y,t;0,0\right)  =\frac{1}{\left(  4\pi\alpha\right)
^{N/2}\sqrt{\det C_{0}\left(  t\right)  }}e^{-\left(  \frac{1}{4\alpha}%
y^{T}C_{0}\left(  t\right)  ^{-1}y+t\operatorname*{Tr}B\right)  }%
\]
and by Proposition \ref{Prop LP} there exist $c,\delta\in\left(  0,1\right)  $
such that for $0<t\leq\delta$ and every $y\in\mathbb{R}^{N}$%
\begin{align*}
\det C_{0}\left(  t\right)   &  =\det C_{0}^{\ast}\left(  t\right)  \left(
1+tO\left(  1\right)  \right)  \geq c\det C_{0}^{\ast}\left(  t\right)
=c_{1}t^{Q}\\
y^{T}C_{0}\left(  t\right)  ^{-1}y  &  =y^{T}C_{0}^{\ast}\left(  t\right)
^{-1}y\left(  1+tO\left(  1\right)  \right)  \geq c\,y^{T}C_{0}^{\ast}\left(
t\right)  ^{-1}y\\
&  \geq c\left\vert D_{0}\left(  t^{-1/2}\right)  y\right\vert ^{2}%
=c\sum_{i=1}^{N}\frac{y_{i}^{2}}{t^{\sigma_{i}}}\geq c\frac{\left\vert
y\right\vert ^{2}}{t}.
\end{align*}
Hence%
\begin{align*}
\Gamma\left(  x,t;x_{0},t_{0}\right)   &  \leq\frac{1}{\left(  4\pi\nu\right)
^{N/2}\left(  t-t_{0}\right)  ^{Q/2}}e^{\frac{\nu}{4}\left\vert
\operatorname*{Tr}B\right\vert }e^{-\nu c\frac{\left\vert x-E\left(
t-t_{0}\right)  x_{0}\right\vert ^{2}}{t-t_{0}}}\\
&  =\frac{1}{c_{2}\left(  t-t_{0}\right)  ^{Q/2}}e^{-c_{2}\frac{\left\vert
x-E\left(  t-t_{0}\right)  x_{0}\right\vert ^{2}}{t-t_{0}}}.
\end{align*}

\end{proof}

\section{Properties of the fundamental solution and Cauchy
problem\label{sec properties gamma}}

\subsection{Regularity properties of $\Gamma$ and
asymptotics\label{sec regularity}}

In this section we will prove point (i) of Theorem \ref{Thm main}.

With reference to the explicit form of $\Gamma$ in (\ref{Gamma}), we start
noting that the elements of the matrix%
\[
E\left(  t-\sigma\right)  A\left(  \sigma\right)  E\left(  t-\sigma\right)
^{T}%
\]
are measurable and uniformly essentially bounded for $\left(  t,\sigma
,t_{0}\right)  $ varying in any region $H\leq t_{0}\leq\sigma\leq t\leq K$ for
fixed $H,K\in\mathbb{R}$. This implies that the matrix%
\[
C\left(  t,t_{0}\right)  =\int_{t_{0}}^{t}E\left(  t-\sigma\right)  A\left(
\sigma\right)  E\left(  t-\sigma\right)  ^{T}d\sigma
\]
is Lipschitz continuous with respect to $t$ and with respect to $t_{0}$ in any
region $H\leq t_{0}\leq t\leq K$ for fixed $H,K\in\mathbb{R}$. Moreover,
$C\left(  t,t_{0}\right)  $ and $\det C\left(  t,t_{0}\right)  $ are jointly
continuous in $\left(  t,t_{0}\right)  $. Recalling that, by Proposition
\ref{Prop fq C C0}, the matrix $C\left(  t,t_{0}\right)  $ is positive
definite for any $t>t_{0}$, we also have that $C\left(  t,t_{0}\right)  ^{-1}$
is Lipschitz continuous with respect to $t$ and with respect to $t_{0}$ in any
region $H\leq t_{0}+\delta\leq t\leq K$ for fixed $H,K\in\mathbb{R}$ and
$\delta>0$, and is jointly continuous in $\left(  t,t_{0}\right)  $ for
$t>t_{0}$.

From the explicit form of $\Gamma$ and the previous remarks we conclude that
$\Gamma\left(  x,t;x_{0},t_{0}\right)  $ is jointly continuous in $\left(
x,t;x_{0},t_{0}\right)  $ for $t>t_{0}$, smooth w.r.t. $x$ and $x_{0}$ for
$t>t_{0}$ and Lipschitz continuous with respect to $t$ and with respect to
$t_{0}$ in any region $H\leq t_{0}+\delta\leq t\leq K$ for fixed
$H,K\in\mathbb{R}$ and $\delta>0$.

Moreover, every derivative $\frac{\partial^{\alpha+\beta}\Gamma}{\partial
x^{\alpha}\partial^{\beta}x_{0}}$ is given by $\Gamma$ times a polynomial in
$\left(  x,x_{0}\right)  $ with coefficients Lipschitz continuous with respect
to $t$ and with respect to $t_{0}$ in any region $H\leq t_{0}+\varepsilon\leq
t\leq K$ for fixed $H,K\in\mathbb{R}$ and $\varepsilon>0,$ and jointly
continuous in $\left(  t,t_{0}\right)  $ for $t>t_{0}$.

In order to show that $\Gamma$ and $\frac{\partial^{\alpha+\beta}\Gamma
}{\partial x^{\alpha}\partial x_{0}^{\beta}}$ are jointly continuous in the
region $\mathbb{R}_{\ast}^{2N+2}$ (see (\ref{R star})) we also need to show
that these functions tend to zero as $\left(  x,t\right)  \rightarrow\left(
y,t_{0}^{+}\right)  $ and $y\neq x_{0}$.\ For $\Gamma$, this assertion follows
by Proposition \ref{Prop lim t0}: for $y\neq x_{0}$ and $\left(  x,t\right)
\rightarrow\left(  y,t_{0}^{+}\right)  $ we have
\[
\left\vert x-E\left(  t-t_{0}\right)  x_{0}\right\vert ^{2}\rightarrow
\left\vert y-x_{0}\right\vert ^{2}\neq0,
\]
hence
\[
\frac{1}{\left(  t-t_{0}\right)  ^{Q/2}}e^{-c_{2}\frac{\left\vert x-E\left(
t-t_{0}\right)  x_{0}\right\vert ^{2}}{t-t_{0}}}\rightarrow0
\]
and the same is true for $\Gamma\left(  x,t;x_{0},t_{0}\right)  .$

To prove the analogous assertion for $\frac{\partial^{\alpha+\beta}\Gamma
}{\partial x^{\alpha}\partial x_{0}^{\beta}}$ we first need to establish some
upper bounds for these derivatives, which will be useful several times in the following.

\begin{proposition}
\label{Prop comput der}For $t>s$, let $C\left(  t,s\right)  ^{-1}=\left\{
\gamma_{ij}\left(  t,s\right)  \right\}  _{i,j=1}^{N}$ , let
\[
C^{\prime}\left(  t,s\right)  =E\left(  t-s\right)  ^{T}C\left(  t,s\right)
^{-1}E\left(  t-s\right)
\]
and let $C^{\prime}\left(  t,s\right)  =\left\{  \gamma_{ij}^{\prime}\left(
t,s\right)  \right\}  _{i,j=1}^{N}.$ Then:

(i) For every $x,y\in\mathbb{R}^{N}$, every $t>s$, $k,h=1,2,...,N,$%
\begin{equation}
\partial_{x_{k}}\Gamma\left(  x,t;y,s\right)  =-\frac{1}{2}\Gamma\left(
x,t;y,s\right)  \cdot\sum_{i=1}^{N}\gamma_{ik}\left(  t,s\right)  \left(
x-E\left(  t-s\right)  y\right)  _{i} \label{der 1 x}%
\end{equation}%
\begin{align}
&  \partial_{x_{h}x_{k}}^{2}\Gamma\left(  x,t;y,s\right)  =\Gamma\left(
x,t;y,s\right)  \left(  \frac{1}{4}\left(  \sum_{i}\gamma_{ik}\left(
t,s\right)  \left(  x-E\left(  t-s\right)  y\right)  _{i}\right)  \cdot\right.
\label{der 2 x}\\
&  \cdot\left.  \left(  \sum_{j}\gamma_{jh}\left(  t,s\right)  \left(
x-E\left(  t-s\right)  y\right)  _{j}\right)  -\frac{1}{2}\gamma_{hk}\left(
t,s\right)  \right) \nonumber
\end{align}%
\begin{equation}
\partial_{y_{k}}\Gamma\left(  x,t;y,s\right)  =-\frac{1}{2}\Gamma\left(
x,t;y,s\right)  \cdot\sum_{i=1}^{N}\gamma_{ik}^{\prime}\left(  t,s\right)
\left(  y-E\left(  s-t\right)  x\right)  _{i} \label{der 1 y}%
\end{equation}%
\begin{align}
&  \partial_{y_{h}y_{k}}^{2}\Gamma\left(  x,t;y,s\right)  =\Gamma\left(
x,t;y,s\right)  \cdot\left(  \frac{1}{4}\left(  \sum_{i}\gamma_{ik}^{\prime
}\left(  t,s\right)  \left(  y-E\left(  s-t\right)  x\right)  _{i}\right)
\cdot\right. \label{der 2 y}\\
&  \cdot\left.  \left(  \sum_{j}\gamma_{jh}^{\prime}\left(  t,s\right)
\left(  y-E\left(  s-t\right)  x\right)  _{j}\right)  -\frac{1}{2}\gamma
_{hk}^{\prime}\left(  t,s\right)  \right)  .\nonumber
\end{align}

(ii) For every $n,m=0,1,2,...$ there exists $c>0$ such that for every
$x,y\in\mathbb{R}^{N}$, every $t>s$
\begin{align}
&  \sum_{\left\vert \alpha\right\vert \leq n,\left\vert \beta\right\vert \leq
m}\left\vert \partial_{x}^{\alpha}\partial_{y}^{\beta}\Gamma\left(
x,t;y,s\right)  \right\vert \nonumber\\
&  \leq c\Gamma\left(  x,t;y,s\right)  \cdot\left\{  1+\left\Vert C\left(
t,s\right)  ^{-1}\right\Vert +\left\Vert C\left(  t,s\right)  ^{-1}\right\Vert
^{n}\left\vert x-E\left(  t-s\right)  y\right\vert ^{n}\right\} \nonumber\\
&  \cdot\left\{  1+\left\Vert C^{\prime}\left(  t,s\right)  \right\Vert
+\left\Vert C^{\prime}\left(  t,s\right)  \right\Vert ^{m}\left\vert
y-E\left(  s-t\right)  x\right\vert ^{m}\right\}  \label{higher order der G}%
\end{align}
where $\left\Vert \cdot\right\Vert $ stands for a matrix norm.
\end{proposition}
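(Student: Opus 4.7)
The proof is essentially direct differentiation for part (i) and an inductive bookkeeping for part (ii). The nontrivial ingredient is a Cauchy–Schwarz estimate that controls the ``mixed'' constant factors that appear when an $x$-derivative is applied to a previously-created $y$-factor (or vice versa).

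\medskip

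\emph{Step 1 (part (i)).} Write $\Gamma(x,t;y,s) = K(t,s)\,e^{-\frac14 Q(x,y)}$ where $K(t,s) = (4\pi)^{-N/2}(\det C(t,s))^{-1/2} e^{-(t-s)\operatorname{Tr}B}$ and $Q(x,y) = (x-E(t-s)y)^{T} C(t,s)^{-1}(x-E(t-s)y)$. Since $C(t,s)^{-1}$ is symmetric, $\partial_{x_k} Q = 2\sum_i \gamma_{ik}(t,s)(x-E(t-s)y)_i$, and the chain rule produces (\ref{der 1 x}); differentiating once more, using $\partial_{x_h}(x-E(t-s)y)_i = \delta_{ih}$, gives (\ref{der 2 x}). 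For the $y$-derivatives I would exploit the identity $x-E(t-s)y = E(t-s)(E(s-t)x - y)$ (which follows from $E(t-s)E(s-t)=I$) to rewrite
\[
Q(x,y) = (y-E(s-t)x)^{T}\, C'(t,s)\,(y-E(s-t)x).
\]
In this form the roles of $x$ and $y$ are interchanged with $C^{-1}$ replaced by $C'$, and the computations giving (\ref{der 1 y})–(\ref{der 2 y}) are a mirror image of those for $x$.

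\medskip

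\emph{Step 2 (setup for part (ii)).} I would argue by induction on $n+m$. Each successive differentiation of $\Gamma$ produces one of four elementary factors: (a) an ``$x$-linear'' factor $L_k = -\tfrac12\sum_i \gamma_{ik}(x-E(t-s)y)_i$, of size $\le \tfrac12\|C^{-1}\|\,|x-E(t-s)y|$; (b) a ``$y$-linear'' factor $L'_h = -\tfrac12\sum_i \gamma'_{ih}(y-E(s-t)x)_i$, of size $\le \tfrac12\|C'\|\,|y-E(s-t)x|$; (c) a ``self-contraction'' constant $-\tfrac12\gamma_{hk}$ or $-\tfrac12\gamma'_{hk}$, of size $\le \tfrac12\|C^{-1}\|$ or $\tfrac12\|C'\|$; (d) a ``mixed'' constant, of the form $\tfrac12(C^{-1}E(t-s))_{kh}$ (produced when $\partial_{y_h}$ hits an $L_k$) or the symmetric variant. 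Using Leibniz's rule, $\partial_x^\alpha\partial_y^\beta\Gamma$ is $\Gamma$ times a finite sum of products of such factors, with at most $n=|\alpha|$ factors involving $x$-derivatives and $m=|\beta|$ factors involving $y$-derivatives.

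\medskip

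\emph{Step 3 (the key estimate and conclusion).} The only point that needs care is the size of the mixed factors of type (d). Writing $(C^{-1}E(t-s))_{kh} = \langle C^{-1/2} e_k,\, C^{-1/2} E(t-s) e_h\rangle$ and using Cauchy–Schwarz in the standard inner product together with $E(t-s)^{T} C^{-1} E(t-s) = C'$, one gets
\[
\bigl|(C^{-1}E(t-s))_{kh}\bigr| \le \sqrt{(C^{-1})_{kk}\,(C')_{hh}} \le \sqrt{\|C^{-1}\|\,\|C'\|} \le \tfrac12\bigl(\|C^{-1}\|+\|C'\|\bigr),
\]
and the symmetric mixed factor satisfies the same bound (in fact equals the other up to transposition, since $C^{-1}E(t-s)=E(s-t)^{T}C'$). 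Consequently every term in the Leibniz expansion is majorised by $\Gamma$ times a product of at most $n$ factors of type $\{\|C^{-1}\|,\ \|C^{-1}\||x-E(t-s)y|\}$ and $m$ factors of type $\{\|C'\|,\ \|C'\||y-E(s-t)x|\}$ (mixed factors being absorbed into either bracket via the AM–GM step). Estimating intermediate powers by the two extreme ones ($1$, $\|C^{-1}\|$, $\|C^{-1}\|^{n}|x-E(t-s)y|^{n}$, respectively $1$, $\|C'\|$, $\|C'\|^m|y-E(s-t)x|^m$) through elementary inequalities of the form $a^k\le 1+a^n$ and $a^k b^k\le (1+a^n b^n)(1+a)$, one obtains (\ref{higher order der G}). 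The main obstacle is precisely the treatment of the mixed factors of type (d), which is why I would isolate the Cauchy–Schwarz step at the start of Step 3; the rest is bookkeeping.
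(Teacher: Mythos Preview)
Your proof of part (i) is exactly the paper's: direct differentiation for the $x$-derivatives, then the rewriting $Q(x,y)=(y-E(s-t)x)^{T}C'(t,s)(y-E(s-t)x)$ to reduce the $y$-derivatives to a mirror-image computation.

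For part (ii) the paper is extremely terse: it simply says that iterating the computation of (i) gives the pure-$x$ bound, the pure-$y$ bound, ``and finally also'' the mixed estimate (\ref{higher order der G}). Your proposal goes beyond this and isolates the one genuinely nontrivial point: when a $y$-derivative hits an $x$-linear factor $L_k$ (or vice versa) one picks up a constant of the form $(C^{-1}E(t-s))_{kh}$, whose size is not \emph{a priori} controlled by $\|C^{-1}\|$ or $\|C'\|$ alone (bounding it by $\|C^{-1}\|\cdot\|E(t-s)\|$ would introduce an extraneous $\|E(t-s)\|$). Your Cauchy--Schwarz computation
\[
\bigl|(C^{-1}E(t-s))_{kh}\bigr|=\bigl|\langle C^{-1/2}e_k,\,C^{-1/2}E(t-s)e_h\rangle\bigr|\le\sqrt{(C^{-1})_{kk}(C')_{hh}}\le\sqrt{\|C^{-1}\|\,\|C'\|}
\]
is a clean way to get exactly the factorised form in (\ref{higher order der G}), and is a detail the paper's proof does not supply. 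So your argument follows the paper's outline but fills in a step the paper leaves implicit; what you gain is an honest justification of why the mixed derivatives do not force an extra $\|E(t-s)\|$ into the estimate.
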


\begin{proof}
A straightforward computation gives (\ref{der 1 x}) and (\ref{der 2 x}).
Iterating this computation we can also bound%
\begin{align*}
&  \sum_{\left\vert \alpha\right\vert \leq n}\left\vert \partial_{x}^{\alpha
}\Gamma\left(  x,t;y,s\right)  \right\vert \leq c\Gamma\left(  x,t;y,s\right)
\cdot\\
&  \cdot\left\{  1+\left\Vert C\left(  t,s\right)  ^{-1}\right\Vert
+\left\Vert C\left(  t,s\right)  ^{-1}\right\Vert ^{n}\left\vert x-E\left(
t-s\right)  y\right\vert ^{n}\right\}  .
\end{align*}

To compute $y$-derivatives of $\Gamma$, it is convenient to write%
\begin{align*}
&  \left(  x-E\left(  t-s\right)  y\right)  ^{T}C\left(  t,s\right)
^{-1}\left(  x-E\left(  t-s\right)  y\right) \\
&  =\left(  y-E\left(  s-t\right)  x\right)  ^{T}C^{\prime}\left(  t,s\right)
\left(  y-E\left(  s-t\right)  x\right)
\end{align*}
with%
\[
C^{\prime}\left(  t,s\right)  =E\left(  t-s\right)  ^{T}C\left(  t,s\right)
^{-1}E\left(  t-s\right)  .
\]
With this notation, we have%
\[
\Gamma\left(  x,t;y,s\right)  =\frac{1}{\left(  4\pi\right)  ^{N/2}\sqrt{\det
C\left(  t,s\right)  }}e^{-\left(  \frac{1}{4}\left(  y-E\left(  s-t\right)
x\right)  ^{T}C^{\prime}\left(  t,s\right)  \left(  y-E\left(  s-t\right)
x\right)  +\left(  t-s\right)  \operatorname*{Tr}B\right)  }%
\]
and an analogous computation gives (\ref{der 1 y}), (\ref{der 2 y}) and, by
iteration%
\begin{align*}
&  \sum_{\left\vert \alpha\right\vert \leq m}\left\vert \partial_{y}^{\alpha
}\Gamma\left(  x,t;y,s\right)  \right\vert \leq c\Gamma\left(  x,t;y,s\right)
\cdot\\
&  \cdot\left\{  1+\left\Vert C^{\prime}\left(  t,s\right)  \right\Vert
+\left\Vert C^{\prime}\left(  t,s\right)  \right\Vert ^{m}\left\vert
y-E\left(  s-t\right)  x\right\vert ^{m}\right\}
\end{align*}
and finally also (\ref{higher order der G}).
\end{proof}

With the previous bounds in hands we can now prove the following:

\begin{theorem}
[Upper bounds on the derivatives of $\Gamma$]\label{Thm bounds derivatives}(i)
For every $n,m=0,1,2...$ and $t,s$ ranging in a compact subset of $\left\{
\left(  t,s\right)  :t\geq s+\varepsilon\right\}  $ for some $\varepsilon>0$
we have%
\begin{align}
&  \sum_{\left\vert \alpha\right\vert \leq n,\left\vert \beta\right\vert \leq
m}\left\vert \partial_{x}^{\alpha}\partial_{y}^{\beta}\Gamma\left(
x,t;y,s\right)  \right\vert \label{global bounds der}\\
&  \leq Ce^{-C^{\prime}\left\vert x-E\left(  t-s\right)  y\right\vert ^{2}%
}\cdot\left\{  1+\left\vert x-E\left(  t-s\right)  y\right\vert ^{n}%
+\left\vert y-E\left(  s-t\right)  x\right\vert ^{m}\right\} \nonumber
\end{align}
for every $x,y\in\mathbb{R}^{N}$, for constants $C,C^{\prime}$ depending on
$n,m$ and the compact set.

In particular, for fixed $t>s$ we have%
\begin{align*}
\lim_{\left\vert x\right\vert \rightarrow+\infty}\partial_{x}^{\alpha}%
\partial_{y}^{\beta}\Gamma\left(  x,t;y,s\right)   &  =0\text{ for every }%
y\in\mathbb{R}^{N}\\
\lim_{\left\vert y\right\vert \rightarrow+\infty}\partial_{x}^{\alpha}%
\partial_{y}^{\beta}\Gamma\left(  x,t;y,s\right)   &  =0\text{ for every }%
x\in\mathbb{R}^{N}%
\end{align*}
for every multiindices $\alpha,\beta$.

(ii) For every $n,m=0,1,2...$ there exists $\delta\in\left(  0,1\right)
,C,c>0$ such that for $0<t-s<\delta$ and every $x,y\in\mathbb{R}^{N}$ we have%
\begin{align}
&  \sum_{\left\vert \alpha\right\vert \leq n,\left\vert \beta\right\vert \leq
m}\left\vert \partial_{x}^{\alpha}\partial_{y}^{\beta}\Gamma\left(
x,t;y,s\right)  \right\vert \nonumber\\
&  \leq\frac{C}{\left(  t-s\right)  ^{Q/2}}e^{-c\frac{\left\vert x-E\left(
t-s\right)  y\right\vert ^{2}}{t-s}}\cdot\left\{  \left(  t-s\right)
^{-\sigma_{N}}+\left(  t-s\right)  ^{-n\sigma_{N}}\left\vert x-E\left(
t-s\right)  y\right\vert ^{n}\right\} \nonumber\\
&  \cdot\left\{  \left(  t-s\right)  ^{-\sigma_{N}}+\left(  t-s\right)
^{-m\sigma_{N}}\left\vert y-E\left(  s-t\right)  x\right\vert ^{m}\right\}  .
\label{bound der small t}%
\end{align}

In particular, for every fixed $x_{0},y\in\mathbb{R}^{N},$ $x_{0}\neq
y,s\in\mathbb{R},$%
\[
\lim_{\left(  x,t\right)  \rightarrow\left(  x_{0},s^{+}\right)  }%
\sum_{\left\vert \alpha\right\vert +\left\vert \beta\right\vert \leq
k}\left\vert \partial_{x}^{\alpha}\partial_{y}^{\beta}\Gamma\left(
x,t;y,s\right)  \right\vert =0
\]
so that $\Gamma$ and $\partial_{x}^{\alpha}\partial_{y}^{\beta}\Gamma\left(
x,t;y,s\right)  $ are jointly continuous in the region $\mathbb{R}_{\ast
}^{2N+2}.$
\end{theorem}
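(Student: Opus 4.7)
The plan is to derive both parts from the pointwise identity \eqref{higher order der G} of Proposition \ref{Prop comput der}, which expresses every mixed derivative of $\Gamma$ as $\Gamma$ itself multiplied by a polynomial factor in $|x-E(t-s)y|$ and $|y-E(s-t)x|$ whose coefficients are controlled by the matrix norms $\|C(t,s)^{-1}\|$ and $\|C'(t,s)\|$. Hence the two parts reduce to quantitative estimates of those two matrix norms, together with an upper bound for $\Gamma$ of the correct Gaussian shape.

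For part (i), on a compact set $K\subset\{t\geq s+\varepsilon\}$ the entries of $C(t,s)$ are jointly continuous in $(t,s)$, and by Proposition \ref{Prop fq C C0} the matrix $C(t,s)$ is uniformly positive definite. Therefore $\|C(t,s)^{-1}\|$ and $\|C'(t,s)\|=\|E(t-s)^{T}C(t,s)^{-1}E(t-s)\|$ are uniformly bounded on $K$, and the smallest eigenvalue of $C(t,s)^{-1}$ is uniformly bounded below by some $C'>0$. The prefactor $(4\pi)^{-N/2}(\det C(t,s))^{-1/2}$ is uniformly bounded above on $K$, as is $e^{-(t-s)\operatorname{Tr}B}$. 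Plugging these uniform bounds into \eqref{higher order der G} and absorbing the factor $\|C(t,s)^{-1}\|^{n}\|C'(t,s)\|^{m}$ into the constant gives \eqref{global bounds der}. The limits as $|x|\to\infty$ or $|y|\to\infty$ follow because, for fixed $t>s$ and the other of $x,y$ fixed, both $|x-E(t-s)y|$ and $|y-E(s-t)x|$ tend to $+\infty$, so the Gaussian factor crushes any polynomial.

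For part (ii), the required short-time upper bound on $\Gamma$ itself is exactly the one already supplied by Proposition \ref{Prop lim t0}. It remains to estimate the two matrix norms for small $t-s$. By Proposition \ref{Prop inverse matrix}, $\|C(t,s)^{-1}\|\leq\nu^{-1}\|C_{0}(t-s)^{-1}\|$, and by Proposition \ref{Prop LP}, $C_{0}(t)^{-1}=C_{0}^{\ast}(t)^{-1}(1+tO(1))$ as $t\to 0^{+}$, with $C_{0}^{\ast}(t)^{-1}=D_{0}(t^{-1/2})C_{0}^{\ast}(1)^{-1}D_{0}(t^{-1/2})$. Since $D_{0}(t^{-1/2})=\operatorname{diag}(t^{-\sigma_{1}/2},\ldots,t^{-\sigma_{N}/2})$ and $\sigma_{N}$ is the largest exponent, one gets $\|C_{0}(t-s)^{-1}\|\leq C(t-s)^{-\sigma_{N}}$ for $0<t-s\leq\delta$. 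Because $\|E(t-s)\|$ is bounded for small $t-s$, the same bound controls $\|C'(t,s)\|$. Substituting these bounds together with the upper bound of Proposition \ref{Prop lim t0} into \eqref{higher order der G} gives \eqref{bound der small t}.

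The joint continuity of $\partial_{x}^{\alpha}\partial_{y}^{\beta}\Gamma$ on $\mathbb{R}_{\ast}^{2N+2}$ is immediate everywhere off $\{t=s\}$ from the explicit formulas in Proposition \ref{Prop comput der} and the continuity of $C(t,s)^{-1}$ and $C'(t,s)$ for $t>s$. At a point $(x_{0},s,y,s)$ with $x_{0}\neq y$, the bound \eqref{bound der small t} applies and the factor $\exp(-c|x-E(t-s)y|^{2}/(t-s))$ dominates any negative power of $(t-s)$ since $|x-E(t-s)y|\to|x_{0}-y|>0$, yielding the limit $0$. The step that requires the most care is the sharp estimate $\|C_{0}(t)^{-1}\|\lesssim t^{-\sigma_{N}}$, because this is where the precise algebraic structure from \cite{LP}, encoded in the dilation matrix $D_{0}$, is genuinely used; once this is in hand, everything else is a bookkeeping combination of the already established estimates.
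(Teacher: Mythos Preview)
Your proposal is correct and follows essentially the same route as the paper: both arguments feed the bound \eqref{higher order der G} from Proposition~\ref{Prop comput der} with uniform control of $\|C(t,s)^{-1}\|$, $\|C'(t,s)\|$ (via continuity and positivity on compacta for part~(i), and via Propositions~\ref{Prop inverse matrix} and~\ref{Prop LP} yielding the $(t-s)^{-\sigma_N}$ estimate for part~(ii)), together with the Gaussian upper bound for $\Gamma$ from Proposition~\ref{Prop lim t0}, and then read off the limits exactly as you do.
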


\begin{proof}
(i) The matrix $C\left(  t,s\right)  $ is jointly continuous in $\left(
t,s\right)  $ and, by Proposition \ref{Prop fq C C0} is positive definite for
any $t>s$. Hence for $t,s$ ranging in a compact subset of $\left\{  \left(
t,s\right)  :t\geq s+\varepsilon\right\}  $ we have
\begin{align*}
\left\Vert C\left(  t,s\right)  ^{-1}\right\Vert ^{n}+\left\Vert C^{\prime
}\left(  t,s\right)  \right\Vert ^{m}  &  \leq c\\
e^{-\left(  \frac{1}{4}\left(  x-E\left(  t-s\right)  y\right)  ^{T}C\left(
t,s\right)  ^{-1}\left(  x-E\left(  t-s\right)  y\right)  +\left(  t-s\right)
\operatorname*{Tr}B\right)  }  &  \leq c_{1}e^{-c\left\vert x-E\left(
t-s\right)  y\right\vert ^{2}}%
\end{align*}
for some $c,c_{1}>0$ only depending on $n,m$ and the compact set. Hence by
(\ref{higher order der G}) and (\ref{Gamma}) we get (\ref{global bounds der}).

Let now $t,s$ be fixed. If $y$ is fixed and $\left\vert x\right\vert
\rightarrow\infty$ then (\ref{global bounds der}) gives%
\[
\sum_{\left\vert \alpha\right\vert \leq n,\left\vert \beta\right\vert \leq
m}\left\vert \partial_{x}^{\alpha}\partial_{y}^{\beta}\Gamma\left(
x,t;y,s\right)  \right\vert \leq Ce^{-C^{\prime}\left\vert x\right\vert ^{2}%
}\left\{  1+\left\vert x\right\vert ^{n}+\left\vert x\right\vert ^{m}\right\}
\rightarrow0.
\]
If $x$ is fixed and $\left\vert y\right\vert \rightarrow\infty,$%
\begin{align*}
&  \sum_{\left\vert \alpha\right\vert \leq n,\left\vert \beta\right\vert \leq
m}\left\vert \partial_{x}^{\alpha}\partial_{y}^{\beta}\Gamma\left(
x,t;y,s\right)  \right\vert \\
&  \leq Ce^{-C^{\prime}\left\vert E\left(  t-s\right)  y\right\vert ^{2}%
}\left\{  1+\left\vert E\left(  t-s\right)  y\right\vert ^{n}+\left\vert
E\left(  s-t\right)  x\right\vert ^{m}\right\}  \rightarrow0,
\end{align*}
because when $\left\vert y\right\vert \rightarrow\infty$ also $\left\vert
E\left(  t-s\right)  y\right\vert \rightarrow\infty$, since $E\left(
t-s\right)  $ is invertible.

(ii) Applying (\ref{higher order der G}) together with Proposition
\ref{Prop lim t0} we get that for some $\delta\in\left(  0,1\right)  $,
whenever $0<t-s<\delta$ we have%
\begin{align*}
&  \sum_{\left\vert \alpha\right\vert \leq n,\left\vert \beta\right\vert \leq
m}\left\vert \partial_{x}^{\alpha}\partial_{y}^{\beta}\Gamma\left(
x,t;y,s\right)  \right\vert \\
&  \leq\frac{1}{c\left(  t-s\right)  ^{Q/2}}e^{-c\frac{\left\vert x-E\left(
t-s\right)  y\right\vert ^{2}}{t-s}}\cdot\left\{  1+\left\Vert C\left(
t,s\right)  ^{-1}\right\Vert +\left\Vert C\left(  t,s\right)  ^{-1}\right\Vert
^{n}\left\vert x-E\left(  t-s\right)  y\right\vert ^{n}\right\} \\
&  \cdot\left\{  1+\left\Vert C^{\prime}\left(  t,s\right)  \right\Vert
+\left\Vert C^{\prime}\left(  t,s\right)  \right\Vert ^{m}\left\vert
y-E\left(  s-t\right)  x\right\vert ^{m}\right\}  .
\end{align*}
Next, we recall that by Proposition \ref{Prop inverse matrix} we have%
\[
\left\Vert C\left(  t,s\right)  ^{-1}\right\Vert \leq c\left\Vert C_{0}\left(
t-s\right)  ^{-1}\right\Vert
\]
by Proposition \ref{Prop LP}, for $0<t-s\leq\delta$
\[
\leq c^{\prime}\left\Vert C_{0}^{\ast}\left(  t-s\right)  ^{-1}\right\Vert
\leq c^{\prime\prime}\left(  t-s\right)  ^{-\sigma_{N}}%
\]
and an analogous bound holds for $C^{\prime}\left(  t,s\right)  $, for small
$\left(  t-s\right)  $. Hence we get (\ref{bound der small t}).

If now $x_{0}\neq y$ are fixed, from (\ref{bound der small t}) we deduce%
\[
\sum_{\left\vert \alpha\right\vert \leq n,\left\vert \beta\right\vert \leq
m}\left\vert \partial_{x}^{\alpha}\partial_{y}^{\beta}\Gamma\left(
x,t;y,s\right)  \right\vert \leq\frac{C}{\left(  t-s\right)  ^{\frac{Q}%
{2}+\left(  n+m\right)  \sigma_{N}}}\exp\left(  -\frac{c}{t-s}\right)
\rightarrow0
\]
as $\left(  x,t\right)  \rightarrow\left(  x_{0},s^{+}\right)  .$
\end{proof}

With the above theorem, the proof of point (i) in Theorem \ref{Thm main} is complete.

\begin{remark}
[Long time behavior of $\Gamma$]We have shown that the fundamental solution
$\Gamma\left(  x,t;y,s\right)  $ and its spacial derivatives of every order
tend to zero for $x$ or $y$ going to infinity, and tend to zero for
$t\rightarrow s^{+}$ and $x\neq y$. It is natural to ask what happens for
$t\rightarrow+\infty$. However, nothing can be said in general about this
limit, even when the coefficients $a_{ij}$ are constant, and even in
nondegenerate cases.\ Compare, for $N=1$, the heat operator%
\[
Hu=u_{xx}-u_{t},
\]
for which%
\[
\Gamma\left(  x,y;0,0\right)  =\frac{1}{\sqrt{4\pi t}}e^{-\frac{x^{2}}{4t}%
}\rightarrow0\text{ for }t\rightarrow+\infty\text{, every }x\in\mathbb{R}%
\]
and the operator%
\[
Lu=u_{xx}+xu_{x}-u_{t}%
\]
for which (\ref{Gamma}) gives%
\[
\Gamma\left(  x,t;0,0\right)  =\frac{1}{\sqrt{2\pi\left(  1-e^{-2t}\right)  }%
}e^{-\frac{x^{2}}{2\left(  1-e^{-2t}\right)  }}\rightarrow\frac{1}{\sqrt{2\pi
}}e^{-\frac{x^{2}}{2}}\text{ as }t\rightarrow+\infty.
\]

\end{remark}

\subsection{$\Gamma$ is a solution\label{sec solves equation}}

In this section we will prove points (ii), (iii), (vi) of Theorem
\ref{Thm main}.

We want to check that our \textquotedblleft candidate fundamental
solution\textquotedblright\ with pole at $\left(  x_{0},t_{0}\right)  $, given
by (\ref{Gamma}), actually solves the equation outside the pole, with respect
to $\left(  x,t\right)  $. Note that, by the results in
\S \ \ref{sec regularity} we already know that $\Gamma$ is infinitely
differentiable w.r.t. $x,x_{0}$, and a.e. differentiable w.r.t. $t,t_{0}$.

\begin{theorem}
\label{Thm LG=0}For every fixed $\left(  x_{0},t_{0}\right)  \in
\mathbb{R}^{N+1}$,
\[
\mathcal{L}\left(  \Gamma\left(  \cdot,\cdot;x_{0},t_{0}\right)  \right)
\left(  x,t\right)  =0\text{ for a.e. }t>t_{0}\text{ and every }x\in
\mathbb{R}^{N}\text{.}%
\]

\end{theorem}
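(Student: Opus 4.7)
The approach is a direct verification: write out $\sum_{i,j=1}^{q}a_{ij}(t)\partial_{x_{i}x_{j}}^{2}\Gamma$, $\sum_{k,j=1}^{N}b_{jk}x_{k}\partial_{x_{j}}\Gamma$, and $\partial_{t}\Gamma$ explicitly, and check that they sum to zero. By the regularity results of \S\ref{sec regularity} the spatial derivatives exist classically and the $t$-derivative exists for a.e.\ $t$, which is exactly the sense required by Definition \ref{Def solution}. Setting $\tilde{x}:=x-E(t-t_{0})x_{0}$ and using Proposition \ref{Prop comput der}, one gets immediately
\[
\sum_{i,j=1}^{q}a_{ij}(t)\partial_{x_{i}x_{j}}^{2}\Gamma = \Gamma\Bigl[\tfrac{1}{4}\tilde{x}^{T}C(t,t_{0})^{-1}A(t)C(t,t_{0})^{-1}\tilde{x}-\tfrac{1}{2}\operatorname*{Tr}(A(t)C(t,t_{0})^{-1})\Bigr]
\]
and $\sum_{k,j}b_{jk}x_{k}\partial_{x_{j}}\Gamma = -\tfrac{1}{2}\Gamma\,(Bx)^{T}C(t,t_{0})^{-1}\tilde{x}$, while a standard Gaussian computation produces
\[
\partial_{t}\Gamma = \Gamma\Bigl[-\tfrac{1}{2}\operatorname*{Tr}(C^{-1}\dot{C})-\tfrac{1}{2}(\partial_{t}\tilde{x})^{T}C^{-1}\tilde{x}+\tfrac{1}{4}\tilde{x}^{T}C^{-1}\dot{C}\,C^{-1}\tilde{x}-\operatorname*{Tr}B\Bigr],
\]
where I write $C=C(t,t_{0})$ for brevity and use $\partial_{t}\tilde{x}=BE(t-t_{0})x_{0}=B(x-\tilde{x})$.

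The crucial identity, which must hold for a.e.\ $t$, is
\[
\dot{C}(t,t_{0}) = A(t)-B\,C(t,t_{0})-C(t,t_{0})\,B^{T}.
\]
I would prove it by computing the incremental ratio of $C(t,t_{0})=\int_{t_{0}}^{t}E(t-\sigma)A(\sigma)E(t-\sigma)^{T}d\sigma$ in $t$ and splitting it into two parts. The ``interior'' part, in which only the smooth factors $E(\cdot)$ are differentiated, converges by dominated convergence to $-BC-CB^{T}$. The ``boundary'' part $h^{-1}\int_{t}^{t+h}E(t+h-\sigma)A(\sigma)E(t+h-\sigma)^{T}\,d\sigma$ tends to $A(t)$ at every Lebesgue point of $A$, since $E(t+h-\sigma)\to I$ uniformly for $\sigma\in(t,t+h)$ as $h\to 0$.

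Once this identity is available, the cancellation in $\mathcal{L}\Gamma=0$ reduces to three elementary checks, after dividing by $\Gamma$. The $\tilde{x}$-independent terms simplify using $\operatorname*{Tr}(C^{-1}\dot{C})=\operatorname*{Tr}(C^{-1}A)-2\operatorname*{Tr}B$ (the cyclic property of the trace), which yields $-\tfrac{1}{2}\operatorname*{Tr}(AC^{-1})+\tfrac{1}{2}\operatorname*{Tr}(C^{-1}\dot{C})+\operatorname*{Tr}B=0$. The quadratic-in-$\tilde{x}$ terms collapse to $\tfrac{1}{4}\tilde{x}^{T}C^{-1}(A-\dot{C})C^{-1}\tilde{x}=\tfrac{1}{4}\tilde{x}^{T}(C^{-1}B+B^{T}C^{-1})\tilde{x}$, which by symmetry of $C^{-1}$ equals $\tfrac{1}{2}(B\tilde{x})^{T}C^{-1}\tilde{x}$. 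The linear-in-$\tilde{x}$ terms, using $\partial_{t}\tilde{x}=B(x-\tilde{x})$, give $-\tfrac{1}{2}(Bx)^{T}C^{-1}\tilde{x}+\tfrac{1}{2}(Bx)^{T}C^{-1}\tilde{x}-\tfrac{1}{2}(B\tilde{x})^{T}C^{-1}\tilde{x}$, exactly cancelling the remaining $\tfrac{1}{2}(B\tilde{x})^{T}C^{-1}\tilde{x}$. The only real obstacle is the $t$-regularity issue at the level of $\dot{C}$: since $A(\cdot)$ is merely measurable, the formula for $\dot{C}$ is valid only almost everywhere, and one must extract $A(t)$ from the incremental ratio through Lebesgue differentiation rather than by a pointwise application of the fundamental theorem of calculus. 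Every other step of the argument is algebraic matrix manipulation.
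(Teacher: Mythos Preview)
Your proof is correct, and in fact your handling of the scalar terms is more direct than the paper's. Both arguments compute the spatial derivatives of $\Gamma$ from Proposition~\ref{Prop comput der}, establish $\dot C(t,t_0)=A(t)-BC-CB^T$ for a.e.\ $t$ (the paper states this directly; your Lebesgue-point argument is a valid justification), and use $\partial_t(C^{-1})=-C^{-1}\dot C\,C^{-1}$. The divergence comes at the ``constant'' terms: you invoke Jacobi's formula $\partial_t(\log\det C)=\operatorname{Tr}(C^{-1}\dot C)$ and then read off $\operatorname{Tr}(C^{-1}\dot C)=\operatorname{Tr}(C^{-1}A)-2\operatorname{Tr}B$ immediately from the expression for $\dot C$ and the cyclicity of the trace, closing the computation in one line. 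The paper instead leaves $\partial_t(\det C)/\det C$ unevaluated, reduces $\mathcal L\Gamma$ to $\Gamma\bigl\{-\tfrac12\operatorname{Tr}(AC^{-1})+\tfrac{\partial_t(\det C)}{2\det C}+\operatorname{Tr}B\bigr\}$, and then proves the needed identity for $\partial_t(\det C)/\det C$ as a separate proposition (Proposition~\ref{Prop Ddet}), obtained by differentiating the mass identity $\int_{\mathbb R^N}\Gamma\,dx=e^{-(t-t_0)\operatorname{Tr}B}$ under the integral sign and computing Gaussian moments (Lemma~\ref{Lemma Gauss xAx}). Your route is shorter and purely algebraic; the paper's route has the side benefit of producing Proposition~\ref{Prop Ddet} in a form that is reused verbatim for the transposed equation (Proposition~\ref{Prop Ddet2}).
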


Before proving the theorem, let us establish the following easy fact, which
will be useful in the subsequent computation and is also interesting in its own:

\begin{proposition}
\label{Prop integral Gamma dx}For every $t>t_{0}$ and $x_{0}\in\mathbb{R}^{N}$
we have
\begin{align}
\int_{\mathbb{R}^{N}}\Gamma\left(  x,t;x_{0},t_{0}\right)  dx  &  =e^{-\left(
t-t_{0}\right)  \operatorname*{Tr}B}\label{int Gamma tr}\\
\int_{\mathbb{R}^{N}}\Gamma\left(  x_{0},t;y,t_{0}\right)  dy  &  =1.\nonumber
\end{align}

\end{proposition}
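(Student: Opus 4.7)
Both identities are pure Gaussian integrations from the explicit formula (\ref{Gamma}); the main task is to rewrite the exponent as a quadratic form in the appropriate integration variable and keep track of determinants.

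For the first identity I would substitute $z=x-E(t-t_0)x_0$, pulling the factor $e^{-(t-t_0)\operatorname{Tr}B}$ out of the integral, to reduce to
\[
\int_{\mathbb{R}^{N}}\frac{1}{(4\pi)^{N/2}\sqrt{\det C(t,t_0)}}\,e^{-\frac14 z^{T}C(t,t_0)^{-1}z}\,dz.
\]
Since $C(t,t_0)$ is symmetric positive (Proposition \ref{Prop fq C C0}), the standard formula $\int_{\mathbb{R}^N}e^{-\frac14 z^T Q^{-1}z}\,dz=(4\pi)^{N/2}\sqrt{\det Q}$ (a restatement of Proposition \ref{Prop Gauss transform} applied to $\frac14 C(t,t_0)^{-1}$ at $\xi=0$) shows this integral equals $1$. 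This yields $\int\Gamma\,dx=e^{-(t-t_0)\operatorname{Tr}B}$. Equivalently, one may note that $\int\Gamma\,dx=\widehat{\Gamma}(0,t;x_0,t_0)=G(0,t;x_0,t_0)$ with $G$ as in (\ref{G G0}), which immediately gives the same value.

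For the second identity the integration variable is now $y$, which appears inside $E(t-t_0)y$. I would use the algebraic identity already introduced in the proof of Proposition \ref{Prop comput der}, namely
\[
(x_0-E(t-t_0)y)^{T}C(t,t_0)^{-1}(x_0-E(t-t_0)y)=(y-E(t_0-t)x_0)^{T}C'(t,t_0)(y-E(t_0-t)x_0),
\]
where $C'(t,t_0)=E(t-t_0)^{T}C(t,t_0)^{-1}E(t-t_0)$. Substituting $w=y-E(t_0-t)x_0$ then produces the Gaussian integral
\[
\int_{\mathbb{R}^N} e^{-\frac14 w^{T}C'(t,t_0)w}\,dw=\frac{(4\pi)^{N/2}}{\sqrt{\det C'(t,t_0)}}.
\]

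The main (mild) obstacle is verifying the cancellation of the determinant and exponential prefactors. Writing $E(s)=\exp(-sB)$ gives $\det E(s)=e^{-s\operatorname{Tr}B}$, whence
\[
\det C'(t,t_0)=\frac{\det E(t-t_0)^{2}}{\det C(t,t_0)}=\frac{e^{-2(t-t_0)\operatorname{Tr}B}}{\det C(t,t_0)}.
\]
Multiplying the Gaussian integral by the normalization constant $\frac{1}{(4\pi)^{N/2}\sqrt{\det C(t,t_0)}}\,e^{-(t-t_0)\operatorname{Tr}B}$ from (\ref{Gamma}), the factors of $\det C(t,t_0)$ cancel and the exponential $e^{-(t-t_0)\operatorname{Tr}B}$ is exactly compensated by $1/\sqrt{\det C'(t,t_0)}=e^{(t-t_0)\operatorname{Tr}B}\sqrt{\det C(t,t_0)}$, yielding $1$. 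This completes both identities.
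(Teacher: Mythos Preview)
Your proof is correct and follows essentially the same approach as the paper: both identities are reduced to standard Gaussian integrals, and the crucial cancellation in the second identity comes from $\det E(s)=e^{-s\operatorname{Tr}B}$, which the paper invokes as $\det(\exp B)=e^{\operatorname{Tr}B}$. The only cosmetic difference is that the paper performs a single change of variables involving $C(t,t_0)^{1/2}$ to reduce directly to $\int e^{-|y|^2}\,dy$, whereas you translate first and then quote the Gaussian normalization formula (and, for the $dy$-integral, pass through the matrix $C'(t,t_0)$); these are equivalent bookkeeping choices.
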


\begin{proof}
Let us compute, for $t>t_{0}$:%
\begin{align*}
&  \int_{\mathbb{R}^{N}}\Gamma\left(  x,t;x_{0},t_{0}\right)  dx\\
&  =\frac{e^{-\left(  t-t_{0}\right)  \operatorname*{Tr}B}}{\left(
4\pi\right)  ^{N/2}\sqrt{\det C\left(  t,t_{0}\right)  }}\int_{\mathbb{R}^{N}%
}e^{-\frac{1}{4}\left(  x-E\left(  t-t_{0}\right)  x_{0}\right)  ^{T}%
C^{-1}\left(  t,t_{0}\right)  \left(  x-E\left(  t-t_{0}\right)  x_{0}\right)
}dx\\
\text{letting }x  &  =E\left(  t-t_{0}\right)  x_{0}+2C\left(  t,t_{0}\right)
^{1/2}y;dx=2^{N}\det C\left(  t,t_{0}\right)  ^{1/2}dy\\
&  =\frac{e^{-\left(  t-t_{0}\right)  \operatorname*{Tr}B}}{\left(
4\pi\right)  ^{N/2}\sqrt{\det C\left(  t,t_{0}\right)  }}2^{N}\sqrt{\det
C\left(  t,t_{0}\right)  }\int_{\mathbb{R}^{N}}e^{-\left\vert y\right\vert
^{2}}dy=e^{-\left(  t-t_{0}\right)  \operatorname*{Tr}B}.
\end{align*}
Next,%
\begin{align*}
&  \int_{\mathbb{R}^{N}}\Gamma\left(  x_{0},t;y,t_{0}\right)  dy\\
&  =\frac{e^{-\left(  t-t_{0}\right)  \operatorname*{Tr}B}}{\left(
4\pi\right)  ^{N/2}\sqrt{\det C\left(  t,t_{0}\right)  }}\int_{\mathbb{R}^{N}%
}e^{-\frac{1}{4}\left(  x_{0}-E\left(  t-t_{0}\right)  y\right)  ^{T}%
C^{-1}\left(  t,t_{0}\right)  \left(  x_{0}-E\left(  t-t_{0}\right)  y\right)
}dy\\
\text{letting }y  &  =E\left(  t_{0}-t\right)  \left(  x_{0}-2C\left(
t,t_{0}\right)  ^{1/2}z\right)  ;\\
dy  &  =2^{N}\det C\left(  t,t_{0}\right)  ^{1/2}\det E\left(  t_{0}-t\right)
dz=2^{N}\det C\left(  t,t_{0}\right)  ^{1/2}e^{\left(  t-t_{0}\right)
\operatorname{Tr}B}dz
\end{align*}%
\[
=\frac{e^{-\left(  t-t_{0}\right)  \operatorname*{Tr}B}}{\left(  4\pi\right)
^{N/2}\sqrt{\det C\left(  t,t_{0}\right)  }}2^{N}\det C\left(  t,t_{0}\right)
^{1/2}e^{\left(  t-t_{0}\right)  \operatorname{Tr}B}\int_{\mathbb{R}^{N}%
}e^{-\left\vert y\right\vert ^{2}}dy=1.
\]
Here in the change of variables we used the relation $\det\left(  \exp
B\right)  =e^{\operatorname{Tr}B}$, holding for every square matrix $B$.
\end{proof}

\bigskip

\begin{proof}
[Proof of Theorem \ref{Thm LG=0}]Keeping the notation of Proposition
\ref{Prop comput der}, and exploiting (\ref{der 1 x})-(\ref{der 2 x}) we have%
\begin{align}
&  \sum_{k,j=1}^{N}b_{jk}x_{k}\partial_{x_{j}}\Gamma\left(  x,t;x_{0}%
,t_{0}\right)  =\left(  \nabla_{x}\Gamma\left(  x,t;x_{0},t_{0}\right)
\right)  ^{T}Bx\nonumber\\
&  =-\frac{1}{2}\Gamma\left(  x,t;x_{0},t_{0}\right)  \left(  x-E\left(
t-t_{0}\right)  x_{0}\right)  ^{T}C\left(  t,t_{0}\right)  ^{-1}Bx.
\label{xBDG}%
\end{align}%
\begin{align}
&  \sum_{h,k=1}^{q}a_{hk}\left(  t\right)  \partial_{x_{h}x_{k}}^{2}%
\Gamma\left(  x,t;x_{0},t_{0}\right) \nonumber\\
&  =\Gamma\left\{  \frac{1}{4}\sum_{i,j=1}^{N}\left(  \sum_{h,k=1}^{q}%
a_{hk}\left(  t\right)  \gamma_{ik}\left(  t,t_{0}\right)  \gamma_{jh}\left(
t\right)  \right)  \cdot\right. \nonumber\\
&  \left.  \cdot\left(  x-E\left(  t-t_{0}\right)  x_{0}\right)  _{i}\left(
x-E\left(  t-t_{0}\right)  x_{0}\right)  _{j}-\frac{1}{2}\sum_{h,k=1}%
^{q}a_{hk}\left(  t\right)  \gamma_{hk}\left(  t,t_{0}\right)  \right\}
\nonumber\\
&  =\Gamma\left(  x,t;x_{0},t_{0}\right)  \cdot\left\{  \frac{1}{4}\left(
x-E\left(  t-t_{0}\right)  x_{0}\right)  ^{T}C\left(  t,t_{0}\right)
^{-1}A\left(  t\right)  C^{-1}\left(  x-E\left(  t-t_{0}\right)  x_{0}\right)
\right. \label{AGxx}\\
&  \left.  -\frac{1}{2}\operatorname{Tr}A\left(  t\right)  C\left(
t,t_{0}\right)  ^{-1}\right\}  .\nonumber
\end{align}%
\begin{align}
&  \partial_{t}\Gamma\left(  x,t;x_{0},t_{0}\right) \nonumber\\
&  =-\frac{\partial_{t}\left(  \det C\left(  t,t_{0}\right)  \right)
}{\left(  4\pi\right)  ^{N/2}2\det^{3/2}C\left(  t,t_{0}\right)  }e^{-\left(
\frac{1}{4}\left(  x-E\left(  t-t_{0}\right)  x_{0}\right)  ^{T}C\left(
t,t_{0}\right)  ^{-1}\left(  x-E\left(  t-t_{0}\right)  x_{0}\right)  +\left(
t-t_{0}\right)  \operatorname*{Tr}B\right)  }\nonumber\\
&  -\Gamma\left(  x,t;x_{0},t_{0}\right)  \cdot\nonumber\\
&  \cdot\partial_{t}\left(  \frac{1}{4}\left(  x-E\left(  t-t_{0}\right)
x_{0}\right)  ^{T}C\left(  t,t_{0}\right)  ^{-1}\left(  x-E\left(
t-t_{0}\right)  x_{0}\right)  +\left(  t-t_{0}\right)  \operatorname*{Tr}%
B\right) \nonumber\\
=  &  -\Gamma\left(  x,t;x_{0},t_{0}\right)  \left\{  \frac{\partial
_{t}\left(  \det C\left(  t,t_{0}\right)  \right)  }{2\det C\left(
t,t_{0}\right)  }\right. \label{D_tGamma}\\
&  +\left.  \frac{1}{4}\partial_{t}\left(  \left(  x-E\left(  t-t_{0}\right)
x_{0}\right)  ^{T}C\left(  t,t_{0}\right)  ^{-1}\left(  x-E\left(
t-t_{0}\right)  x_{0}\right)  \right)  +\operatorname*{Tr}B\right\}
.\nonumber
\end{align}
To shorten notation, from now on, throughout this proof, we will write%
\begin{align*}
&  C\text{ for }C\left(  t,t_{0}\right)  \text{, and}\\
&  E\text{ for }E\left(  t-t_{0}\right)  .
\end{align*}
To compute the $t$-derivative appearing in (\ref{D_tGamma}) we start writing%
\begin{align}
&  \partial_{t}\left(  \left(  x-Ex_{0}\right)  ^{T}C^{-1}\left(
x-Ex_{0}\right)  \right) \nonumber\\
&  =2\left(  -\partial_{t}Ex_{0}\right)  ^{T}C^{-1}\left(  x-Ex_{0}\right)
\nonumber\\
&  +\left(  x-Ex_{0}\right)  ^{T}\partial_{t}\left(  C^{-1}\right)  \left(
x-Ex_{0}\right)  . \label{D_t()}%
\end{align}
First, we note that%
\begin{equation}
\partial_{t}E=-B\exp\left(  -\left(  t-t_{0}\right)  B\right)  =-BE.
\label{D_tE}%
\end{equation}
Also, note that $B$ commutes with $E\left(  t\right)  $ and $B^{T}$ commutes
with $E\left(  t\right)  ^{T}$. Second, differentiating the identity
$C^{.-1}C=I$ we get%
\begin{equation}
\partial_{t}\left(  C^{-1}\right)  =-C^{-1}\partial_{t}\left(  C\right)
C^{-1}. \label{D_t(C^-1)}%
\end{equation}
In turn, at least for a.e. $t$, we have
\begin{align*}
\partial_{t}\left(  C\left(  t,t_{0}\right)  \right)   &  =E\left(  0\right)
A\left(  t\right)  E\left(  0\right)  ^{T}+\int_{t_{0}}^{t}\partial
_{t}E\left(  t-\sigma\right)  A\left(  \sigma\right)  E\left(  t-\sigma
\right)  ^{T}d\sigma\\
&  +\int_{t_{0}}^{t}E\left(  t-\sigma\right)  A\left(  \sigma\right)
\partial_{t}E\left(  t-\sigma\right)  ^{T}d\sigma\\
&  =A\left(  t\right)  -BC-CB^{T}.
\end{align*}
By (\ref{D_t(C^-1)}) this gives%
\begin{equation}
\partial_{t}\left(  C^{-1}\right)  =-C^{-1}A\left(  t\right)  C^{-1}%
+C^{-1}B+B^{T}C^{-1}. \label{D_t(C^-1)2}%
\end{equation}
Inserting (\ref{D_tE}) and (\ref{D_t(C^-1)2}) in (\ref{D_t()}) and then in
(\ref{D_tGamma}) we have%
\begin{align*}
&  \partial_{t}\left(  \left(  x-Ex_{0}\right)  ^{T}C^{-1}\left(
x-Ex_{0}\right)  \right) \\
&  =2\left(  BEx_{0}\right)  ^{T}C^{-1}\left(  x-Ex_{0}\right) \\
&  +\left(  x-Ex_{0}\right)  ^{T}\left[  -C^{-1}A\left(  t\right)
C^{-1}+2B^{T}C^{-1}\right]  \left(  x-Ex_{0}\right)  .
\end{align*}%
\begin{align}
\partial_{t}\Gamma &  =-\Gamma\left\{  \frac{\partial_{t}\left(  \det
C\right)  }{2\det C}+\operatorname*{Tr}B\right.  +\frac{1}{4}\left[  2\left(
BEx_{0}\right)  ^{T}C^{-1}\left(  x-Ex_{0}\right)  \right. \nonumber\\
&  \left.  \frac{{}}{{}}+\left.  \left(  x-Ex_{0}\right)  ^{T}\left[
-C^{-1}A\left(  t\right)  C^{-1}+2B^{T}C^{-1}\right]  \left(  x-Ex_{0}\right)
\right]  \right\} \nonumber
\end{align}%
\begin{align}
&  =-\Gamma\left\{  \frac{\partial_{t}\left(  \det C\right)  }{2\det
C}+\operatorname*{Tr}B\right.  -\frac{1}{4}\left(  x-Ex_{0}\right)  ^{T}%
C^{-1}A\left(  t\right)  C^{-1}\left(  x-Ex_{0}\right) \nonumber\\
&  +\left.  \frac{1}{2}x^{T}B^{T}C^{-1}\left(  x-Ex_{0}\right)  \right\}  .
\label{D_tGlast}%
\end{align}

Exploiting (\ref{AGxx}), (\ref{xBDG}) and (\ref{D_tGlast}) we can now compute
$\mathcal{L}\Gamma$:
\begin{align*}
&  \sum_{h,k=1}^{q}a_{hk}\left(  t\right)  \partial_{x_{h}x_{k}}^{2}%
\Gamma+\left(  \nabla\Gamma\right)  ^{T}Bx-\partial_{t}\Gamma\\
&  =\Gamma\left\{  \frac{1}{4}\left(  x-Ex_{0}\right)  ^{T}C^{-1}A\left(
t\right)  C^{-1}\left(  x-Ex_{0}\right)  -\frac{1}{2}\operatorname{Tr}A\left(
t\right)  C^{-1}\right. \\
&  -\frac{1}{2}\Gamma\left(  x-Ex_{0}\right)  ^{T}C^{-1}Bx+\frac{\partial
_{t}\left(  \det C\right)  }{2\det C}+\operatorname*{Tr}B\\
&  -\frac{1}{4}\left(  x-Ex_{0}\right)  ^{T}C^{-1}A\left(  t\right)
C^{-1}\left(  x-Ex_{0}\right)  +\left.  \frac{1}{2}x^{T}B^{T}C^{-1}\left(
x-Ex_{0}\right)  \right\} \\
&  =\Gamma\left\{  -\frac{1}{2}\operatorname{Tr}A\left(  t\right)
C^{-1}+\frac{\partial_{t}\left(  \det C\right)  }{2\det C}+\operatorname*{Tr}%
B\right\}  .
\end{align*}
To conclude our proof we are left to check that, in the last expression, the
quantity in braces identically vanishes for $t>t_{0}$. This, however, is not a
straightforward computation, since the term $\partial_{t}\left(  \det
C\right)  $ is not easily explicitly computed. Let us state this fact as a
separate ancillary result.
\end{proof}

\begin{proposition}
\label{Prop Ddet}For a.e. $t>t_{0}$ we have%
\[
\frac{\partial_{t}\left(  \det C\left(  t,t_{0}\right)  \right)  }{2\det
C\left(  t,t_{0}\right)  }=\frac{1}{2}\operatorname*{Tr}A\left(  t\right)
C\left(  t,t_{0}\right)  ^{-1}-\operatorname*{Tr}B.
\]

\end{proposition}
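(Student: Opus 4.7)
The plan is to invoke Jacobi's formula for the derivative of a determinant together with the expression for $\partial_t C(t,t_0)$ already derived in the proof of Theorem \ref{Thm LG=0}. Recall that in that proof it was shown that, for a.e.\ $t>t_0$,
\[
\partial_t C(t,t_0) = A(t) - B\,C(t,t_0) - C(t,t_0)\,B^T,
\]
where the identity holds almost everywhere because the integrand defining $C(t,t_0)$ is merely $L^\infty$ in the $\sigma$-variable, so by the Lebesgue differentiation theorem the Leibniz rule applies a.e. in $t$.

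Next, I would apply Jacobi's formula, namely
\[
\partial_t(\det C(t,t_0)) = \det C(t,t_0)\cdot \operatorname{Tr}\bigl(C(t,t_0)^{-1}\,\partial_t C(t,t_0)\bigr),
\]
valid whenever $C(t,t_0)$ is invertible (which holds for every $t>t_0$ by Proposition \ref{Prop fq C C0}) and differentiable in $t$ (which holds for a.e.\ $t>t_0$). Substituting the formula for $\partial_t C$ gives
\[
\frac{\partial_t(\det C(t,t_0))}{\det C(t,t_0)} = \operatorname{Tr}\bigl(C(t,t_0)^{-1}A(t)\bigr) - \operatorname{Tr}\bigl(C(t,t_0)^{-1} B\,C(t,t_0)\bigr) - \operatorname{Tr}\bigl(B^T\bigr).
\]
By the cyclic property of the trace, $\operatorname{Tr}(C^{-1} B\,C) = \operatorname{Tr}(B)$, and of course $\operatorname{Tr}(B^T)=\operatorname{Tr}(B)$. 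Dividing the resulting identity by $2$ yields exactly the claimed equality.

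There is no real obstacle in this argument; the only point requiring minimal care is the a.e. differentiability of $t\mapsto C(t,t_0)$ and of $t\mapsto C(t,t_0)^{-1}$, which however is already established in Section \ref{sec regularity} (Lipschitz continuity in $t$ on any region $H\le t_0+\delta \le t\le K$), so that Jacobi's formula is legitimately applicable a.e. in $t$.
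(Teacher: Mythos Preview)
Your proof is correct and considerably more direct than the paper's. The paper proceeds by differentiating the integral identity $\int_{\mathbb{R}^N}\Gamma(x,t;x_0,t_0)\,dx=e^{-(t-t_0)\operatorname{Tr}B}$ with respect to $t$, substituting the explicit expression for $\partial_t\Gamma$, performing a change of variables, and then invoking Lemma~\ref{Lemma Gauss xAx} on Gaussian integrals to identify the resulting terms as traces; only at the very end does the similarity-invariance of the trace appear. Your route via Jacobi's formula $\partial_t(\det C)=\det C\cdot\operatorname{Tr}(C^{-1}\partial_tC)$ bypasses all of this: once $\partial_tC=A(t)-BC-CB^T$ is in hand (and the paper has already computed it), the claim follows in two lines by cyclicity of the trace. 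The paper's approach has the minor advantage of being self-contained within the machinery already built (the integral of $\Gamma$ and the formula for $\partial_t\Gamma$), whereas yours imports Jacobi's formula---but that is a standard identity, and the almost-everywhere differentiability needed to apply it is, as you note, already secured by the Lipschitz regularity of $C(t,t_0)$ in $t$ established in \S\ref{sec regularity}. In short, your argument is the more economical one.
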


To prove this proposition we also need the following

\begin{lemma}
\label{Lemma Gauss xAx}For every $N\times N$ matrix $A$, and every $x_{0}%
\in\mathbb{R}^{N}$ we have:%
\begin{align}
\int_{\mathbb{R}^{N}}e^{-\left\vert x\right\vert ^{2}}\left(  x^{T}Ax\right)
dx  &  =\frac{\pi^{N/2}}{2}\operatorname*{Tr}A\nonumber\\
\int_{\mathbb{R}^{N}}e^{-\left\vert x\right\vert ^{2}}\left(  x_{0}%
^{T}Ax\right)  dx  &  =0. \label{null Gaussian}%
\end{align}

\end{lemma}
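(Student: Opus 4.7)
The plan is to reduce everything to one-dimensional Gaussian moments via Fubini and the quadratic/linear expansions. First, I would write
\[
x^{T}Ax=\sum_{i,j=1}^{N}A_{ij}x_{i}x_{j},\qquad x_{0}^{T}Ax=\sum_{i=1}^{N}\Bigl(\sum_{j=1}^{N}(x_{0})_{j}A_{ji}\Bigr)x_{i},
\]
so that the integrands become explicit linear combinations of monomials $x_{i}x_{j}$ or $x_{i}$ against the product Gaussian $e^{-|x|^{2}}=\prod_{k=1}^{N}e^{-x_{k}^{2}}$.

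Next, I would invoke the elementary one-dimensional identities
\[
\int_{\mathbb{R}}e^{-s^{2}}ds=\sqrt{\pi},\qquad\int_{\mathbb{R}}se^{-s^{2}}ds=0,\qquad\int_{\mathbb{R}}s^{2}e^{-s^{2}}ds=\tfrac{\sqrt{\pi}}{2},
\]
(the second by oddness, the third by integration by parts). Applying Fubini to the first claim, the mixed terms with $i\neq j$ contribute
\[
A_{ij}\Bigl(\int_{\mathbb{R}}x_{i}e^{-x_{i}^{2}}dx_{i}\Bigr)\Bigl(\int_{\mathbb{R}}x_{j}e^{-x_{j}^{2}}dx_{j}\Bigr)\prod_{k\neq i,j}\int_{\mathbb{R}}e^{-x_{k}^{2}}dx_{k}=0,
\]
while the diagonal terms contribute
\[
A_{ii}\cdot\tfrac{\sqrt{\pi}}{2}\cdot(\sqrt{\pi})^{N-1}=\tfrac{\pi^{N/2}}{2}A_{ii}.
\]
Summing over $i$ gives $\tfrac{\pi^{N/2}}{2}\operatorname{Tr}A$.

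For the second identity, each summand reduces by Fubini to
\[
\Bigl(\sum_{j}(x_{0})_{j}A_{ji}\Bigr)\Bigl(\int_{\mathbb{R}}x_{i}e^{-x_{i}^{2}}dx_{i}\Bigr)\prod_{k\neq i}\int_{\mathbb{R}}e^{-x_{k}^{2}}dx_{k}=0,
\]
so the whole sum vanishes. There is no real obstacle here: the only thing to watch is the bookkeeping that confirms $x_{0}$ factors out as a constant vector and the remaining integral is a first moment of the Gaussian, hence zero.
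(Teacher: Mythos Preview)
Your argument is correct and essentially identical to the paper's: expand the bilinear forms componentwise, factor the Gaussian via Fubini, kill the off-diagonal and linear terms by oddness, and evaluate the diagonal terms using $\int_{\mathbb{R}}s^{2}e^{-s^{2}}ds=\tfrac{\sqrt{\pi}}{2}$. The paper is slightly terser (it just says ``for symmetry reasons'' where you spell out the vanishing first moments), but there is no substantive difference.
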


\begin{proof}
[Proof of Lemma \ref{Lemma Gauss xAx}]The second identity is obvious for
symmetry reasons. As to the first one, letting $A=\left(  a_{ij}\right)
_{i,j=1}^{N},$%
\begin{align*}
&  \int_{\mathbb{R}^{N}}e^{-\left\vert x\right\vert ^{2}}\left(
x^{T}Ax\right)  dx\\
&  =\sum_{i=1}^{N}\left\{  \sum_{j=1,...,N,j\neq i}a_{ij}\int_{\mathbb{R}^{N}%
}e^{-\left\vert x\right\vert ^{2}}x_{i}x_{j}dx+a_{ii}\int_{\mathbb{R}^{N}%
}e^{-\left\vert x\right\vert ^{2}}x_{i}^{2}dx\right\} \\
&  =\sum_{i=1}^{N}\left\{  0+a_{ii}\left(  \int_{\mathbb{R}^{N-1}%
}e^{-\left\vert w\right\vert ^{2}}dw\right)  \left(  \int_{\mathbb{R}%
}e^{-x_{i}^{2}}x_{i}^{2}dx_{i}\right)  \right\} \\
&  =\sum_{i=1}^{N}a_{ii}\pi^{\frac{N-1}{2}}\left(  \int_{\mathbb{R}}e^{-t^{2}%
}t^{2}dt\right)  =\pi^{\frac{N-1}{2}}\cdot\frac{\sqrt{\pi}}{2}\sum_{i=1}%
^{N}a_{ii}=\frac{\pi^{N/2}}{2}\operatorname*{Tr}A
\end{align*}
where the integrals corresponding to the terms with $i\neq j$ vanish for
symmetry reasons.
\end{proof}

\bigskip

\begin{proof}
[Proof of Proposition \ref{Prop Ddet}]Taking $\frac{\partial}{\partial t}$ in
the identity (\ref{int Gamma tr}) we have, by (\ref{D_tGlast}), for almost
every $t>t_{0},$%
\begin{align*}
&  -e^{-\left(  t-t_{0}\right)  \operatorname*{Tr}B}\operatorname*{Tr}%
B=\int_{\mathbb{R}^{N}}\frac{\partial\Gamma}{\partial t}\left(  x,t;x_{0}%
,t_{0}\right)  dx\\
&  =-\int_{\mathbb{R}^{N}}\Gamma\left(  x,t;x_{0},t_{0}\right)  \left\{
\frac{\partial_{t}\left(  \det C\right)  }{2\det C}+\operatorname*{Tr}%
B+\frac{1}{2}x^{T}B^{T}C^{-1}\left(  x-Ex_{0}\right)  \right. \\
&  \left.  -\frac{1}{4}\left(  x-Ex_{0}\right)  ^{T}C^{-1}A\left(  t\right)
C^{-1}\left(  x-Ex_{0}\right)  \right\}  dx
\end{align*}%
\begin{align*}
&  =-\left\{  \frac{\partial_{t}\left(  \det C\right)  }{2\det C}%
+\operatorname*{Tr}B\right\}  e^{-\left(  t-t_{0}\right)  \operatorname*{Tr}%
B}-\int_{\mathbb{R}^{N}}\Gamma\left(  x,t;x_{0},t_{0}\right)  \left\{
\frac{1}{2}x^{T}B^{T}C^{-1}\left(  x-Ex_{0}\right)  \right. \\
&  \left.  -\frac{1}{4}\left(  x-Ex_{0}\right)  ^{T}C^{-1}A\left(  t\right)
C^{-1}\left(  x-Ex_{0}\right)  \right\}  dx
\end{align*}
hence%
\begin{align*}
&  \frac{\partial_{t}\left(  \det C\right)  }{2\det C}\cdot e^{-\left(
t-t_{0}\right)  \operatorname*{Tr}B}=-\frac{e^{-\left(  t-t_{0}\right)
\operatorname*{Tr}B}}{\left(  4\pi\right)  ^{N/2}\sqrt{\det C}}\cdot\\
&  \cdot\int_{\mathbb{R}^{N}}e^{-\frac{1}{4}\left(  x-Ex_{0}\right)
^{T}C^{-1}\left(  x-Ex_{0}\right)  }\left\{  \frac{1}{2}x^{T}B^{T}%
C^{-1}\left(  x-Ex_{0}\right)  \right. \\
&  \left.  -\frac{1}{4}\left(  x-Ex_{0}\right)  ^{T}C^{-1}A\left(  t\right)
C^{-1}\left(  x-Ex_{0}\right)  \right\}  dx
\end{align*}
and letting again $x=Ex_{0}+2C^{1/2}y$ inside the integral%
\begin{align*}
&  \frac{\partial_{t}\left(  \det C\right)  }{2\det C}=-\frac{1}{\pi^{N/2}%
}\int_{\mathbb{R}^{N}}e^{-\left\vert y\right\vert ^{2}}\cdot\\
&  \cdot\left\{  \left(  x_{0}^{T}E^{T}+2y^{T}C^{1/2}\right)  B^{T}%
C^{-1/2}y-y^{T}C^{-1/2}A\left(  t\right)  C^{-1/2}y\right\}  dy\\
&  =-\frac{1}{\pi^{N/2}}\frac{\pi^{N/2}}{2}\left(  0+2\operatorname{Tr}%
C^{1/2}B^{T}C^{-1/2}+\operatorname{Tr}C^{-1/2}A\left(  t\right)
C^{-1/2}\right) \\
&  =-\operatorname{Tr}C^{1/2}B^{T}C^{-1/2}+\frac{1}{2}\operatorname{Tr}%
C^{-1/2}A\left(  t\right)  C^{-1/2}.
\end{align*}
where we used Lemma \ref{Lemma Gauss xAx}. Finally, since similar matrices
have the same trace,
\begin{align*}
&  -\operatorname{Tr}C^{1/2}BC^{-1/2}+\frac{1}{2}\operatorname{Tr}%
C^{-1/2}A\left(  t\right)  C^{-1/2}\\
&  =-\operatorname{Tr}B+\frac{1}{2}\operatorname{Tr}A\left(  t\right)  C^{-1},
\end{align*}
so we are done.
\end{proof}

The proof of Proposition \ref{Prop Ddet} also completes the proof of Theorem
\ref{Thm LG=0}.

\begin{remark}
\label{Remark stima t fissato}Since, by Theorem \ref{Thm LG=0}, we can write
\[
\partial_{t}\Gamma\left(  x,t,x_{0},t_{0}\right)  =\sum_{i,j=1}^{q}%
a_{ij}\left(  t\right)  \partial_{x_{i}x_{j}}^{2}\Gamma\left(  x,t,x_{0}%
,t_{0}\right)  +\sum_{k,j=1}^{N}b_{jk}x_{k}\partial_{x_{j}}\Gamma\left(
x,t,x_{0},t_{0}\right)  ,
\]
the function $\partial_{t}\Gamma$ satisfies upper bounds analogous to those
proved in Theorem \ref{Thm bounds derivatives} for $\partial_{x_{i}x_{j}}%
^{2}\Gamma$.
\end{remark}

Let us now show that $\Gamma$ satisfies, with respect to the other variables,
the transposed equation, that is:

\begin{theorem}
\label{Thm L*Gamma=0}Letting%
\[
\mathcal{L}^{\ast}u=\sum_{i,j=1}^{q}a_{ij}\left(  s\right)  \partial
_{y_{i}y_{j}}^{2}u-\sum_{k,j=1}^{N}b_{jk}y_{k}\partial_{j_{j}}%
u-u\operatorname*{Tr}B+\partial_{s}u
\]
we have, for every fixed $\left(  x,t\right)  $%
\[
\mathcal{L}^{\ast}\left(  \Gamma\left(  x,t;\cdot,\cdot\right)  \right)
\left(  y,s\right)  =0
\]
for a.e. $s<t$ and every $y$.
\end{theorem}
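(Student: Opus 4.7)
The plan is to mimic the structure of the proof of Theorem \ref{Thm LG=0}, with the roles of $(x,t)$ and $(y,s)$ reversed. The starting point is the alternative representation
\[
\Gamma(x,t;y,s) = \frac{1}{(4\pi)^{N/2}\sqrt{\det C(t,s)}}\,\exp\Bigl(-\tfrac14 v^{T} C'(t,s)\,v - (t-s)\operatorname*{Tr}B\Bigr),
\]
where $v := y - E(s-t)x$ and $C'(t,s) := E(t-s)^{T} C(t,s)^{-1} E(t-s)$, as introduced in Proposition \ref{Prop comput der}. In this form, formulas (\ref{der 1 y})--(\ref{der 2 y}) give directly, using symmetry of $A(s)$ and $C'$,
\[
\sum_{h,k=1}^{q} a_{hk}(s)\,\partial^{2}_{y_{h}y_{k}}\Gamma = \Gamma\bigl\{\tfrac14 v^{T} C' A(s) C' v - \tfrac12 \operatorname*{Tr}(A(s)C')\bigr\},
\]
while $-(\nabla_{y}\Gamma)^{T} B y = \tfrac12 \Gamma\,v^{T} C' B y$.

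The main work is the explicit computation of $\partial_{s}\Gamma$. I would assemble it from the following elementary identities (valid for a.e.\ $s$): $\partial_{s} E(s-t) = -B E(s-t)$; $\partial_{s} C(t,s) = -E(t-s) A(s) E(t-s)^{T}$; hence $\partial_{s} C(t,s)^{-1} = C^{-1} E(t-s) A(s) E(t-s)^{T} C^{-1}$; and, differentiating $E(t-s)^{T} C(t,s)^{-1} E(t-s)$ factor-by-factor and using $E^{T} C^{-1} E = C'$ to simplify, $\partial_{s} C'(t,s) = B^{T} C' + C' B + C' A(s) C'$. For the prefactor I would appeal to the trace formula $\partial_{s}\ln\det C(t,s) = \operatorname*{Tr}(C^{-1}\partial_{s} C) = -\operatorname*{Tr}(A(s) C')$ by cyclicity of the trace; no integration-against-$\Gamma$ trick analogous to Proposition \ref{Prop Ddet} is required here. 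Combining these with $\partial_{s}[v^{T} C' v] = 2\,y^{T} B^{T} C' v + v^{T} C' A(s) C' v$, where the cross term $2(B E(s-t)x)^{T} C' v$ is reduced via $E(s-t)x = y - v$ so that the $v^{T} C' B v$ contributions cancel, one arrives at
\[
\partial_{s}\Gamma = \Gamma\,\Bigl\{\tfrac12 \operatorname*{Tr}(A(s) C') - \tfrac12 y^{T} B^{T} C' v - \tfrac14 v^{T} C' A(s) C' v + \operatorname*{Tr}B\Bigr\}.
\]

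Summing the four pieces of $\mathcal{L}^{\ast}\Gamma = \sum_{h,k=1}^{q} a_{hk}(s)\,\partial^{2}_{y_{h}y_{k}}\Gamma - (\nabla_{y}\Gamma)^{T} B y - \Gamma \operatorname*{Tr}B + \partial_{s}\Gamma$, the quadratic-in-$v$ pieces $\pm\tfrac14 v^{T} C' A(s) C' v$ cancel, the two trace terms $\mp\tfrac12 \operatorname*{Tr}(A(s)C')$ cancel, the two $\operatorname*{Tr}B$ contributions cancel, and what remains is $\tfrac12\Gamma\bigl(v^{T} C' B y - y^{T} B^{T} C' v\bigr)$, which vanishes since $C'$ is symmetric. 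The main obstacle is purely algebraic bookkeeping: correctly computing $\partial_{s} C'$, the cross term in $\partial_{s}[v^{T} C' v]$, and tracking the commutation $B E(s-t) = E(s-t) B$; once $\partial_{s} C'$ is in hand, all cancellations are automatic. The joint regularity that is required by the analogue of Definition \ref{Def solution} for $\mathcal{L}^{\ast}$ (smoothness in $y$, local Lipschitz continuity in $s$ for $s < t$) has already been established in \S\ref{sec regularity}, so no further regularity argument is needed.
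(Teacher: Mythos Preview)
Your proof is correct, and in fact cleaner than the paper's. The overall architecture matches: rewrite $\Gamma$ via $C'(t,s)=E(t-s)^{T}C(t,s)^{-1}E(t-s)$ and $v=y-E(s-t)x$, use (\ref{der 1 y})--(\ref{der 2 y}) for the spatial derivatives, compute $\partial_{s}\Gamma$, and verify the cancellations. The paper carries out the same tedious computation and arrives at
\[
\mathcal{L}^{\ast}\Gamma=\tfrac12\,\Gamma\Bigl\{-\operatorname{Tr}\bigl(A(s)C'(t,s)\bigr)-\frac{\partial_{s}\det C(t,s)}{\det C(t,s)}\Bigr\},
\]
so that the proof reduces to the identity $\partial_{s}\ln\det C(t,s)=-\operatorname{Tr}\bigl(A(s)C'(t,s)\bigr)$.

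The genuine difference is how this last identity is obtained. The paper proves it as a separate Proposition \ref{Prop Ddet2}, by differentiating the normalization $\int_{\mathbb{R}^{N}}\Gamma(x,t;x_{0},s)\,dx=e^{-(t-s)\operatorname{Tr}B}$ under the integral sign, changing variables, and invoking the Gaussian-moment Lemma \ref{Lemma Gauss xAx} --- exactly parallel to the argument for Proposition \ref{Prop Ddet}. You instead observe that $\partial_{s}C(t,s)=-E(t-s)A(s)E(t-s)^{T}$ (here the $s$-derivative is simpler than the $t$-derivative: only the lower limit of the defining integral moves) and apply Jacobi's formula $\partial_{s}\ln\det C=\operatorname{Tr}(C^{-1}\partial_{s}C)$ together with cyclicity of the trace. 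This is shorter and avoids the Gaussian-integration detour entirely; in fact the same device would also yield Proposition \ref{Prop Ddet} in one line from $\partial_{t}C=A(t)-BC-CB^{T}$. The paper's route is self-contained and parallels the forward case, but your direct use of Jacobi's formula is the more economical argument.
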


\begin{proof}
We keep the notation used in the proof of Proposition \ref{Prop comput der}:%
\begin{align*}
C^{\prime}\left(  t,s\right)   &  =E\left(  t-s\right)  ^{T}C\left(
t,s\right)  ^{-1}E\left(  t-s\right) \\
\Gamma\left(  x,t;y,s\right)   &  =\frac{1}{\left(  4\pi\right)  ^{N/2}%
\sqrt{\det C\left(  t,s\right)  }}e^{-\left(  \frac{1}{4}\left(  y-E\left(
s-t\right)  x\right)  ^{T}C^{\prime}\left(  t,s\right)  \left(  y-E\left(
s-t\right)  x\right)  +\left(  t-s\right)  \operatorname*{Tr}B\right)  }.
\end{align*}
Exploiting (\ref{der 1 y}) and (\ref{der 2 y}) we have, by a tedious
computation which is analogous to that in the proof of Theorem \ref{Thm LG=0}%
,
\begin{align*}
\mathcal{L}^{\ast}\Gamma\left(  x,t;y,s\right)   &  =\frac{1}{2}\Gamma\left(
x,t;y,s\right)  \left\{  -\operatorname{Tr}A\left(  s\right)  C^{\prime
}\left(  t,s\right)  -\frac{\partial_{s}\left(  \det C\left(  t,s\right)
\right)  }{\det C\left(  t,s\right)  }\right. \\
&  +y^{T}B^{T}C^{\prime}\left(  t,s\right)  y-y^{T}B^{T}E\left(  t-s\right)
^{T}C\left(  t,s\right)  ^{-1}x\\
&  \left.  +\left(  BE\left(  t-s\right)  y\right)  ^{T}C\left(  t,s\right)
^{-1}\left(  x-E\left(  t-s\right)  y\right)  \right\} \\
&  =\frac{1}{2}\Gamma\left(  x,t;y,s\right)  \left\{  -\operatorname{Tr}%
A\left(  s\right)  C^{\prime}\left(  t,s\right)  -\frac{\partial_{s}\left(
\det C\left(  t,s\right)  \right)  }{\det C\left(  t,s\right)  }\right\}  .
\end{align*}
So we are done provided that:
\end{proof}

\begin{proposition}
\label{Prop Ddet2}For a.e. $s<t$ we have%
\[
\frac{\partial_{s}\left(  \det C\left(  t,s\right)  \right)  }{2\det C\left(
t,s\right)  }=-\operatorname*{Tr}A\left(  s\right)  C^{\prime}\left(
t,s\right)  .
\]

\end{proposition}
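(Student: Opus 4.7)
The plan is to differentiate the integral definition (\ref{eq-EC}) of $C(t,s)$ with respect to its lower limit $s$, and then invoke Jacobi's formula for the derivative of a determinant. This route is considerably shorter than the one used for Proposition \ref{Prop Ddet}, since here the parameter $s$ enters $C(t,s)$ only through the endpoint of integration, so no integration by parts inside the integral is required.

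First I would observe that the integrand $\sigma \mapsto E(t-\sigma) A(\sigma) E(t-\sigma)^T$ is measurable and essentially bounded on every compact subinterval of $(-\infty, t)$. Hence by the Lebesgue differentiation theorem,
\[
\partial_s C(t,s) = -E(t-s)\, A(s)\, E(t-s)^T \qquad\text{for a.e.\ } s<t.
\]
Since, by Proposition \ref{Prop fq C C0}, the matrix $C(t,s)$ is positive definite for every $s<t$, and is Lipschitz continuous in $s$ by the remarks at the start of Section \ref{sec regularity}, Jacobi's formula $\partial_s \log\det C(t,s) = \operatorname{Tr}\bigl(C(t,s)^{-1} \partial_s C(t,s)\bigr)$ applies at almost every $s<t$. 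Substituting the expression for $\partial_s C(t,s)$ and using the cyclic property of the trace to move $E(t-s)^T$ to the left of $C(t,s)^{-1}$ gives
\[
\frac{\partial_s \det C(t,s)}{\det C(t,s)} = -\operatorname{Tr}\bigl(C(t,s)^{-1} E(t-s) A(s) E(t-s)^T\bigr) = -\operatorname{Tr}\bigl(A(s)\, C'(t,s)\bigr),
\]
by definition of $C'(t,s) = E(t-s)^T C(t,s)^{-1} E(t-s)$. This is exactly the identity required to cancel the remaining terms in the expression for $\mathcal{L}^{\ast}\Gamma$ at the end of the proof of Theorem \ref{Thm L*Gamma=0}.

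There is no genuine obstacle here: the a.e.\ differentiability of $s \mapsto C(t,s)$ and the pointwise form of its derivative follow immediately from its Lipschitz regularity together with the essential boundedness of the integrand, while Jacobi's formula is standard once invertibility of $C(t,s)$ is ensured. An alternative strategy, parallel to the proof of Proposition \ref{Prop Ddet}, would be to differentiate the identity $\int_{\mathbb{R}^N} \Gamma(x_0,t;y,s)\,dy = 1$ in $s$, interchange derivative and integral (justified by the Gaussian bounds of Theorem \ref{Thm bounds derivatives} and Remark \ref{Remark stima t fissato}), reduce the resulting integrals to Gaussian form via the change of variable $y = E(s-t)\bigl(x_0 - 2\, C'(t,s)^{-1/2} z\bigr)$, and apply Lemma \ref{Lemma Gauss xAx} to kill the linear terms and isolate the trace. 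This works but is distinctly longer than the direct Jacobi argument above.
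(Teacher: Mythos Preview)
Your proof is correct and considerably shorter than the paper's. The paper follows exactly the ``alternative strategy'' you sketch at the end: it differentiates the identity $\int_{\mathbb{R}^N}\Gamma(x,t;x_0,s)\,dx = e^{-(t-s)\operatorname{Tr}B}$ with respect to $s$, expands $\partial_s\Gamma$, performs the Gaussian substitution $x = E(t-s)x_0 + 2C(t,s)^{1/2}y$, and applies Lemma \ref{Lemma Gauss xAx} to extract the trace. That indirect route was forced in Proposition \ref{Prop Ddet} because there $t$ enters $C(t,t_0)$ both as the upper endpoint and inside the integrand $E(t-\sigma)A(\sigma)E(t-\sigma)^T$, so $\partial_t C$ has three terms and the $\operatorname{Tr}B$ contribution must be teased out. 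Here $s$ appears only as the lower limit of integration, so $\partial_s C(t,s) = -E(t-s)A(s)E(t-s)^T$ is immediate and Jacobi's formula closes the argument in one line. Your approach buys brevity and transparency; the paper's approach has the virtue of parallelism with Proposition \ref{Prop Ddet} but is otherwise unnecessarily heavy.

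One remark: the identity you derive is $\dfrac{\partial_s\det C(t,s)}{\det C(t,s)} = -\operatorname{Tr}\bigl(A(s)\,C'(t,s)\bigr)$, which differs by a factor of $2$ from the proposition as literally stated. This is a typo in the paper's statement: the last displayed line of the paper's own proof also has no $2$ in the denominator, and it is your version (without the $2$) that is exactly what is needed to make the bracketed expression $-\operatorname{Tr}A(s)C'(t,s) - \dfrac{\partial_s\det C}{\det C}$ vanish at the end of Theorem \ref{Thm L*Gamma=0}.
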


\begin{proof}
Taking $\frac{\partial}{\partial s}$ in the identity (\ref{int Gamma tr}) we
have, by (\ref{D_tGlast}), for almost every $s<t,$
\begin{align*}
&  e^{-\left(  t-s\right)  \operatorname*{Tr}B}\operatorname*{Tr}%
B=\int_{\mathbb{R}^{N}}\frac{\partial\Gamma}{\partial s}\left(  x,t;x_{0}%
,s\right)  dx\\
&  =-\int_{\mathbb{R}^{N}}\Gamma\left(  x,t;x_{0},s\right)  \cdot\\
&  \cdot\left\{  \frac{\partial_{s}\left(  \det C\right)  }{2\det
C}-\operatorname*{Tr}B-\frac{1}{2}\left(  BE\left(  t-s\right)  x_{0}\right)
^{T}C\left(  t,s\right)  ^{-1}\left(  x-E\left(  t-s\right)  x_{0}\right)
\right. \\
&  \left.  +\frac{1}{4}\left(  E\left(  s-t\right)  x-x_{0}\right)
^{T}C^{\prime}\left(  t,s\right)  A\left(  s\right)  C^{\prime}\left(
t,s\right)  \left(  E\left(  s-t\right)  x-x_{0}\right)  \right\}  dx
\end{align*}%
\begin{align*}
&  =-\left\{  \frac{\partial_{s}\left(  \det C\right)  }{2\det C}%
-\operatorname*{Tr}B\right\}  e^{-\left(  t-s\right)  \operatorname*{Tr}B}\\
&  -\int_{\mathbb{R}^{N}}\Gamma\left(  x,t;x_{0},s\right)  \left\{  -\frac
{1}{2}\left(  BE\left(  t-s\right)  x_{0}\right)  ^{T}C\left(  t,s\right)
^{-1}\left(  x-E\left(  t-s\right)  x_{0}\right)  \right. \\
&  \left.  +\frac{1}{4}\left(  E\left(  s-t\right)  x-x_{0}\right)
^{T}C^{\prime}\left(  t,s\right)  A\left(  s\right)  C^{\prime}\left(
t,s\right)  \left(  E\left(  s-t\right)  x-x_{0}\right)  \right\}  dx
\end{align*}
hence%
\begin{align*}
&  \frac{\partial_{s}\left(  \det C\right)  }{2\det C}=-\frac{1}{\left(
4\pi\right)  ^{N/2}\sqrt{\det C}}\int_{\mathbb{R}^{N}}e^{-\frac{1}{4}\left(
x-E\left(  t-s\right)  x_{0}\right)  ^{T}C\left(  t,s\right)  ^{-1}\left(
x-E\left(  t-s\right)  x_{0}\right)  }\cdot\\
&  \cdot\left\{  -\frac{1}{2}\left(  BE\left(  t-s\right)  x_{0}\right)
^{T}C\left(  t,s\right)  ^{-1}\left(  x-E\left(  t-s\right)  x_{0}\right)
\right. \\
&  \left.  +\frac{1}{4}\left(  E\left(  s-t\right)  x-x_{0}\right)
^{T}C^{\prime}\left(  t,s\right)  A\left(  s\right)  C^{\prime}\left(
t,s\right)  \left(  E\left(  s-t\right)  x-x_{0}\right)  \right\}  dx
\end{align*}
and letting again $x=E\left(  t-s\right)  x_{0}+2C^{1/2}\left(  t,s\right)  y$
inside the integral, applying Lemma \ref{Lemma Gauss xAx} and
(\ref{null Gaussian}), with some computation we get%
\[
\frac{\partial_{s}\left(  \det C\right)  }{\det C}=-\operatorname{Tr}%
C^{-1/2}\left(  t,s\right)  E\left(  t-s\right)  A\left(  s\right)  E\left(
t-s\right)  ^{T}C\left(  t,s\right)  ^{-1/2}.
\]
Since $C^{-1/2}\left(  t,s\right)  E\left(  t-s\right)  A\left(  s\right)
E\left(  t-s\right)  ^{T}C\left(  t,s\right)  ^{-1/2}$ and $A\left(  s\right)
C^{\prime}\left(  t,s\right)  $ are similar, they have the same trace, so the
proof is concluded.
\end{proof}

\subsection{The Cauchy problem\label{sec Cauchy}}

In this section we will prove points (iv), (v), (vii) of Theorem
\ref{Thm main}.

We are going to show that the Cauchy problem can be solved, by means of our
fundamental solution $\Gamma$. Just to simplify notation, let us now take
$t_{0}=0$ and let $C\left(  t\right)  =C\left(  t,0\right)  $. We have the following:

\begin{theorem}
\label{Thm Cauchy}Let
\begin{align}
u\left(  x,t\right)   &  =\int_{\mathbb{R}^{N}}\Gamma\left(  x,t;y,0\right)
f\left(  y\right)  dy\nonumber\\
&  =\frac{e^{-t\operatorname*{Tr}B}}{\left(  4\pi\right)  ^{N/2}\sqrt{\det
C\left(  t\right)  }}\int_{\mathbb{R}^{N}}e^{-\frac{1}{4}\left(  x-E\left(
t\right)  y\right)  ^{T}C\left(  t\right)  ^{-1}\left(  x-E\left(  t\right)
y\right)  }f\left(  y\right)  dy. \label{repr formula Cauchy1}%
\end{align}
Then:

(a) if $f\in L^{p}\left(  \mathbb{R}^{N}\right)  $ for some $p\in\left[
1,\infty\right]  $ or $f\in C_{b}^{0}\left(  \mathbb{R}^{N}\right)  $ (bounded
continuous) then $u$ solves the equation $\mathcal{L}u=0$ in $\mathbb{R}%
^{N}\times\left(  0,\infty\right)  $ and $u\left(  \cdot,t\right)  \in
C^{\infty}\left(  \mathbb{R}^{N}\right)  $ for every fixed $t>0$.

(b) if $f\in C^{0}\left(  \mathbb{R}^{N}\right)  $ and there exists $C>0$ such
that (\ref{exp bound}) holds, then there exists $T>0$ such that $u$ solves the
equation $\mathcal{L}u=0$ in $\mathbb{R}^{N}\times\left(  0,T\right)  $ and
$u\left(  \cdot,t\right)  \in C^{\infty}\left(  \mathbb{R}^{N}\right)  $ for
every fixed $t\in\left(  0,T\right)  $.

The initial condition $f$ is attained in the following senses:

(i) For every $p\in\lbrack1,+\infty),$ if $f\in L^{p}\left(  \mathbb{R}%
^{N}\right)  $ we have $u\left(  \cdot,t\right)  \in L^{p}\left(
\mathbb{R}^{N}\right)  $ for every $t>0$, and%
\[
\left\Vert u\left(  \cdot,t\right)  -f\right\Vert _{L^{p}\left(
\mathbb{R}^{N}\right)  }\rightarrow0\text{ as }t\rightarrow0^{+}.
\]

(ii) If $f\in L^{\infty}\left(  \mathbb{R}^{N}\right)  $ and $f$ is continuous
at some point $x_{0}\in\mathbb{R}^{N}$ then%
\[
u\left(  x,t\right)  \rightarrow f\left(  x_{0}\right)  \text{ as }\left(
x,t\right)  \rightarrow\left(  x_{0},0\right)  .
\]

(iii) If $f\in C_{\ast}^{0}\left(  \mathbb{R}^{N}\right)  $ (i.e., vanishing
at infinity) then%
\[
\sup_{x\in\mathbb{R}^{N}}\left\vert u\left(  x,t\right)  -f\left(  x\right)
\right\vert \rightarrow0\text{ as }t\rightarrow0^{+}.
\]

(iv) If $f\in C^{0}\left(  \mathbb{R}^{N}\right)  $ and satisfies
(\ref{exp bound}), then%
\[
u\left(  x,t\right)  \rightarrow f\left(  x_{0}\right)  \text{ as }\left(
x,t\right)  \rightarrow\left(  x_{0},0\right)  .
\]

\end{theorem}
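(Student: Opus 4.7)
The plan splits into three pieces: showing that $u$ inherits the equation $\mathcal{L}u=0$ and $x$-smoothness from $\Gamma$; showing $L^p$ attainment of the initial datum; and showing the remaining pointwise-type attainments. Everything will rest on two tools already in hand: the fact $\mathcal{L}\Gamma=0$ from Theorem \ref{Thm LG=0}, and the precise upper bounds on $\Gamma$ and its derivatives from Theorem \ref{Thm bounds derivatives} and Proposition \ref{Prop lim t0}.

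For parts (a) and (b), my strategy is differentiation under the integral sign. On any compact set of the form $K_1\times[\varepsilon,K_2]$ in $(x,t)$, the bound (\ref{global bounds der}) majorizes $|\partial_x^\alpha \Gamma(x,t;y,0)|$ by $C\,e^{-C'|x-E(t)y|^2}P(|x-E(t)y|)$, whose integral in $y$ is uniformly bounded. This majorant controls the integrand for $f\in L^p$ (via H\"older's inequality if $p>1$) and for $f\in C_b^0$, so every $x$-derivative passes under the integral and $u(\cdot,t)\in C^\infty(\mathbb{R}^N)$. Writing $\mathcal{L}u$ and moving $\mathcal{L}$ inside the integral then yields $\mathcal{L}u=\int \mathcal{L}_{(x,t)}\Gamma\cdot f\,dy=0$ thanks to Theorem \ref{Thm LG=0}; the $t$-regularity demanded by Definition \ref{Def solution} (absolute continuity in $t$, essential boundedness of $\partial_t u$) follows from the regularity of $\Gamma$ established in \S\ref{sec regularity} together with Remark \ref{Remark stima t fissato}. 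Case (b) is handled the same way, but the short-time bound of Theorem \ref{Thm bounds derivatives}(ii), combined with the growth hypothesis (\ref{exp bound}) on $f$, requires $c/t>\alpha$ for the Gaussian decay of $\Gamma$ in $y$ to beat $e^{\alpha|y|^2}$; this is precisely the origin of the threshold $T$.

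For the $L^p$ attainment (i), my key observation is that the change of variable $w=x-E(t)y$ (so that $dy=e^{t\operatorname{Tr}B}dw$, since $\det E(-t)=e^{t\operatorname{Tr}B}$) turns $u$ into a genuine convolution $u(x,t)=(K_t*F_t)(x)$, where
\[
K_t(w)=\frac{1}{(4\pi)^{N/2}\sqrt{\det C(t)}}\,e^{-\frac{1}{4}w^T C(t)^{-1}w},\qquad F_t(x)=f(E(-t)x),
\]
and $K_t$ is a probability density (a Gaussian integral, consistent with Proposition \ref{Prop integral Gamma dx}). Young's inequality gives $\|u(\cdot,t)\|_p\le\|F_t\|_p$ (hence $\|u(\cdot,t)\|_p \le e^{-t\operatorname{Tr}B/p}\|f\|_p$, after a further change of variable), so $u(\cdot,t)\in L^p$. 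Then
\[
\|u(\cdot,t)-f\|_p \le \|K_t*(F_t-f)\|_p+\|K_t*f-f\|_p \le \|F_t-f\|_p+\|K_t*f-f\|_p.
\]
The first summand tends to $0$ because $E(-t)\to I$ and composition with a smoothly varying linear map is $L^p$-continuous (density of $C_c$ plus dominated convergence). The second tends to $0$ by the usual approximate identity argument, since Proposition \ref{Prop lim t0} gives $K_t(w)\le Ct^{-Q/2}e^{-c|w|^2/t}$, which forces $\int_{|w|>\delta}K_t(w)\,dw\to 0$ for each $\delta>0$.

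For the pointwise statements (ii)--(iv) I would again use the convolution form and split $\int = \int_{|w|<\delta}+\int_{|w|\ge\delta}$. On the inner region, continuity of $f$ at $x_0$ (or uniform continuity for (iii)) combined with $E(-t)(x-w)\to x_0$ makes $F_t(x-w)\to f(x_0)$, so this portion contributes $f(x_0)\int_{|w|<\delta}K_t\,dw\to f(x_0)$ as $(x,t)\to(x_0,0^+)$. On the outer region $K_t$ concentrates away from $0$ and the tail integral vanishes using a crude bound on $f$: $L^\infty$ control in (ii)--(iii), and (\ref{exp bound}) in (iv). The hardest case, and the main obstacle throughout, is (iv): here one must keep $\int e^{-c|w|^2/t}|f(E(-t)(x-w))|\,dw$ finite despite the exponential growth of $|f|$, which is precisely what the choice $T<c/\alpha$ secures by making the Gaussian decay of $K_t$ strictly beat the growth permitted on $f$. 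At the structural level the whole argument rests on the fact that the degenerate drift $Bx$ twists the center of $\Gamma$ from $y$ to $E(t)y$, and the substitution $w=x-E(t)y$ undoes this twist, reducing everything to a classical Gaussian approximate identity.
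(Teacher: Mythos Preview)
Your proposal is correct and follows the same overall architecture as the paper (differentiate under the integral via Theorem~\ref{Thm bounds derivatives}, then handle the initial datum by a change of variable), but the change of variable you choose is genuinely different. You set $w=x-E(t)y$, which recasts $u(\cdot,t)$ as a true convolution $K_t\ast F_t$ with a $t$-dependent Gaussian $K_t$ and a linearly distorted datum $F_t=f\circ E(-t)$; the paper instead sets $z=\tfrac12 C(t)^{-1/2}(x-E(t)y)$, which produces the \emph{fixed} weight $\pi^{-N/2}e^{-|z|^2}$ and pushes all the $t$-dependence into the argument of $f$. Your route is the classical ``approximate identity + continuity of linear reparametrization on $L^p$'' argument, and it buys a clean Young-inequality bound $\|u(\cdot,t)\|_p\le e^{-t\operatorname{Tr}B/p}\|f\|_p$; the paper's route buys immediate dominated convergence (the majorant $e^{-|z|^2}$ never changes), which is why the paper can treat (i)--(iii) uniformly via Minkowski/Lebesgue without ever invoking an approximate-identity lemma. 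One point where the paper's choice is a bit safer is (iii): in your inner region $|w|<\delta$ you need $f(E(-t)(x-w))-f(x)$ small \emph{uniformly in $x$}, but the drift $(E(-t)-I)x$ is not uniformly small, so uniform continuity of $f$ alone is not enough; you will need, as the paper does, to approximate $f\in C^0_\ast$ by $C_c^0$ functions to close this step. For (b) and (iv) the paper splits in the original variable $y$ (near/far from $x_0$), which lets it reuse (ii) on the bounded piece verbatim; your splitting in $w$ works too but requires translating the region $|w|\ge\delta$ back through $E(t)$ before invoking (\ref{exp bound}).
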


\begin{proof}
From Theorem \ref{Thm bounds derivatives}, (i), we read that for $\left(
x,t\right)  $ ranging in a compact subset of $\mathbb{R}^{N}\times\left(
0,+\infty\right)  $, and every $y\in\mathbb{R}^{N}$,%
\[
\sum_{\left\vert \alpha\right\vert \leq n}\left\vert \partial_{x}^{\alpha
}\Gamma\left(  x,t;y,0\right)  \right\vert \leq ce^{-c_{1}\left\vert
y\right\vert ^{2}}\cdot\left\{  1+\left\vert y\right\vert ^{n}\right\}
\]
for suitable constants $c,c_{1}>0$. Moreover, by Remark
\ref{Remark stima t fissato}, $\left\vert \partial_{t}\Gamma\right\vert $ also
satisfies this bound (with $n=2$). This implies that for every $f\in
L^{p}\left(  \mathbb{R}^{N}\right)  $ for some $p\in\left[  1,\infty\right]
$, (in particular for $f\in C_{b}^{0}\left(  \mathbb{R}^{N}\right)  $) the
integral defining $u$ converges and $\mathcal{L}u$ can be computed taking the
derivatives inside the integral. Moreover, all the derivatives $u_{x_{i}%
},u_{x_{i}x_{j}}$ are continuous, while $u_{t}$ is defined only almost
everywhere, and locally essentially bounded. Then by Theorem \ref{Thm LG=0} we
have $\mathcal{L}u\left(  x,t\right)  =0$ for a.e. $t>0$ and every
$x\in\mathbb{R}^{N}$. Also, the $x$-derivatives of every order can be actually
taken under the integral sign, so that $u\left(  \cdot,t\right)  \in
C^{\infty}\left(  \mathbb{R}^{N}\right)  $. This proves (a). Postponing for a
moment the proof of (b), to show that $u$ attains the initial condition
(points (i)-(iii)) let us perform, inside the integral in
(\ref{repr formula Cauchy1}), the change of variables
\begin{align*}
C\left(  t\right)  ^{-1/2}\left(  x-E\left(  t\right)  y\right)   &  =2z\\
y  &  =E\left(  -t\right)  \left(  x-2C\left(  t\right)  ^{1/2}z\right) \\
dy  &  =2^{N}e^{t\operatorname*{Tr}B}\det C\left(  t\right)  ^{1/2}dz
\end{align*}
so that%
\[
u\left(  x,t\right)  =\frac{1}{\pi^{N/2}}\int_{\mathbb{R}^{N}}e^{-\left\vert
z\right\vert ^{2}}f\left(  E\left(  -t\right)  \left(  x-2C\left(  t\right)
^{1/2}z\right)  \right)  dz
\]
and, since $\int_{\mathbb{R}^{N}}\frac{e^{-\left\vert z\right\vert ^{2}}}%
{\pi^{N/2}}dz=1,$%
\[
\left\vert u\left(  x,t\right)  -f\left(  x\right)  \right\vert \leq
\int_{\mathbb{R}^{N}}\frac{e^{-\left\vert z\right\vert ^{2}}}{\pi^{N/2}%
}\left\vert f\left(  E\left(  -t\right)  \left(  x-2C\left(  t\right)
^{1/2}z\right)  \right)  -f\left(  x\right)  \right\vert dz.
\]
Let us now proceed separately in the three cases.

(i) By Minkowsky's inequality for integrals we have%
\[
\left\Vert u\left(  \cdot,t\right)  -f\right\Vert _{L^{p}\left(
\mathbb{R}^{N}\right)  }\leq\int_{\mathbb{R}^{N}}\frac{e^{-\left\vert
z\right\vert ^{2}}}{\pi^{N/2}}\left\Vert f\left(  E\left(  -t\right)  \left(
\cdot-2C\left(  t\right)  ^{1/2}z\right)  \right)  -f\left(  \cdot\right)
\right\Vert _{L^{p}\left(  \mathbb{R}^{N}\right)  }dz.
\]
Next,%
\begin{align*}
&  \left\Vert f\left(  E\left(  -t\right)  \left(  \cdot-2C\left(  t\right)
^{1/2}z\right)  \right)  -f\left(  \cdot\right)  \right\Vert _{L^{p}\left(
\mathbb{R}^{N}\right)  }\\
&  \leq\left\Vert f\left(  E\left(  -t\right)  \left(  \cdot-2C\left(
t\right)  ^{1/2}z\right)  \right)  \right\Vert _{L^{p}\left(  \mathbb{R}%
^{N}\right)  }+\left\Vert f\right\Vert _{L^{p}\left(  \mathbb{R}^{N}\right)
}\\
&  =\left\Vert f\left(  E\left(  -t\right)  \left(  \cdot\right)  \right)
\right\Vert _{L^{p}\left(  \mathbb{R}^{N}\right)  }+\left\Vert f\right\Vert
_{L^{p}\left(  \mathbb{R}^{N}\right)  }\leq c\left\Vert f\right\Vert
_{L^{p}\left(  \mathbb{R}^{N}\right)  }%
\end{align*}
for $0<t<1$, since%
\[
\left\Vert f\left(  E\left(  -t\right)  \left(  \cdot\right)  \right)
\right\Vert _{L^{p}\left(  \mathbb{R}^{N}\right)  }^{p}=\int_{\mathbb{R}^{N}%
}\left\vert f\left(  E\left(  -t\right)  \left(  x\right)  \right)
\right\vert ^{p}dx
\]
letting $E\left(  -t\right)  x=y;x=E\left(  t\right)
y;dx=e^{-t\operatorname*{Tr}B}dy,$%
\[
=e^{-t\operatorname*{Tr}B}\left\Vert f\right\Vert _{L^{p}\left(
\mathbb{R}^{N}\right)  }\leq e^{\left\vert \operatorname*{Tr}B\right\vert
}\left\Vert f\right\Vert _{L^{p}\left(  \mathbb{R}^{N}\right)  }\text{ for
}0<t<1.
\]
This means that for every $t\in\left(  0,1\right)  $ we have%
\[
\frac{e^{-\left\vert z\right\vert ^{2}}}{\pi^{N/2}}\left\Vert f\left(
E\left(  -t\right)  \left(  \cdot-2C\left(  t\right)  ^{1/2}z\right)  \right)
-f\left(  \cdot\right)  \right\Vert _{L^{p}\left(  \mathbb{R}^{N}\right)
}\leq c\left\Vert f\right\Vert _{L^{p}\left(  \mathbb{R}^{N}\right)  }%
\frac{e^{-\left\vert z\right\vert ^{2}}}{\pi^{N/2}}\in L^{1}\left(
\mathbb{R}^{N}\right)  .
\]
Let us show that for a.e. fixed $z\in\mathbb{R}^{N}$ we also have
\[
\frac{e^{-\left\vert z\right\vert ^{2}}}{\pi^{N/2}}\left\Vert f\left(
E\left(  -t\right)  \left(  \cdot-2C\left(  t\right)  ^{1/2}z\right)  \right)
-f\left(  \cdot\right)  \right\Vert _{L^{p}\left(  \mathbb{R}^{N}\right)
}\rightarrow0\text{ as }t\rightarrow0^{+},
\]
this will imply the desired result by Lebesgue's theorem.%
\begin{align*}
&  \left\Vert f\left(  E\left(  -t\right)  \left(  \cdot-2C\left(  t\right)
^{1/2}z\right)  \right)  -f\left(  \cdot\right)  \right\Vert _{L^{p}\left(
\mathbb{R}^{N}\right)  }\\
&  \leq\left\Vert f\left(  E\left(  -t\right)  \left(  \cdot-2C\left(
t\right)  ^{1/2}z\right)  \right)  -f\left(  E\left(  -t\right)  \cdot\right)
\right\Vert _{L^{p}\left(  \mathbb{R}^{N}\right)  }+\left\Vert f\left(
E\left(  -t\right)  \left(  \cdot\right)  \right)  -f\right\Vert
_{L^{p}\left(  \mathbb{R}^{N}\right)  }.
\end{align*}
Now:%
\begin{align*}
&  \left\Vert f\left(  E\left(  -t\right)  \left(  \cdot-2C\left(  t\right)
^{1/2}z\right)  \right)  -f\left(  E\left(  -t\right)  \cdot\right)
\right\Vert _{L^{p}\left(  \mathbb{R}^{N}\right)  }^{p}\\
&  =\int_{\mathbb{R}^{N}}\left\vert f\left(  E\left(  -t\right)  \left(
x-2C\left(  t\right)  ^{1/2}z\right)  \right)  -f\left(  E\left(  -t\right)
x\right)  \right\vert ^{p}dx\\
&  =e^{t\operatorname*{Tr}B}\int_{\mathbb{R}^{N}}\left\vert f\left(
y-2E\left(  -t\right)  C\left(  t\right)  ^{1/2}z\right)  -f\left(  y\right)
\right\vert ^{p}dy\rightarrow0
\end{align*}
for $z$ fixed and $t\rightarrow0^{+}$, because $2E\left(  -t\right)  C\left(
t\right)  ^{1/2}z\rightarrow0$ and the translation operator is continuous on
$L^{p}\left(  \mathbb{R}^{N}\right)  .$

It remains to show that
\[
\left\Vert f\left(  E\left(  -t\right)  \left(  \cdot\right)  \right)
-f\right\Vert _{L^{p}\left(  \mathbb{R}^{N}\right)  }\rightarrow0\text{ as
}t\rightarrow0^{+}\text{,}%
\]
which is not straightforward. For every fixed $\varepsilon>0$, let $\phi$ be a
compactly supported continous function such that $\left\Vert f-\phi\right\Vert
_{p}<\varepsilon$, then%
\begin{align*}
\left\Vert f\left(  E\left(  -t\right)  \left(  \cdot\right)  \right)
-f\right\Vert _{p}  &  \leq\left\Vert f\left(  E\left(  -t\right)  \left(
\cdot\right)  \right)  -\phi\left(  E\left(  -t\right)  \left(  \cdot\right)
\right)  \right\Vert _{p}\\
&  +\left\Vert \phi\left(  E\left(  -t\right)  \left(  \cdot\right)  \right)
-\phi\right\Vert _{p}+\left\Vert f-\phi\right\Vert _{p}%
\end{align*}
and%
\[
\left\Vert f\left(  E\left(  -t\right)  \left(  \cdot\right)  \right)
-\phi\left(  E\left(  -t\right)  \left(  \cdot\right)  \right)  \right\Vert
_{p}=\left(  e^{t\operatorname*{Tr}B}\right)  ^{1/p}\left\Vert f-\phi
\right\Vert _{p}\leq\left(  e^{\left\vert \operatorname*{Tr}B\right\vert
}\right)  ^{1/p}\varepsilon
\]
for $t\in\left(  0,1\right)  $. Let $\operatorname*{sprt}\phi\subset
B_{R}\left(  0\right)  $, then for every $t\in\left(  0,1\right)  $ we have
$\left\vert E\left(  -t\right)  \left(  x\right)  \right\vert \leq c\left\vert
x\right\vert $ so that
\[
\left\Vert \phi\left(  E\left(  -t\right)  \left(  \cdot\right)  \right)
-\phi\right\Vert _{L^{p}\left(  \mathbb{R}^{N}\right)  }^{p}=\int_{\left\vert
x\right\vert <CR}\left\vert \phi\left(  E\left(  -t\right)  \left(  x\right)
\right)  -\phi\left(  x\right)  \right\vert ^{p}dx.
\]
Since for every $x\in\mathbb{R}^{N}$, $\phi\left(  E\left(  -t\right)  \left(
x\right)  \right)  \rightarrow\phi\left(  x\right)  $ as $t\rightarrow0^{+}$
and
\[
\left\vert \phi\left(  E\left(  -t\right)  \left(  x\right)  \right)
-\phi\left(  x\right)  \right\vert ^{p}\leq2\max\left\vert \phi\right\vert
^{p}%
\]
which is integrable on $B_{CR}\left(  0\right)  $, by uniform continuity of
$\phi$,%
\[
\left\Vert \phi\left(  E\left(  -t\right)  \left(  \cdot\right)  \right)
-\phi\right\Vert _{L^{p}\left(  \mathbb{R}^{N}\right)  }\rightarrow0\text{ as
}t\rightarrow0^{+},
\]
hence for $t$ small enough%
\[
\left\Vert f\left(  E\left(  -t\right)  \left(  \cdot\right)  \right)
-f\right\Vert _{p}\leq c\varepsilon,
\]
and we are done.

(ii) Let $f\in L^{\infty}\left(  \mathbb{R}^{N}\right)  $, and let $f$ be
continuous at some point $x_{0}\in\mathbb{R}^{N}$ then%
\[
\left\vert u\left(  x,t\right)  -f\left(  x_{0}\right)  \right\vert \leq
\int_{\mathbb{R}^{N}}\frac{e^{-\left\vert z\right\vert ^{2}}}{\pi^{N/2}%
}\left\vert f\left(  E\left(  -t\right)  \left(  x-2C\left(  t\right)
^{1/2}z\right)  \right)  -f\left(  x_{0}\right)  \right\vert dz.
\]
Now, for fixed $z\in\mathbb{R}^{N}$ and $\left(  x,t\right)  \rightarrow
\left(  x_{0},0\right)  $ we have%
\begin{align*}
E\left(  -t\right)  \left(  x-2C\left(  t\right)  ^{1/2}z\right)   &
\rightarrow x_{0}\\
f\left(  E\left(  -t\right)  \left(  x-2C\left(  t\right)  ^{1/2}z\right)
\right)   &  \rightarrow f\left(  x_{0}\right)
\end{align*}
while%
\[
\frac{e^{-\left\vert z\right\vert ^{2}}}{\pi^{N/2}}\left\vert f\left(
E\left(  -t\right)  \left(  x-2C\left(  t\right)  ^{1/2}z\right)  \right)
-f\left(  x_{0}\right)  \right\vert \leq2\left\Vert f\right\Vert _{L^{\infty
}\left(  \mathbb{R}^{N}\right)  }\frac{e^{-\left\vert z\right\vert ^{2}}}%
{\pi^{N/2}}\in L^{1}\left(  \mathbb{R}^{N}\right)
\]
hence by Lebesgue's theorem%
\[
\left\vert u\left(  x,t\right)  -f\left(  x_{0}\right)  \right\vert
\rightarrow0.
\]

(iii) As in point (i) we have%
\[
\left\Vert u\left(  \cdot,t\right)  -f\right\Vert _{L^{\infty}\left(
\mathbb{R}^{N}\right)  }\leq\int_{\mathbb{R}^{N}}\frac{e^{-\left\vert
z\right\vert ^{2}}}{\pi^{N/2}}\left\Vert f\left(  E\left(  -t\right)  \left(
\cdot-2C\left(  t\right)  ^{1/2}z\right)  \right)  -f\left(  \cdot\right)
\right\Vert _{L^{\infty}\left(  \mathbb{R}^{N}\right)  }dz
\]
and as in point (ii)
\begin{align*}
&  \frac{e^{-\left\vert z\right\vert ^{2}}}{\pi^{N/2}}\left\Vert f\left(
E\left(  -t\right)  \left(  \cdot-2C\left(  t\right)  ^{1/2}z\right)  \right)
-f\left(  \cdot\right)  \right\Vert _{L^{\infty}\left(  \mathbb{R}^{N}\right)
}\\
&  \leq2\left\Vert f\right\Vert _{L^{\infty}\left(  \mathbb{R}^{N}\right)
}\frac{e^{-\left\vert z\right\vert ^{2}}}{\pi^{N/2}}\in L^{1}\left(
\mathbb{R}^{N}\right)  .
\end{align*}
Let us show that for every fixed $z$ we have%
\[
\left\Vert f\left(  E\left(  -t\right)  \left(  \cdot-2C\left(  t\right)
^{1/2}z\right)  \right)  -f\left(  \cdot\right)  \right\Vert _{L^{\infty
}\left(  \mathbb{R}^{N}\right)  }\rightarrow0\text{ as }t\rightarrow0^{+},
\]
hence by Lebesgue's theorem we will conclude the desired assertion.

For every $\varepsilon>0$ we can pick $\phi\in C_{c}^{0}\left(  \mathbb{R}%
^{N}\right)  $ such that $\left\Vert f-\phi\right\Vert _{\infty}<\varepsilon$,
then%
\begin{align*}
&  \left\Vert f\left(  E\left(  -t\right)  \left(  \cdot\right)  \right)
-f\right\Vert _{\infty}\\
&  \leq\left\Vert f\left(  E\left(  -t\right)  \left(  \cdot\right)  \right)
-\phi\left(  E\left(  -t\right)  \left(  \cdot\right)  \right)  \right\Vert
_{\infty}+\left\Vert \phi\left(  E\left(  -t\right)  \left(  \cdot\right)
\right)  -\phi\right\Vert _{\infty}+\left\Vert f-\phi\right\Vert _{\infty}\\
&  <2\varepsilon+\left\Vert \phi\left(  E\left(  -t\right)  \left(
\cdot\right)  \right)  -\phi\right\Vert _{\infty}.
\end{align*}
Since $\phi\ $is compactly supported, there exists $R>0$ such that for every
$t\in\left(  0,1\right)  $ we have $\phi\left(  E\left(  -t\right)  \left(
x\right)  \right)  -\phi\left(  x\right)  \neq0$ only if $\left\vert
x\right\vert <R$.
\[
\left\vert E\left(  -t\right)  \left(  x\right)  -x\right\vert \leq\left\vert
E\left(  -t\right)  -I\right\vert R.
\]
Since $\phi$ is uniformly continuous, for every $\varepsilon>0$ there exists
$\delta>0$ such that for $0<t<\delta$ we have
\[
\left\vert \phi\left(  E\left(  -t\right)  \left(  x\right)  \right)
-\phi\left(  x\right)  \right\vert <\varepsilon
\]
whenever $\left\vert x\right\vert <R.$ So we are done.

Let us now prove (b). To show that $u$ is well defined, smooth in $x$, and
satisfies the equation, for $\left\vert x\right\vert \leq R$ let us write
\begin{align*}
u\left(  x,t\right)   &  =\int_{\left\vert y\right\vert <2R}\Gamma\left(
x,t;y,0\right)  f\left(  y\right)  dy+\int_{\left\vert y\right\vert >2R}%
\Gamma\left(  x,t;y,0\right)  f\left(  y\right)  dy\\
&  \equiv I\left(  x,t\right)  +II\left(  x,t\right)  .
\end{align*}
Since $f$ is bounded for $\left\vert y\right\vert <2R$, reasoning like in the
proof of point (a) we see that $\mathcal{L}I\left(  x,t\right)  $ can be
computed taking the derivatives under the integral sign, so that
$\mathcal{L}I\left(  x,t\right)  =0.$ Moreover, the function $x\mapsto
I\left(  x,t\right)  $ is $C^{\infty}\left(  \mathbb{R}^{N}\right)  $.

To prove the analogous properties for $II\left(  x,t\right)  $ we have to
apply Theorem \ref{Thm bounds derivatives}, (ii): there exists $\delta
\in\left(  0,1\right)  ,C,c>0$ such that for $0<t<\delta$ and every
$x,y\in\mathbb{R}^{N}$ we have, for $n=0,1,2,...$%
\begin{equation}
\sum_{\left\vert \alpha\right\vert \leq n}\left\vert \partial_{x}^{\alpha
}\Gamma\left(  x,t;y,0\right)  \right\vert \leq\frac{C}{t^{Q/2}}%
e^{-c\frac{\left\vert x-E\left(  t\right)  y\right\vert ^{2}}{t}}\cdot\left\{
t^{-\sigma_{N}}+t^{-n\sigma_{N}}\left\vert x-E\left(  t\right)  y\right\vert
^{n}\right\}  .\nonumber
\end{equation}
Recall that $\left\vert x\right\vert <R$ and $\left\vert y\right\vert >2R$.
For $\delta$ small enough and $t\in\left(  \frac{\delta}{2},\delta\right)  $
we have%
\begin{equation}
\sum_{\left\vert \alpha\right\vert \leq n}\left\vert \partial_{x}^{\alpha
}\Gamma\left(  x,t;y,0\right)  \right\vert \leq Ce^{-c\frac{\left\vert
y\right\vert ^{2}}{t}}\cdot\left\{  1+\left\vert y\right\vert ^{n}\right\}
\nonumber
\end{equation}
with constants depending on $\delta,n$. Therefore, if $\alpha$ is the constant
appearing in the assumption (\ref{exp bound}),%
\begin{align*}
&  \int_{\left\vert y\right\vert >2R}\sum_{\left\vert \alpha\right\vert \leq
n}\left\vert \partial_{x}^{\alpha}\Gamma\left(  x,t;y,0\right)  \right\vert
\left\vert f\left(  y\right)  \right\vert dy\\
&  \leq C\int_{\left\vert y\right\vert >2R}e^{-c\frac{\left\vert y\right\vert
^{2}}{\delta}}\cdot\left\{  1+\left\vert y\right\vert ^{n}\right\}
e^{\alpha\left\vert y\right\vert ^{2}}\left\vert f\left(  y\right)
\right\vert e^{-\alpha\left\vert y\right\vert ^{2}}dy\\
&  \leq C\sup_{y\in\mathbb{R}^{N}}\left(  e^{\left(  -\frac{c}{\delta}%
+\alpha\right)  \left\vert y\right\vert ^{2}}\left\{  1+\left\vert
y\right\vert ^{n}\right\}  \right)  \cdot\int_{\mathbb{R}^{N}}\left\vert
f\left(  y\right)  \right\vert e^{-\alpha\left\vert y\right\vert ^{2}}dy
\end{align*}
which shows that for $\delta$ small enough $\mathcal{L}II\left(  x,t\right)  $
can be computed taking the derivatives under the integral sign, so that
$\mathcal{L}II\left(  x,t\right)  =0.$ Moreover, the function $x\mapsto
II\left(  x,t\right)  $ is $C^{\infty}\left(  \mathbb{R}^{N}\right)  $. This
proves (b).

(iv). For $\left\vert x_{0}\right\vert \leq R$ let us write
\[
u\left(  x,t\right)  =\int_{\left\vert y\right\vert <2R}\Gamma\left(
x,t;y,0\right)  f\left(  y\right)  dy+\int_{\left\vert y\right\vert >2R}%
\Gamma\left(  x,t;y,0\right)  f\left(  y\right)  dy\equiv I+II.
\]
Applying point (ii) to $f\left(  y\right)  \chi_{B_{2r}\left(  0\right)  }$ we
have
\[
I=\int_{\left\vert y\right\vert <2R}\Gamma\left(  x,t;y,0\right)  f\left(
y\right)  dy\rightarrow f\left(  x_{0}\right)
\]
as $\left(  x,t\right)  \rightarrow\left(  x_{0},0\right)  $. Let us show that
$II\rightarrow0.$ By (\ref{Upper Gamma t piccolo}) we have%
\[
\left\vert II\right\vert \leq\int_{\left\vert y\right\vert >2R}\frac
{1}{ct^{Q/2}}e^{-c\frac{\left\vert x-E\left(  t\right)  y\right\vert ^{2}}{t}%
}\left\vert f\left(  y\right)  \right\vert dy.
\]
For $y$ fixed with $\left\vert y\right\vert >2R$, hence $\left\vert
x_{0}-y\right\vert \neq0$, we have%
\[
\lim_{\left(  x,t\right)  \rightarrow\left(  x_{0},0\right)  }\frac{1}%
{t^{Q/2}}e^{-c\frac{\left\vert x-E\left(  t\right)  y\right\vert ^{2}}{t}%
}=\lim_{\left(  x,t\right)  \rightarrow\left(  x_{0},0\right)  }\frac
{1}{t^{Q/2}}e^{-c\frac{\left\vert x_{0}-y\right\vert ^{2}}{t}}=0.
\]
Since $\left\vert y\right\vert >2R,$ $\left\vert x_{0}\right\vert <R,$ for
$x\rightarrow x_{0}$ we can assume $\left\vert x\right\vert <\frac{3}{2}R$ and
for $t$ small enough we have $\left\vert x-E\left(  t\right)  y\right\vert
\geq c\left\vert y\right\vert $ for some $c>0$, hence%
\begin{align*}
\frac{1}{ct^{Q/2}}e^{-c\frac{\left\vert x-E\left(  t\right)  y\right\vert
^{2}}{t}}\left\vert f\left(  y\right)  \right\vert \chi_{\left\{  \left\vert
y\right\vert >2R\right\}  }  &  \leq\frac{1}{ct^{Q/2}}e^{-c_{1}\frac
{\left\vert y\right\vert ^{2}}{t}}e^{\alpha\left\vert y\right\vert ^{2}}%
\chi_{\left\{  \left\vert y\right\vert >2R\right\}  }\left\vert f\left(
y\right)  \right\vert e^{-\alpha\left\vert y\right\vert ^{2}}\\
&  \leq\frac{1}{ct^{Q/2}}e^{\left(  \alpha-\frac{c_{1}}{t}\right)  \left\vert
y\right\vert ^{2}}\chi_{\left\{  \left\vert y\right\vert >2R\right\}
}\left\{  \left\vert f\left(  y\right)  \right\vert e^{-\alpha\left\vert
y\right\vert ^{2}}\right\}
\end{align*}
for $t$ small enough%
\begin{align*}
&  \leq\frac{1}{ct^{Q/2}}e^{-\frac{c_{1}}{2t}\left\vert y\right\vert ^{2}}%
\chi_{\left\{  \left\vert y\right\vert >2R\right\}  }\left\{  \left\vert
f\left(  y\right)  \right\vert e^{-\alpha\left\vert y\right\vert ^{2}}\right\}
\\
&  \leq\frac{1}{ct^{Q/2}}e^{-\frac{2c_{1}}{t}R^{2}}\left\{  \left\vert
f\left(  y\right)  \right\vert e^{-\alpha\left\vert y\right\vert ^{2}%
}\right\}  \leq c\left\vert f\left(  y\right)  \right\vert e^{-\alpha
\left\vert y\right\vert ^{2}}\in L^{1}\left(  \mathbb{R}^{N}\right)  .
\end{align*}
Hence by Lebesgue's theorem $II\rightarrow0$ as $\left(  x,t\right)
\rightarrow\left(  x_{0},0\right)  ,$ and we are done.
\end{proof}

\begin{remark}
If $f$ is an unbounded continuous function satisfying (\ref{exp bound}), the
solution of the Cauchy problem can blow up in finite time, already for the
heat operator: the solution of%
\[
\left\{
\begin{array}
[c]{l}%
u_{t}-u_{xx}=0\text{ in }\mathbb{R}\times\left(  0,+\infty\right) \\
u\left(  x,0\right)  =e^{x^{2}}%
\end{array}
\right.
\]
is given by%
\[
u\left(  x,t\right)  =\frac{1}{\sqrt{4\pi t}}\int_{\mathbb{R}}e^{-\frac
{\left(  x-y\right)  ^{2}}{4t}}e^{y^{2}}dy=\frac{e^{\frac{x^{2}}{1-4t}}}%
{\sqrt{1-4t}}\text{ for }0<t<\frac{1}{4},
\]
with $u\left(  x,t\right)  \rightarrow+\infty$ for $t\rightarrow\left(
\frac{1}{4}\right)  ^{-}.$
\end{remark}

\bigskip

We next prove a uniqueness results for the Cauchy problem (\ref{PdC}). In the
following we consider solutions defined in some possibly bounded time interval
$[0,T).$

\begin{theorem}
[Uniqueness]\label{Thm uniqueness} Let $\mathcal{L}$ be an operator of the
form (\ref{L}) satisfying the assumptions (H1)-(H2), let $T\in(0,+\infty]$,
and let either $f\in C(\mathbb{R}^{N})$, or $f\in L^{p}(\mathbb{R}^{N})$ with
$1\leq p<+\infty$.

If $u_{1}$ and $u_{2}$ are two solutions to the same Cauchy problem%
\begin{equation}
\left\{
\begin{array}
[c]{l}%
\mathcal{L}u=0\text{ in }\mathbb{R}^{N}\times\left(  0,T\right)  ,\\
u\left(  \cdot,0\right)  =f,
\end{array}
\right.  \label{PdCg}%
\end{equation}
satisfying (\ref{cond uniqueness}) for some $C>0$, then $u_{1}\equiv u_{2}$ in
$\mathbb{R}^{N}\times\left(  0,T\right)  $.
\end{theorem}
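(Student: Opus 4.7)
The plan is a duality argument: fix $(x_0, t_0) \in \mathbb{R}^N \times (0, T)$ and pair the difference $w := u_1 - u_2$ against $\Gamma(x_0, t_0; \cdot, \cdot)$, which by Theorem \ref{Thm L*Gamma=0} solves the backward equation $\mathcal{L}^* \Gamma = 0$ in the pole variables. Since $u_1$ and $u_2$ solve the same Cauchy problem (\ref{PdCg}), $w$ is a solution of $\mathcal{L}w = 0$ in $\mathbb{R}^N \times (0, T)$ with zero initial datum in the sense appropriate to $f$, and still satisfies (\ref{cond uniqueness}) with the same constant $C$; the goal is to prove $w(x_0, t_0) = 0$.

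Choose spatial cutoffs $\chi_R \in C_c^\infty(\mathbb{R}^N)$ with $\chi_R \equiv 1$ on $B_R$, $\mathrm{supp}\,\chi_R \subset B_{2R}$, $|\partial^\alpha \chi_R| \leq c_\alpha R^{-|\alpha|}$, and for $0 < s < t_0$ set
\[
\Phi_R(s) := \int_{\mathbb{R}^N} \Gamma(x_0, t_0; y, s)\,\chi_R(y)\,w(y, s)\,dy.
\]
Using $\mathcal{L}w = 0$ to express $\partial_s w$ and $\mathcal{L}^* \Gamma = 0$ to express $\partial_s \Gamma$, substituting into the $s$-derivative of $\Phi_R$, then integrating the $\sum a_{ij}\partial_{y_i y_j}^2$ contributions by parts twice to transfer both derivatives onto $\chi_R$ (the coefficients $a_{ij}$ are $y$-independent) and the drift $\sum b_{jk} y_k \partial_{y_j}$ contributions by parts once (producing a compensating $-\mathrm{Tr}(B)\,\Gamma\chi_R w$ that cancels the $\mathrm{Tr}(B)\,\Gamma$ piece in $\partial_s \Gamma$), every term not involving a derivative of $\chi_R$ cancels, leaving
\[
\frac{d\Phi_R}{ds}(s) = \int_{R \leq |y| \leq 2R} \mathcal{E}_R(y, s)\,dy,
\]
where $\mathcal{E}_R$ is a finite sum of products of $\Gamma$ or $\partial_y \Gamma$ with $w$, a derivative $\partial^\alpha \chi_R$ with $1 \leq |\alpha| \leq 2$, and at most one linear factor $y_k$ from the drift. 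Differentiation under the integral and the integrations by parts are legitimate by Theorem \ref{Thm bounds derivatives} (spatial $C^\infty$ regularity and strong Gaussian decay of $\Gamma$) and the time regularity in Definition \ref{Def solution}.

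Integrating this identity over $s \in [\epsilon, t_0 - \epsilon']$ and letting $\epsilon, \epsilon' \to 0^+$ with $R$ fixed gives
\[
w(x_0, t_0) = -\int_0^{t_0} \int_{R \leq |y| \leq 2R} \mathcal{E}_R(y, s)\,dy\,ds,
\]
since $\Phi_R(\epsilon) \to 0$ (because $w(\cdot, s) \to 0$ in $L^1(B_{2R})$ as $s \to 0^+$, in whichever of the two senses of Definition \ref{Def Cauchy} applies, while $\Gamma(x_0, t_0; y, \epsilon)$ stays bounded on $B_{2R}$ as $t_0 - \epsilon$ is bounded away from $0$) and $\Phi_R(t_0 - \epsilon') \to \chi_R(x_0)\,w(x_0, t_0) = w(x_0, t_0)$ for $R > |x_0|$, by Theorem \ref{Thm Cauchy} applied to the bounded continuous profile $\chi_R\,w(\cdot, t_0 - \epsilon')$. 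Now let $R \to \infty$: Theorem \ref{Thm comparison Gammas} together with the explicit Gaussian form (\ref{G_nu polo0}) of $\Gamma_{\nu^{-1}}$ gives $|\Gamma(x_0, t_0; y, s)| + |\partial_y \Gamma(x_0, t_0; y, s)| \leq C_1 (1+|y|)\,e^{-c|y|^2/(t_0 - s)}$ for $|y| \geq 2|x_0|$, so on $|y| \sim R$ one has $|\mathcal{E}_R(y, s)| \leq C_2\,e^{-c|y|^2/(t_0 - s)}\,|w(y, s)|$; writing this as $e^{(C - c/(t_0-s))|y|^2}\,\bigl(|w(y, s)|\,e^{-C|y|^2}\bigr)$ and choosing $t_0 < c/C$, the first factor decays like $e^{-\gamma R^2}$ on $\mathrm{supp}\,\nabla \chi_R$, while the second is integrable on $\mathbb{R}^N \times (0, t_0)$ by (\ref{cond uniqueness}); dominated convergence then forces $w(x_0, t_0) = 0$ for such $t_0$. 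For arbitrary $t_0 < T$, iterate on time slabs of width $\tau < c/C$: once $w \equiv 0$ on $[0, k\tau]$, the restriction of $w$ to $[k\tau, T]$ is a solution with zero initial datum at $s = k\tau$ and still satisfies (\ref{cond uniqueness}), so the same argument propagates the vanishing to $[k\tau, (k+1)\tau]$, covering $[0, T)$ in finitely many steps.

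The principal difficulty is precisely this balance in the final step: the Gaussian decay rate $c/(t_0 - s)$ of $\Gamma$ in its pole variable degrades as $s \to 0$, which forces the single-slab window to shrink to a fixed length depending on $C$ (but not on $w$), making iteration unavoidable. A secondary technical point is that both $w$ and $\Gamma$ are only absolutely continuous (not $C^1$) in time, so $d\Phi_R/ds$ must be obtained by plugging the equations $\mathcal{L}w = 0$ and $\mathcal{L}^*\Gamma = 0$ into the a.e. identities for $\partial_s w$ and $\partial_s \Gamma$ rather than by a formal chain rule; the resulting identity is then integrated in $s$, and the limits throughout rely on dominated convergence with the Gaussian bounds of Theorem \ref{Thm bounds derivatives}.
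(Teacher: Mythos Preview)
Your proposal is correct and follows essentially the same duality argument as the paper: pair the difference $w$ against $\Gamma(x_0,t_0;\cdot,\cdot)$ with a spatial cutoff, use Green's identity (equivalently, differentiate $\Phi_R$ and integrate in $s$) so that only the commutator terms on the annulus survive, pass to the time limits via the approximate-identity behavior of $\Gamma$ and the zero initial datum, then let $R\to\infty$ by balancing the Gaussian decay of $\Gamma$ against the weight in (\ref{cond uniqueness}) on a short slab and iterate. The only cosmetic differences are that the paper uses a cutoff on an annulus of fixed width and writes the argument as a single space--time Green identity, and it makes explicit the step you suppress, namely that the singular prefactor $(t_0-s)^{-Q/2-\sigma_N}$ in the bound for $\partial_y\Gamma$ (which comes from Theorem~\ref{Thm bounds derivatives}, not from Theorem~\ref{Thm comparison Gammas}) is absorbed into the exponential once $|y|\geq 1$.
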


\begin{proof}
Because of the linearity of $\mathcal{L}$, it is enough to prove that if the
function $u:=u_{1}-u_{2}$ satisfies (\ref{PdCg}) with $f=0$ and
(\ref{cond uniqueness}), then $u(x,t)=0$ for every $(x,t)\in\mathbb{R}%
\times\left(  0,+\infty\right)  $. We will prove that $u=0$ in a suitably thin
strip $\mathbb{R}\times\left(  0,t_{1}\right)  $, where $t_{1}$ only depends
on $\mathcal{L}$ and $C$, the assertion then will follow by iterating this argument.

Let $t_{1}\in(0,T]$ be a fixed mumber that will be specified later. For every
positive $R$ we consider a function $h_{R}\in C^{\infty}(\mathbb{R}^{N})$,
such that $h_{R}\left(  \xi\right)  =1$ whenever $\left\vert \xi\right\vert
\leq R$, $h_{R}\left(  \xi\right)  =0$ for every $\left\vert \xi\right\vert
\geq R+1/2$ and that $0\leq h_{R}\left(  \xi\right)  \leq1$. We also assume
that all the first and second order derivatives of $h_{R}$ are bounded by a
constant that doesn't depend on $R$. We fix a point $(y,s)\in\mathbb{R}%
^{N}\times\left(  0,t_{1}\right)  $, and we let $v$ denote the function
\[
v\left(  \xi,\tau\right)  :=h_{R}\left(  \xi\right)  \Gamma\left(
y,s;\xi,\tau\right)  .
\]
For $\varepsilon\in\left(  0,t_{1}/2\right)  $ we define the domain
\[
Q_{R,\varepsilon}:=\left\{  (\xi,\tau)\in\mathbb{R}^{N}\times\left(
0,t_{1}\right)  :\left\vert \xi\right\vert <R+1,\varepsilon<\tau
<s-\varepsilon\right\}
\]
and we also let $Q_{R}=Q_{R,0}$. Note that in $Q_{R,\varepsilon}$ the function
$v\left(  \xi,\tau\right)  $ is smooth in $\xi$ and Lipschitz continuous in
$\tau$.

By (\ref{L}) and (\ref{L star}) we can compute the following Green identity,
with $u$ and $v$ as above.
\begin{align*}
&  v\mathcal{L}u-u\mathcal{L}^{\ast}v\\
&  =\sum_{i,j=1}^{q}a_{ij}\left(  t\right)  \left(  v\partial_{x_{i}x_{j}}%
^{2}u-u\partial_{x_{i}x_{j}}^{2}v\right)  +\sum_{k,j=1}^{N}b_{jk}x_{k}\left(
v\partial_{x_{j}}u+u\partial_{x_{j}}v\right) \\
&  -\left(  v\partial_{t}u+u\partial_{t}v\right)  +uv\operatorname*{Tr}B\\
&  =\sum_{i,j=1}^{q}\partial_{x_{i}}\left(  a_{ij}\left(  t\right)  \left(
v\partial_{x_{j}}u-u\partial_{x_{j}}v\right)  \right)  +\sum_{k,j=1}%
^{N}\partial_{x_{j}}\left(  b_{jk}x_{k}uv\right)  -\partial_{t}\left(
uv\right)  .
\end{align*}
We now integrate the above identity on $Q_{R,\varepsilon}$ and apply the
divergence theorem, noting that $v,\partial_{x_{1}}v,\dots,\partial_{x_{N}}v$
vanish on the lateral part of the boundary of $Q_{R,\varepsilon}$, by the
properties of $h_{R}$. Hence:%
\begin{equation}%
\begin{split}
&  \int_{Q_{R,\varepsilon}}v(\xi,\tau)\mathcal{L}u(\xi,\tau)-u(\xi
,\tau)\mathcal{L}^{\ast}v(\xi,\tau)d\xi\,d\tau\\
&  =\int_{\mathbb{R}^{N}}u(\xi,\varepsilon)v(\xi,\varepsilon)d\xi
-\int_{\mathbb{R}^{N}}u(\xi,s-\varepsilon)v(\xi,s-\varepsilon)d\xi.
\end{split}
\label{Green-identity}%
\end{equation}
Concerning the last integral, since the function $y\mapsto h_{R}(y)u(y,s)$ is
continuous and compactly supported, by Theorem \ref{Thm Cauchy}, (iii) we have
that
\[
\int_{\mathbb{R}^{N}}u(\xi,s-\varepsilon)v(\xi,s-\varepsilon)d\xi
=\int_{\mathbb{R}^{N}}u(\xi,s-\varepsilon)h_{R}(\xi)\Gamma(y,s;\xi
,s-\varepsilon)d\xi\rightarrow h_{R}(y)u(y,s)
\]
as $\varepsilon\rightarrow0^{+}$. Moreover
\[
\int_{\mathbb{R}^{N}}u(\xi,\varepsilon)v(\xi,\varepsilon)d\xi=\int%
_{\mathbb{R}^{N}}u(\xi,\varepsilon)h_{R}(\xi)\Gamma(y,s;\xi,\varepsilon
)d\xi\rightarrow0,
\]
as $\varepsilon\rightarrow0^{+}$, since $\Gamma$ is a bounded function
whenever $(\xi,\varepsilon)\in\mathbb{R}^{N}\times\left(  0,s/2\right)  $, and
$u(\cdot,\varepsilon)h_{R}\rightarrow0$ either uniformly, if the inital datum
is assumed by continuity, or in the $L^{p}$ norm. Using the fact that
$\mathcal{L}u=0$ and $u(\cdot,0)=0$, we conclude that, as $\left\vert
y\right\vert <R$, (\ref{Green-identity}) gives%
\begin{equation}
u(y,s)=\int_{Q_{R}}u(\xi,\tau)\mathcal{L}^{\ast}v(\xi,\tau)d\xi\,d\tau.
\label{Green-identity-2}%
\end{equation}
Since $\mathcal{L}^{\ast}\Gamma(y,s;\xi,\tau)=0$ whenever $\tau<s$, we have%
\begin{align*}
&  \mathcal{L}^{\ast}\left(  h_{R}\Gamma\right)  =\sum_{i,j=1}^{q}%
a_{ij}\left(  \tau\right)  \partial_{\xi_{i}\xi_{j}}^{2}\left(  h_{R}%
\Gamma\right)  -\sum_{k,j=1}^{N}b_{jk}\xi_{k}\partial_{\xi_{j}}\left(
h_{R}\Gamma\right)  -h_{R}\left(  \Gamma\operatorname*{Tr}B+\partial_{\tau
}\Gamma\right) \\
&  =\Gamma\sum_{i,j=1}^{q}a_{ij}\left(  \tau\right)  \partial_{\xi_{i}\xi_{j}%
}^{2}h_{R}+2\sum_{i,j=1}^{q}a_{ij}\left(  \tau\right)  \left(  \partial
_{\xi_{i}}h_{R}\right)  \left(  \partial_{\xi_{j}}\Gamma\right)  -\Gamma
\sum_{k,j=1}^{N}b_{jk}\xi_{k}\partial_{\xi_{j}}h_{R}%
\end{align*}
therefore the identity (\ref{Green-identity-2}) yields, since $\partial
_{\xi_{i}}h_{R}=0$ for $\left\vert \xi\right\vert \leq R$,%
\begin{equation}%
\begin{split}
u(y,s)=  &  \int_{Q_{R}\backslash Q_{R-1}}u(\xi,\tau)\left\{  \sum_{i,j=1}%
^{q}a_{i,j}(\tau)\cdot\right. \\
\cdot &  \left[  2\partial_{\xi_{i}}h_{R}(\xi)\partial_{\xi_{j}}\Gamma\left(
y,s;\xi,\tau\right)  +\Gamma(y,s;\xi,\tau)\partial_{\xi_{i}\xi_{j}}h_{R}%
(\xi)\right] \\
&  \left.  -\sum_{k,j=1}^{N}b_{jk}\xi_{k}\partial_{\xi_{j}}h_{R}(\xi
)\Gamma(y,s;\xi,\tau)\right\}  d\xi\,d\tau.
\end{split}
\label{Green-identity-3}%
\end{equation}
We claim that (\ref{Green-identity-3}) implies
\begin{equation}
\left\vert u(y,s)\right\vert \leq\int_{Q_{R}\backslash Q_{R-1}}C_{1}%
|u(\xi,\tau)|e^{-C|\xi|^{2}}d\xi\,d\tau, \label{unici Claim}%
\end{equation}
for some positive constant $C_{1}$ only depending on the operator
$\mathcal{L}$ and on the uniform bound f the derivatives of $h_{R}$, provided
that $t_{1}$ is sufficiently small. Our assertion then follows by letting
$R\rightarrow+\infty$.

So we are left to prove (\ref{unici Claim}). By Proposition \ref{Prop lim t0}
we know that, for suitable constants $\delta\in\left(  0,1\right)  $,
$c_{1},c_{2}>0$, for $0<s-\tau\leq\delta$ and every $y,\xi\in\mathbb{R}^{N}$
we have:%
\begin{equation}
\Gamma\left(  y,s,\xi,\tau\right)  \leq\frac{c_{1}}{\left(  s-\tau\right)
^{Q/2}}e^{-c_{2}\frac{\left\vert y-E\left(  s-\tau\right)  \xi\right\vert
^{2}}{s-\tau}}. \label{unici 1}%
\end{equation}
Moreover, from the computation in the proof of Theorem \ref{Thm L*Gamma=0} we
read that%
\[
\nabla_{\xi}\Gamma\left(  y,s;\xi,\tau\right)  =-\frac{1}{2}\Gamma\left(
y,s;\xi,\tau\right)  C^{\prime}\left(  s,\tau\right)  \left(  \xi-E\left(
\tau-s\right)  y\right)
\]
where
\[
C^{\prime}\left(  s,\tau\right)  =E\left(  s-\tau\right)  ^{T}C\left(
s,\tau\right)  ^{-1}E\left(  s-\tau\right)  .
\]
Hence%
\[
\nabla_{\xi}\Gamma\left(  y,s;\xi,\tau\right)  =\frac{1}{2}\Gamma\left(
y,s;\xi,\tau\right)  E\left(  s-\tau\right)  ^{T}C\left(  s,\tau\right)
^{-1}\left(  y-E\left(  s-\tau\right)  \xi\right)  .
\]
By (\ref{fq inverse}) we have inequality for matrix norms
\[
\left\Vert C\left(  s,\tau\right)  ^{-1}\right\Vert \leq c\left\Vert
C_{0}\left(  s-\tau\right)  ^{-1}\right\Vert
\]
and, for $0<s-\tau\leq\delta$
\[
\leq c\left\Vert C_{0}^{\ast\left(  s-\tau\right)  -1}\right\Vert \leq
c\left\Vert D_{0}\left(  s-\tau\right)  \right\Vert ^{-1}%
\]
hence%
\begin{align}
\left\vert \nabla_{\xi}\Gamma\left(  y,s;\xi,\tau\right)  \right\vert  &  \leq
c\Gamma\left(  y,s;\xi,\tau\right)  \left\Vert D_{0}\left(  s-\tau\right)
\right\Vert ^{-1}\left\vert y-E\left(  s-\tau\right)  \xi\right\vert
\nonumber\\
&  \leq\frac{c_{1}}{\left(  s-\tau\right)  ^{\frac{Q}{2}+\sigma_{N}}}%
e^{-c_{2}\frac{\left\vert y-E\left(  s-\tau\right)  \xi\right\vert ^{2}%
}{s-\tau}}\left\vert y-E\left(  s-\tau\right)  \xi\right\vert .
\label{unici 2}%
\end{align}
Now, in the integral in (\ref{Green-identity-3}) we have $R<\left\vert
\xi\right\vert <R+1$. Then for $\left\vert y\right\vert <R/2$ and
$0<s-\tau\leq\delta<1$ we have%
\begin{align*}
\frac{\left\vert \xi\right\vert }{2}  &  \leq\left\vert \xi\right\vert
-\left\vert y\right\vert \leq\left\vert y-\xi\right\vert \leq\left\vert
y-E\left(  s-\tau\right)  \xi\right\vert +\left\vert E\left(  s-\tau\right)
\xi-\xi\right\vert \\
&  \leq\left\vert y-E\left(  s-\tau\right)  \xi\right\vert +\left\Vert
E\left(  s-\tau\right)  -I\right\Vert \left\vert \xi\right\vert \leq\left\vert
y-E\left(  s-\tau\right)  \xi\right\vert +\frac{\left\vert \xi\right\vert }%
{4}.
\end{align*}
Hence%
\[
\left\vert y-E\left(  s-\tau\right)  \xi\right\vert \geq\frac{\left\vert
\xi\right\vert }{4}.
\]
Moreover%
\[
\left\vert y-E\left(  s-\tau\right)  \xi\right\vert \leq\left\vert
y\right\vert +c\left\vert \xi\right\vert \leq c_{1}\left\vert \xi\right\vert
.
\]
Hence (\ref{unici 1})-(\ref{unici 2}) give%
\begin{align*}
\Gamma\left(  y,s,\xi,\tau\right)   &  \leq\frac{c_{1}}{\left(  s-\tau\right)
^{Q/2}}e^{-c_{3}\frac{\left\vert \xi\right\vert ^{2}}{s-\tau}}\\
\left\vert \partial_{\xi_{j}}\Gamma\left(  y,s;\xi,\tau\right)  \right\vert
&  \leq\frac{c_{1}}{\left(  s-\tau\right)  ^{\frac{Q}{2}+\sigma_{N}}%
}\left\vert \xi\right\vert e^{-c_{3}\frac{\left\vert \xi\right\vert ^{2}%
}{s-\tau}}.
\end{align*}

Therefore (\ref{Green-identity-3}) gives%
\[
\left\vert u(y,s)\right\vert \leq\int_{Q_{R}\backslash Q_{R-1}}|u(\xi
,\tau)|\left\{  \frac{c_{1}}{\left(  s-\tau\right)  ^{\frac{Q}{2}+\sigma_{N}}%
}\left\vert \xi\right\vert e^{-c_{3}\frac{\left\vert \xi\right\vert ^{2}%
}{s-\tau}}\right\}  d\xi\,d\tau.
\]
We can assume $R>1,$ writing, for $0<s-\tau<1$ and every $\xi\in\mathbb{R}%
^{N}$ with $\left\vert \xi\right\vert >1$,%
\begin{align*}
&  \frac{c_{1}}{\left(  s-\tau\right)  ^{\frac{Q}{2}+\sigma_{N}}}\left\vert
\xi\right\vert e^{-c_{3}\frac{\left\vert \xi\right\vert ^{2}}{s-\tau}}%
=\frac{c_{1}}{\left(  s-\tau\right)  ^{\frac{Q}{2}+\sigma_{N}}}\left\vert
\xi\right\vert e^{-c_{3}\frac{1}{s-\tau}}e^{-c_{3}\frac{\left\vert
\xi\right\vert ^{2}-1}{s-\tau}}\\
&  \leq c\left\vert \xi\right\vert e^{-c_{3}\frac{\left\vert \xi\right\vert
^{2}-1}{s-\tau}}\leq c\left\vert \xi\right\vert e^{-c_{3}\left(  \left\vert
\xi\right\vert ^{2}-1\right)  }=c_{4}\left\vert \xi\right\vert e^{-c_{3}%
\left\vert \xi\right\vert ^{2}}\leq c_{5}e^{-c_{6}\left\vert \xi\right\vert
^{2}}.
\end{align*}
This implies the Claim, so we are done.
\end{proof}

The link between the existence result of Theorem \ref{Thm Cauchy} and the
uniqueness result of Theorem \ref{Thm uniqueness} is completed by the following

\begin{proposition}
\label{Prop cond uniqueness}(a) Let $f$ be a bounded continuous function on
$\mathbb{R}^{N}$, or a function belonging to $L^{p}\left(  \mathbb{R}%
^{N}\right)  $ for some $p\in\lbrack1,\infty)$.\ Then the function
\[
u\left(  x,t\right)  =\int_{\mathbb{R}^{N}}\Gamma\left(  x,t;y,0\right)
f\left(  y\right)  dy
\]
satisfies the condition (\ref{cond uniqueness}) for every fixed constants
$T,C>0.$

(b) If $f\in C^{0}\left(  \mathbb{R}^{N}\right)  $ satisfies the condition
(\ref{exp bound}) for some constant $\alpha>0$ then the function $u$ satisfies
(\ref{cond uniqueness}) for some $T,C>0.$
\end{proposition}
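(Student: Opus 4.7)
The plan is to exploit the representation $u(x,t) = \int_{\mathbb{R}^N} \Gamma(x,t;y,0) f(y)\, dy$ and, using Fubini, reduce both parts to an estimate of the $x$-integral of $\Gamma$ against the Gaussian weight $e^{-C|x|^2}$. For (a), if $f$ is bounded continuous, the mass identity $\int \Gamma(x,t;y,0)\, dy = 1$ from Proposition \ref{Prop integral Gamma dx} gives $|u(x,t)| \leq \|f\|_\infty$, so the conclusion is immediate for every $T,C>0$. If $f \in L^p(\mathbb{R}^N)$ with $p \in [1,\infty)$, the Minkowski-type computation already carried out in the proof of Theorem \ref{Thm Cauchy} yields $\|u(\cdot,t)\|_{L^p} \leq c(T) \|f\|_{L^p}$ for every $t \in (0,T)$; applying H\"older's inequality in $x$ (or simply bounding $e^{-C|x|^2} \leq 1$ when $p=1$) shows that $\int |u(x,t)| e^{-C|x|^2}\, dx$ is uniformly bounded in $t$, hence integrable over $(0,T)$, for any prescribed $T,C > 0$.

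For (b), I would first apply the comparison $\Gamma \leq \nu^{-N}\Gamma_{\nu^{-1}}$ of Theorem \ref{Thm comparison Gammas} and Fubini to bound
\[
\int_0^T\!\int_{\mathbb{R}^N} |u(x,t)| e^{-C|x|^2}\, dx\, dt \leq \nu^{-N} \int_{\mathbb{R}^N} |f(y)| \int_0^T I(y,t)\, dt\, dy,
\]
where $I(y,t) := \int_{\mathbb{R}^N} e^{-C|x|^2}\, \Gamma_{\nu^{-1}}(x,t;y,0)\, dx$. Since $\Gamma_{\nu^{-1}}(x,t;y,0)$ is an explicit Gaussian in $x - E(t)y$, completing the square and applying the classical identity $\int e^{-x^T A x - (x-z)^T B (x-z)}\, dx = (\pi^N/\det(A+B))^{1/2}\exp(-z^T B (A+B)^{-1} A z)$ with $A = CI$, $B = (\nu/4) C_0(t)^{-1}$, $z = E(t) y$ would give
\[
I(y,t) = \frac{\nu^{N/2}\, e^{-t\operatorname*{Tr} B}}{4^{N/2}\sqrt{\det\bigl(C\, C_0(t) + (\nu/4) I\bigr)}}\, \exp\!\left(-\tfrac{\nu C}{4} (E(t) y)^T C_0(t)^{-1}\bigl[CI + \tfrac{\nu}{4} C_0(t)^{-1}\bigr]^{-1} E(t) y\right).
\]
The normalizing factor is uniformly bounded on $t \in [0,T]$ since $C\, C_0(t) + (\nu/4) I \geq (\nu/4) I$.

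The main obstacle is to bound from below the quadratic form in the exponent, which amounts to controlling the smallest eigenvalue of $M(CI + (\nu/4) M)^{-1}$ with $M := C_0(t)^{-1}$. Since $\|C_0(t)\| \leq c t$ for $t \in (0,\delta]$ by Proposition \ref{Prop LP}, the smallest eigenvalue of $M$ is at least $1/(ct)$, and as the function $\mu \mapsto \mu/(C + (\nu/4)\mu)$ is increasing on $(0,\infty)$, this yields $M(CI + (\nu/4) M)^{-1} \geq (Cct + \nu/4)^{-1} I$. Combined with $|E(t) y|^2 \geq c_E |y|^2$ for $t \in [0,T]$ with $T$ small and $c_E > 0$, this gives $I(y,t) \leq K_1 \exp(-K(T,C) |y|^2)$ with $K(T,C) = \nu C c_E / [4(CcT + \nu/4)]$. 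To conclude, I would first fix $C \geq 2\alpha/c_E$ and then choose $T$ small enough that $CcT \leq \nu/4$, so that $K(T,C) \geq Cc_E/2 \geq \alpha$; the overall integral is then bounded by $T K_1 \nu^{-N} \int |f(y)| e^{-\alpha|y|^2}\, dy < \infty$ by assumption (\ref{exp bound}). The delicate point is the joint calibration of $T$ and $C$ against $\alpha$, which genuinely uses the quantitative short-time asymptotics of $C_0(t)$ from Proposition \ref{Prop LP} (mere positivity of $C_0(t)$ would not suffice, since $M$ must be large for the matrix $M(CI + (\nu/4)M)^{-1}$ to stay uniformly bounded below).
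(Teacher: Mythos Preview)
Your argument for part (a) is correct and essentially matches the paper's: the bounded-continuous case is identical, and for $f\in L^{p}$ the paper rewrites $u$ as a convolution and applies Young plus H\"older, which is the same mechanism you invoke via the $L^{p}$ bound from Theorem~\ref{Thm Cauchy}.

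For part (b) your proof is correct but follows a genuinely different route. The paper works directly with $\Gamma$ (not $\Gamma_{\nu^{-1}}$), performs the change of variables $y=E(-t)(x-2C(t)^{1/2}z)$ to rewrite $u$, and then estimates the resulting triple integral by elementary one-variable Gaussian bounds, arriving at $\int_{0}^{T}\int|f(w)|e^{-\beta|E(t)w|^{2}}e^{c_{2}\beta^{2}t|w|^{2}}\,dw\,dt$; the choice $\beta=4\alpha$ and $T$ small closes the argument. You instead pass to $\Gamma_{\nu^{-1}}$ via Theorem~\ref{Thm comparison Gammas}, compute the Gaussian integral $\int e^{-C|x|^{2}}\Gamma_{\nu^{-1}}(x,t;y,0)\,dx$ exactly by completing the square, and control the resulting matrix-valued quadratic form through a spectral argument on $M(CI+\tfrac{\nu}{4}M)^{-1}$. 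Your approach is more structural---it isolates precisely which feature of $C_{0}(t)$ matters (the bound $\|C_{0}(t)\|\leq ct$, which incidentally follows immediately from the integral definition of $C_{0}$ rather than from Proposition~\ref{Prop LP} as you cite)---while the paper's approach is more hands-on and avoids the matrix Gaussian identity and the eigenvalue monotonicity step. Both yield the same calibration pattern: first pick the weight exponent large relative to $\alpha$, then shrink $T$.
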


This means that in the class of functions satisfying (\ref{cond uniqueness})
there exists one and only one solution to the Cauchy problem, under any of the
above assumptions on the initial datum $f$.

\bigskip

\begin{proof}
(a) If $f$ is bounded continuous we simply have%
\[
\left\vert u\left(  x,t\right)  \right\vert \leq\left\Vert f\right\Vert
_{C_{b}^{0}\left(  \mathbb{R}^{N}\right)  }\int_{\mathbb{R}^{N}}\Gamma\left(
x,t;y,0\right)  dy=\left\Vert f\right\Vert _{C_{b}^{0}\left(  \mathbb{R}%
^{N}\right)  }%
\]
by Proposition \ref{Prop integral Gamma dx}. Hence (\ref{cond uniqueness})
holds for every fixed $C,T>0.$

Let now $f\in L^{p}\left(  \mathbb{R}^{N}\right)  $ for some $p\in
\lbrack1,\infty)$. Let us write%
\begin{align*}
u\left(  x,t\right)   &  =\frac{e^{-t\operatorname*{Tr}B}}{\left(
4\pi\right)  ^{N/2}\sqrt{\det C\left(  t\right)  }}\int_{\mathbb{R}^{N}%
}e^{-\frac{1}{4}\left(  x-E\left(  t\right)  y\right)  ^{T}C\left(  t\right)
^{-1}\left(  x-E\left(  t\right)  y\right)  }f\left(  y\right)  dy\\
&  =\frac{e^{-t\operatorname*{Tr}B}}{\left(  4\pi\right)  ^{N/2}\sqrt{\det
C\left(  t\right)  }}\int_{\mathbb{R}^{N}}e^{-\frac{1}{4}\left(  E\left(
-t\right)  x-y\right)  ^{T}C^{\prime}\left(  t\right)  \left(  E\left(
-t\right)  x-y\right)  }f\left(  y\right)  dy\\
&  =\frac{e^{-t\operatorname*{Tr}B}}{\left(  4\pi\right)  ^{N/2}\sqrt{\det
C\left(  t\right)  }}\left(  k_{t}\ast f\right)  \left(  E\left(  -t\right)
x\right)
\end{align*}
having set%
\[
k_{t}\left(  x\right)  =e^{-\frac{1}{4}x^{T}C^{\prime}\left(  t\right)  x}.
\]
Then
\begin{align}
&  \int_{0}^{T}\left(  \int_{\mathbb{R}^{N}}\left\vert u\left(  x,t\right)
\right\vert e^{-C\left\vert x\right\vert ^{2}}dx\right)  dt\nonumber\\
&  \leq\int_{0}^{T}\frac{e^{-t\operatorname*{Tr}B}}{\left(  4\pi\right)
^{N/2}\sqrt{\det C\left(  t\right)  }}\left(  \int_{\mathbb{R}^{N}}\left\vert
\left(  k_{t}\ast f\right)  \left(  E\left(  -t\right)  x\right)  \right\vert
e^{-C\left\vert x\right\vert ^{2}}dx\right)  dt. \label{ineq 1}%
\end{align}
Applying H\"{o}lder inequality with $q^{-1}+p^{-1}=1$ and Young's inequality
we get:%
\begin{align}
&  \int_{\mathbb{R}^{N}}\left\vert \left(  k_{t}\ast f\right)  \left(
E\left(  -t\right)  x\right)  \right\vert e^{-C\left\vert x\right\vert ^{2}%
}dx\nonumber\\
E\left(  -t\right)  x  &  =y;x=E\left(  t\right)  y;dx=e^{-t\operatorname{Tr}%
B}dy\nonumber\\
&  =e^{-t\operatorname{Tr}B}\int_{\mathbb{R}^{N}}\left\vert \left(  k_{t}\ast
f\right)  \left(  y\right)  \right\vert e^{-C\left\vert E\left(  t\right)
y\right\vert ^{2}}dy\nonumber\\
&  \leq e^{-t\operatorname{Tr}B}\left\Vert k_{t}\ast f\right\Vert
_{L^{p}\left(  \mathbb{R}^{N}\right)  }\left\Vert e^{-C\left\vert E\left(
t\right)  y\right\vert ^{2}}\right\Vert _{L^{q}\left(  \mathbb{R}^{N}\right)
}\nonumber\\
&  \leq c\left(  q,T\right)  e^{-t\operatorname{Tr}B}\left\Vert f\right\Vert
_{L^{p}\left(  \mathbb{R}^{N}\right)  }\left\Vert k_{t}\right\Vert
_{L^{1}\left(  \mathbb{R}^{N}\right)  } \label{ineq 2}%
\end{align}
and inserting (\ref{ineq 2}) into (\ref{ineq 1}) we have%
\begin{align*}
&  \int_{0}^{T}\left(  \int_{\mathbb{R}^{N}}\left\vert u\left(  x,t\right)
\right\vert e^{-C\left\vert x\right\vert ^{2}}dx\right)  dt\\
&  \leq\int_{0}^{T}\frac{e^{-t\operatorname*{Tr}B}}{\left(  4\pi\right)
^{N/2}\sqrt{\det C\left(  t\right)  }}c\left(  q,T\right)
e^{-t\operatorname{Tr}B}\left\Vert f\right\Vert _{L^{p}\left(  \mathbb{R}%
^{N}\right)  }\int_{\mathbb{R}^{N}}e^{-\frac{1}{4}x^{T}C^{\prime}\left(
t\right)  x}dxdt\\
&  =c\left(  q,T\right)  \left\Vert f\right\Vert _{L^{p}\left(  \mathbb{R}%
^{N}\right)  }\int_{0}^{T}\int_{\mathbb{R}^{N}}\frac{e^{-t\operatorname*{Tr}%
B}}{\left(  4\pi\right)  ^{N/2}\sqrt{\det C\left(  t\right)  }}%
e^{-t\operatorname{Tr}B}e^{-\frac{1}{4}x^{T}C^{\prime}\left(  t\right)
x}dxdt\\
x  &  =E\left(  -t\right)  w;dx=e^{t\operatorname{Tr}B}dw\\
&  =c\left(  q,T\right)  \left\Vert f\right\Vert _{L^{p}\left(  \mathbb{R}%
^{N}\right)  }\int_{0}^{T}\int_{\mathbb{R}^{N}}\frac{e^{-t\operatorname*{Tr}%
B}}{\left(  4\pi\right)  ^{N/2}\sqrt{\det C\left(  t\right)  }}e^{-\frac{1}%
{4}\left(  E\left(  -t\right)  w\right)  ^{T}C^{\prime}\left(  t\right)
E\left(  -t\right)  w}dwdt\\
&  =c\left(  q,T\right)  \left\Vert f\right\Vert _{L^{p}\left(  \mathbb{R}%
^{N}\right)  }\int_{0}^{T}\int_{\mathbb{R}^{N}}\Gamma\left(  w,t;0,0\right)
dwdt\\
&  =c\left(  q,T\right)  \left\Vert f\right\Vert _{L^{p}\left(  \mathbb{R}%
^{N}\right)  }\int_{0}^{T}e^{-t\operatorname*{Tr}B}dt\leq c\left(  q,T\right)
\left\Vert f\right\Vert _{L^{p}\left(  \mathbb{R}^{N}\right)  }%
\end{align*}
by (\ref{int Gamma tr}). Hence (\ref{cond uniqueness}) still holds for every
fixed $C,T>0.$

(b) Assume that
\[
\int_{\mathbb{R}^{N}}\left\vert f\left(  y\right)  \right\vert e^{-\alpha
\left\vert y\right\vert ^{2}}dy<\infty
\]
for some $\alpha>0$ and, for $T\in\left(  0,1\right)  ,\beta>0$ to be chosen
later, let us bound:%
\begin{align*}
&  \int_{0}^{T}\left(  \int_{\mathbb{R}^{N}}\left\vert u\left(  x,t\right)
\right\vert e^{-\beta\left\vert x\right\vert ^{2}}dx\right)  dt\\
&  \leq\int_{0}^{T}\left(  \int_{\mathbb{R}^{N}}\left(  \frac
{e^{-t\operatorname*{Tr}B}}{\left(  4\pi\right)  ^{N/2}\sqrt{\det C\left(
t\right)  }}\int_{\mathbb{R}^{N}}e^{-\frac{1}{4}\left(  x-E\left(  t\right)
y\right)  ^{T}C\left(  t\right)  ^{-1}\left(  x-E\left(  t\right)  y\right)
}\left\vert f\left(  y\right)  \right\vert dy\right)  e^{-\beta\left\vert
x\right\vert ^{2}}dx\right)  dt
\end{align*}%
\[
y=E\left(  -t\right)  \left(  x-2C\left(  t\right)  ^{1/2}z\right)
;dy=e^{t\operatorname*{Tr}B}2^{N}\det C\left(  t\right)  ^{1/2}dz
\]%
\[
=\int_{0}^{T}\int_{\mathbb{R}^{N}}\frac{e^{-\left\vert z\right\vert ^{2}}}%
{\pi^{N/2}}\left(  \int_{\mathbb{R}^{N}}\left\vert f\left(  E\left(
-t\right)  \left(  x-2C\left(  t\right)  ^{1/2}z\right)  \right)  \right\vert
e^{-\beta\left\vert x\right\vert ^{2}}dx\right)  dzdt
\]%
\[
E\left(  -t\right)  \left(  x-2C\left(  t\right)  ^{1/2}z\right)
=w;e^{t\operatorname{Tr}B}dx=dw
\]%
\begin{align*}
&  =\int_{0}^{T}\int_{\mathbb{R}^{N}}\frac{e^{-\left\vert z\right\vert ^{2}}%
}{\pi^{N/2}}\left(  \int_{\mathbb{R}^{N}}e^{-t\operatorname{Tr}B}\left\vert
f\left(  w\right)  \right\vert e^{-\beta\left\vert E\left(  t\right)
w+2C\left(  t\right)  ^{1/2}z\right\vert ^{2}}dw\right)  dzdt\\
&  =\int_{0}^{T}e^{-t\operatorname{Tr}B}\int_{\mathbb{R}^{N}}\frac
{e^{-\left\vert z\right\vert ^{2}}}{\pi^{N/2}}\cdot\\
&  \cdot\left(  \int_{\mathbb{R}^{N}}\left\vert f\left(  w\right)  \right\vert
e^{-\beta\left(  \left\vert E\left(  t\right)  w\right\vert ^{2}+4\left\vert
C\left(  t\right)  ^{1/2}z\right\vert ^{2}+2\left(  E\left(  t\right)
w\right)  ^{T}C\left(  t\right)  ^{1/2}z\right)  }dw\right)  dzdt\\
&  =\int_{0}^{T}\frac{e^{-t\operatorname{Tr}B}}{\pi^{N/2}}\left(
\int_{\mathbb{R}^{N}}\left\vert f\left(  w\right)  \right\vert e^{-\beta
\left\vert E\left(  t\right)  w\right\vert ^{2}}\right.  \cdot\\
&  \left.  \cdot\left(  \int_{\mathbb{R}^{N}}e^{-\left\vert z\right\vert ^{2}%
}e^{-4\beta\left\vert C\left(  t\right)  ^{1/2}z\right\vert ^{2}}%
e^{-2\beta\left(  E\left(  t\right)  w\right)  ^{T}C\left(  t\right)  ^{1/2}%
z}dz\right)  dw\right)  dt.
\end{align*}
Next, for $0<t<1$ we have, since $\left\Vert C\left(  t\right)  \right\Vert
\leq ct,$%
\[
\left\vert -2\beta\left(  E\left(  t\right)  w\right)  ^{T}C\left(  t\right)
^{1/2}z\right\vert \leq c_{1}\beta\left\vert w\right\vert \sqrt{t}\left\vert
z\right\vert
\]
so that%
\begin{align*}
&  \int_{0}^{T}\left(  \int_{\mathbb{R}^{N}}\left\vert u\left(  x,t\right)
\right\vert e^{-\beta\left\vert x\right\vert ^{2}}dx\right)  dt\\
&  \leq\frac{e^{\left\vert \operatorname{Tr}B\right\vert }}{\pi^{N/2}}\int%
_{0}^{T}\left(  \int_{\mathbb{R}^{N}}\left\vert f\left(  w\right)  \right\vert
e^{-\beta\left\vert E\left(  t\right)  w\right\vert ^{2}}\left(
\int_{\mathbb{R}^{N}}e^{-\left\vert z\right\vert ^{2}}e^{c_{1}\beta\left\vert
w\right\vert \sqrt{t}\left\vert z\right\vert }dz\right)  dw\right)  dt.
\end{align*}
Next,%
\begin{align*}
\int_{\mathbb{R}^{N}}e^{-\left\vert z\right\vert ^{2}}e^{c_{1}\beta\left\vert
w\right\vert \sqrt{t}\left\vert z\right\vert }dz  &  =c_{n}\int_{0}^{+\infty
}e^{-\rho^{2}+c_{1}\beta\left\vert w\right\vert \sqrt{t}\rho}\rho^{n-1}d\rho\\
&  \leq c\int_{0}^{+\infty}e^{-\frac{\rho^{2}}{2}+c_{1}\beta\rho\sqrt{t}}%
d\rho=ce^{c_{2}\beta^{2}t\left\vert w\right\vert ^{2}}%
\end{align*}
and%
\[
\int_{0}^{T}\left(  \int_{\mathbb{R}^{N}}\left\vert u\left(  x,t\right)
\right\vert e^{-\beta\left\vert x\right\vert ^{2}}dx\right)  dt\leq c\int%
_{0}^{T}\left(  \int_{\mathbb{R}^{N}}\left\vert f\left(  w\right)  \right\vert
e^{-\beta\left\vert E\left(  t\right)  w\right\vert ^{2}}e^{c_{2}\beta
^{2}t\left\vert w\right\vert ^{2}}dw\right)  dt.
\]
Since $E\left(  t\right)  $ is invertible and $E\left(  0\right)  =1$, for $T$
small enough and $t\in(0,T)$ we have $\left\vert E\left(  t\right)
w\right\vert \geq\frac{1}{2}\left\vert w\right\vert $ so that%
\[
e^{-\beta\left\vert E\left(  t\right)  w\right\vert ^{2}}e^{c_{2}\beta
^{2}t\left\vert w\right\vert ^{2}}\leq e^{-\left\vert w\right\vert ^{2}%
\beta\left(  \frac{1}{2}-c_{2}t\beta\right)  }.
\]
We now fix $\beta=4\alpha$ and then fix $T$ small enough such that $\frac
{1}{2}-c_{2}T\beta\geq\frac{1}{4},$ so that for $t\in\left(  0,T\right)  $ we
have%
\[
e^{-\left\vert w\right\vert ^{2}\beta\left(  \frac{1}{2}-c_{2}t\beta\right)
}\leq e^{-\left\vert w\right\vert ^{2}\beta\left(  \frac{1}{2}-c_{2}%
T\beta\right)  }\leq e^{-\alpha\left\vert w\right\vert ^{2}}%
\]
and
\[
\int_{0}^{T}\left(  \int_{\mathbb{R}^{N}}\left\vert u\left(  x,t\right)
\right\vert e^{-\beta\left\vert x\right\vert ^{2}}dx\right)  dt\leq c\int%
_{0}^{T}\left(  \int_{\mathbb{R}^{N}}\left\vert f\left(  w\right)  \right\vert
e^{-\alpha\left\vert w\right\vert ^{2}}dw\right)  dt<\infty.
\]
So we are done.
\end{proof}

The previous uniqueness property for the Cauchy problem also implies the
following replication property for the heat kernel:

\begin{corollary}
\label{Corollary replication}For every $x,y\in\mathbb{R}^{N}$ and $s<\tau<t$
we have%
\[
\Gamma\left(  x,t;y,s\right)  =\int_{\mathbb{R}^{N}}\Gamma\left(
x,t;z,\tau\right)  \Gamma\left(  z,\tau;y,s\right)  dz.
\]

\end{corollary}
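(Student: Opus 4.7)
The plan is to view both sides of the replication formula as solutions to the same Cauchy problem, started at time $\tau$, and to identify them by invoking the uniqueness result of Theorem \ref{Thm uniqueness}. Accordingly, fix $(y,s)\in\mathbb{R}^{N+1}$ and $\tau>s$, and set $f(z):=\Gamma(z,\tau;y,s)$. By Proposition \ref{Prop integral Gamma dx}, $f\in L^{1}(\mathbb{R}^{N})$ with $\|f\|_{L^{1}}=e^{-(\tau-s)\operatorname*{Tr}B}$, so Theorem \ref{Thm Cauchy} applies with $p=1$ to the initial datum $f$ at initial time $\tau$.

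Define, for $t>\tau$,
\[
u_{1}(x,t):=\Gamma(x,t;y,s),\qquad u_{2}(x,t):=\int_{\mathbb{R}^{N}}\Gamma(x,t;z,\tau)f(z)\,dz.
\]
By Theorem \ref{Thm Cauchy} (parts (a) and (i), applied with starting time $\tau$), $u_{2}$ solves $\mathcal{L}u_{2}=0$ in $\mathbb{R}^{N}\times(\tau,\infty)$, satisfies $u_{2}(\cdot,t)\in L^{1}(\mathbb{R}^{N})$ for every $t>\tau$, and $\|u_{2}(\cdot,t)-f\|_{L^{1}}\to 0$ as $t\to\tau^{+}$. On the other hand, $u_{1}$ solves $\mathcal{L}u_{1}=0$ in $\mathbb{R}^{N}\times(\tau,\infty)$ by Theorem \ref{Thm LG=0}, while Proposition \ref{Prop integral Gamma dx} gives $\|u_{1}(\cdot,t)\|_{L^{1}}=e^{-(t-s)\operatorname*{Tr}B}$, so that $u_{1}(\cdot,t)\in L^{1}(\mathbb{R}^{N})$ for every $t>\tau$.

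The main technical point is to check that $u_{1}$ also attains the initial datum $f$ in the $L^{1}$ sense, namely $\|u_{1}(\cdot,t)-f\|_{L^{1}}\to 0$ as $t\to\tau^{+}$. Pointwise a.e.\ convergence $u_{1}(x,t)\to f(x)$ is immediate from the joint continuity of $\Gamma$ on $\mathbb{R}_{\ast}^{2N+2}$ (Theorem \ref{Thm main} (i)), since $\tau>s$. To upgrade to $L^{1}$ convergence I would use dominated convergence: by Theorem \ref{Thm bounds derivatives} (i), applied on the compact time set $\{(t,s):\tau\leq t\leq\tau+1\}$, there exist $C,C'>0$ with
\[
\Gamma(x,t;y,s)\leq C\,e^{-C'|x-E(t-s)y|^{2}}\qquad\text{for }t\in[\tau,\tau+1],
\]
and since $E(t-s)y$ remains in a bounded set, the right-hand side is dominated by a fixed integrable Gaussian $\widetilde{C}e^{-c|x|^{2}}$, uniformly in $t\in[\tau,\tau+1]$. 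This furnishes a dominating function for $|u_{1}(\cdot,t)-f|$ and yields the required $L^{1}$ convergence.

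Finally, both $u_{1}$ and $u_{2}$ satisfy the exponential growth bound (\ref{cond uniqueness}): for $u_{2}$ this is Proposition \ref{Prop cond uniqueness} (a) applied to $f\in L^{1}$; for $u_{1}$ it is even simpler, since for any $T>\tau$ and any $C>0$,
\[
\int_{\tau}^{T}\!\!\int_{\mathbb{R}^{N}}|u_{1}(x,t)|\,e^{-C|x|^{2}}\,dx\,dt\leq\int_{\tau}^{T}\!\!\int_{\mathbb{R}^{N}}\Gamma(x,t;y,s)\,dx\,dt=\int_{\tau}^{T}e^{-(t-s)\operatorname*{Tr}B}\,dt<\infty
\]
by Proposition \ref{Prop integral Gamma dx}. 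Therefore Theorem \ref{Thm uniqueness}, applied to the Cauchy problem on $\mathbb{R}^{N}\times(\tau,T)$ with initial datum $f$, forces $u_{1}\equiv u_{2}$ on $\mathbb{R}^{N}\times(\tau,T)$, and since $T>\tau$ is arbitrary, the replication identity follows by evaluating $u_{1}(x,t)=u_{2}(x,t)$ at the given $(x,t)$ with $t>\tau$.
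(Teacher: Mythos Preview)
Your argument is correct and follows the same overall strategy as the paper: show that both sides solve the same Cauchy problem started at time $\tau$, then invoke Theorem~\ref{Thm uniqueness}. The only difference is the function space in which you place the initial datum $f(z)=\Gamma(z,\tau;y,s)$. You work in $L^{1}$ and apply Theorem~\ref{Thm Cauchy}~(i); the paper instead observes that $f\in C^{0}_{\ast}(\mathbb{R}^{N})$ (continuous and vanishing at infinity, by Theorem~\ref{Thm main}~(i)) and uses Theorem~\ref{Thm Cauchy}~(iii). The advantage of the paper's choice is that verifying the initial condition for $u_{1}(x,t)=\Gamma(x,t;y,s)$ becomes immediate: pointwise convergence $u_{1}(x,t)\to f(x_{0})$ as $(x,t)\to(x_{0},\tau^{+})$ is just joint continuity of $\Gamma$, with no need for your dominated-convergence step to upgrade to $L^{1}$. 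Your route is sound but does a little extra work that the $C^{0}_{\ast}$ framework sidesteps.
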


\begin{proof}
Let%
\begin{align*}
u\left(  x,t\right)   &  =\int_{\mathbb{R}^{N}}\Gamma\left(  x,t;z,\tau
\right)  \Gamma\left(  z,\tau;y,s\right)  dz\\
f\left(  z\right)   &  =\Gamma\left(  z,\tau;y,s\right)
\end{align*}
for $y\in\mathbb{R}^{N}$ fixed, $\tau>s$ fixed. By Theorem \ref{Thm main},
(i), $f\in C_{\ast}^{0}\left(  \mathbb{R}^{N}\right)  $. Hence by Theorem
\ref{Thm Cauchy}, point (iii), $u$ solves the Cauchy problem%
\[
\left\{
\begin{array}
[c]{l}%
\mathcal{L}u\left(  x,t\right)  =0\text{ for }t>\tau\\
u\left(  x,\tau\right)  =\Gamma\left(  x,\tau;y,s\right)
\end{array}
\right.
\]
where the initial datum is assumed continuously, uniformly as $t\rightarrow
\tau$. Since $v\left(  x,t\right)  =\Gamma\left(  x,t;y,s\right)  $ solves the
same Cauchy problem, by Theorem \ref{Thm uniqueness} the assertion follows.
\end{proof}

\bigskip

\bigskip

\textsc{Marco Bramanti: Dipartimento di Matematica, Politecnico di Milano, Via
Bonardi 9, I-20133 Milano, Italy.}

marco.bramanti@polimi.it

\bigskip

\textsc{Sergio Polidoro: Dipartimento di Scienze Fisiche, Informatiche e
Matematiche, Universita degli Studi di Modena e Reggio Emilia. Via Campi
213/b, I-41121 Modena, Italy.}

sergio.polidoro@unimore.it

\end{document}